\newtheorem{theorem}{Theorem}[section]
\newtheorem{lemma}[theorem]{Lemma}
\newtheorem{proposition}[theorem]{Proposition}
\newcommand{\imod}[1]{\allowbreak\mkern4mu({\operator@font mod}\,\,#1)}
\newcommand{\la}{\langle}
\newcommand{\ra}{\rangle}
\newcommand{\ep}{\epsilon}
\newcommand{\Z}{\mathbb{Z}}
\newcommand{\Centralizer}{{\mathbf {C}}}
\newcommand{\Normalizer}{{\mathbf {N}}}
\newcommand{\SL}{{\mathrm {SL}}}
\newcommand{\SU}{{\mathrm {SU}}}
\newcommand{\Alt}{\mathrm{A}}
\newcommand{\SSS}{\mathrm{S}}
\newcommand{\Sym}{\mathrm{Sym}}
\newcommand{\F}{\mathbb{F}}
\newcommand{\M}{\mathcal{M}}
\newcommand{\Ed}{\mathcal{E}}
\newcommand{\normeq}{\trianglelefteqslant}
\newcommand{\OO}{\mathbf{O}}
\newtheorem{Theorem}{Theorem}
\newtheorem{Remark}[Theorem]{Remark}
\renewcommand{\a}{\alpha}
\newcommand{\e}{\epsilon}
 \renewcommand{\to}{\rightarrow}
\newcommand{\leqs}{\leqslant}
\newcommand{\geqs}{\geqslant}
 \newcommand{\vs}{\vspace{2mm}}
\renewcommand{\la}{\langle}
\renewcommand{\ra}{\rangle}
\newtheorem{rem}[theorem]{Remark}
\begin{document}
\title[Derangements of prime power order]{Primitive permutation groups and derangements of prime power order}

\author{Timothy C. Burness}
\email{t.burness@bristol.ac.uk }
\address{T.C. Burness, School of Mathematics, University of Bristol, Bristol BS8 1TW, UK}

\author{Hung P. Tong-Viet}
\email{htongvie@kent.edu}
\address{H.P. Tong-Viet, Department of Mathematical Sciences, Kent State University, Kent, Ohio 44242, USA} 


\subjclass[2010]{Primary 20B15; secondary 20D05}

\date{\today}

\begin{abstract}
Let $G$ be a transitive permutation group on a finite set of size at least $2$. By a well known theorem of Fein, Kantor and Schacher, $G$ contains a derangement of prime power order. In this paper, we study the finite primitive permutation groups with the extremal property that the order of every derangement is an $r$-power, for some fixed prime $r$. First we show that these groups are either almost simple or affine, and we determine all the almost simple groups with this property. We also prove that an affine group $G$ has this property if and only if every two-point stabilizer is an $r$-group. Here the structure of $G$ has been extensively studied in work of Guralnick and Wiegand on the multiplicative structure of Galois field extensions, and in later work of Fleischmann, Lempken and Tiep on $r'$-semiregular pairs.
\end{abstract}

\maketitle

\section{Introduction}\label{s:intro}

Let $G$ be a transitive permutation group on a finite set $\Omega$ of size at least $2$. An element $x \in G$ is a \emph{derangement} if it acts fixed-point-freely on $\Omega$. An easy application of the orbit-counting lemma shows that $G$ contains derangements (this is originally a classical theorem of Jordan \cite{Jordan}), and we will write $\Delta(G)$ for the set of derangements in $G$. Note that if $H$ is a point stabilizer, then $x$ is a derangement if and only if $x^G \cap H$ is empty, where $x^G$ denotes the conjugacy class of $x$ in $G$, so we have 
\begin{equation}\label{e:delta}
\Delta(G) = G \setminus \bigcup_{g \in G}H^g.
\end{equation}
The existence of derangements in transitive permutation groups has interesting applications 
in number theory and topology (see Serre's article \cite{Serre}, for example).

Various extensions of Jordan's theorem on the existence of derangements have been studied in recent years. For example, if $\delta(G) = |\Delta(G)|/|G|$ denotes the proportion of derangements in $G$, then a theorem of Cameron and Cohen \cite{CC} states that $\delta(G) \geqs |\Omega|^{-1}$, with equality if and only if $G$ is sharply $2$-transitive. More recently,  Fulman and Guralnick have established the existence of an absolute constant $\e>0$ such that $\delta(G)>\e$ for any simple transitive group $G$ (see \cite{FG1,FG2,FG3,FG4}). This latter result confirms a conjecture of Boston et al. \cite{Boston} and Shalev. 

The study of derangements with special properties has been another major theme in recent years.  
By a theorem of Fein et al. \cite{FKS}, $\Delta(G)$ contains an element of prime power order (their proof requires the classification of finite simple groups), and this result has important number-theoretic applications. For instance, it implies that the relative Brauer group of any finite extension of global fields is infinite. In most cases, $\Delta(G)$ contains an element of prime order, but there are some exceptions, such as the $3$-transitive action of the smallest Mathieu group ${\rm M}_{11}$ on $12$ points. The transitive permutation groups with this property are called \emph{elusive} groups, and they have been investigated by many authors; see \cite{CGJKKMN,Giudici,GiuKel}, for example.

In this paper, we are interested in the permutation groups with the special property that \emph{every} derangement is an $r$-element (that is, has order a power of $r$) for some fixed prime $r$. 
One of our main motivations stems from a theorem of Isaacs et al. \cite{IKLM}, which describes the finite transitive groups in which every derangement is an involution; by \cite[Theorem A]{IKLM}, such a group is either an elementary abelian $2$-group, or a Frobenius group with kernel an elementary abelian $2$-group. In \cite{BDS}, this result is used to classify the finite groups whose irreducible characters vanish only on involutions. It is natural to consider the analogous problem for odd primes, and more generally for prime powers. As noted in \cite{IKLM}, it is easy to see that such a generalization will involve a wider range of examples. For instance, if $p$ is an odd prime then every derangement in the 
affine group ${\rm ASL}_{2}(p) = {\rm SL}_{2}(p){:}p^2$ (of degree $p^2$) has order $p$ (if $p=2$, the derangements have order $2$ or $4$).

\vs

Our first result is a reduction theorem. 

\begin{Theorem}\label{t:main1}
Let $G$ be a finite primitive permutation group such that every derangement in $G$ is an $r$-element for some fixed prime $r$. Then $G$ is either almost simple or affine.
\end{Theorem}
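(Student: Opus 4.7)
My approach is via the O'Nan--Scott theorem for finite primitive permutation groups. After setting aside the affine and almost simple types (the desired conclusion), the remaining primitive groups split into four families, each with socle of the form $N = T^k$ for some nonabelian finite simple group $T$ and some $k \geq 2$, characterised by the structure of $H \cap N$, where $H$ denotes a point stabilizer: twisted wreath (TW), with $H \cap N = 1$; simple diagonal (SD), with $H \cap N$ the full diagonal $\{(s,\ldots,s): s \in T\}$; compound diagonal (CD), with $H \cap N$ a product of $m$ diagonal copies of $T$ supported on the blocks of a partition of $\{1,\ldots,k\}$ into $m$ blocks of size $\ell \geq 2$; and product action (PA), with $\Omega = \Delta^k$ and $H \cap N = (T_\omega)^k$, where $T_\omega$ is the stabilizer in $T$ of a base point $\omega \in \Delta$ coming from a primitive almost simple action of some $L$ with socle $T$. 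By Burnside's $p^a q^b$-theorem, $|T|$ is divisible by at least three distinct primes. The plan is to exhibit, in each of these four types, derangements of $G$ whose orders involve different primes, contradicting the standing hypothesis that every derangement is an $r$-element.

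In the TW case every nonidentity element of $N$ is already a derangement, and $N$ contains elements of different prime orders, so we are done immediately. For SD and CD, I would consider elements $x = (t,1,\ldots,1) \in N$ with $t \in T \setminus \{1\}$. The conjugation action of $G$ on $N$ combines coordinate permutations with factorwise automorphisms of $T$, so each $G$-conjugate of $x$ in $N$ has exactly one nontrivial coordinate. Such an element cannot lie in $H \cap N$: in SD the diagonal forces all coordinates to coincide, and in CD the block containing the nontrivial coordinate still has at least one trivial coordinate (since $\ell \geq 2$). Hence $x$ is a derangement of order $|t|$, and letting $t$ range over elements of distinct prime orders in $T$ produces derangements of distinct prime orders.

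The last and most delicate case is PA. Here an element $(t_1,\ldots,t_k) \in N$ fixes $(\omega_1',\ldots,\omega_k') \in \Omega$ iff $t_i \omega_i' = \omega_i'$ for every $i$, so $(t_1,\ldots,t_k)$ is a derangement of $G$ iff at least one $t_i$ is a derangement of $T$ on $\Delta$. Since $T$ is a nontrivial normal subgroup of the primitive group $L$ acting on $\Delta$, $T$ is transitive on $\Delta$, and by Fein--Kantor--Schacher \cite{FKS} there is a derangement $t \in T$ on $\Delta$ of some prime power order $r^a$. Choosing a prime $q \neq r$ dividing $|T|$ (which exists by Burnside) and an element $s \in T$ of order $q$, the element $(t,s,1,\ldots,1) \in N \leq G$ is a derangement of $G$ of order $r^a q$, which is not a prime power, contradicting the hypothesis.

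The principal obstacle is the PA case: the point stabilizer intersects the socle in the relatively large subgroup $(T_\omega)^k$, so the single-nontrivial-coordinate construction used in SD and CD does not by itself produce a derangement. The key trick is to pair a derangement of $T$ on $\Delta$ in one coordinate with an element of coprime order in another coordinate, producing a derangement of $G$ of mixed prime order.
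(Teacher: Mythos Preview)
Your proof is correct, but the route differs from the paper's. Rather than invoking the full O'Nan--Scott case analysis, the paper argues directly from a minimal normal subgroup $N\cong S^k$: when $H\cap N\neq 1$ and $k\geqslant 2$, it chooses a maximal subgroup $T$ of $N$ containing $H\cap N$, finds two simple factors $L=S_1\times S_2\not\leqslant T$, and uses the elementary inclusion $\Delta_K(L)\subseteq\Delta_T(N)\subseteq\Delta(G)$ (their Lemma~\ref{lem:normal}) where $K$ is maximal in $L$ above $L\cap T$. Th\'evenaz's classification of maximal subgroups of a direct product of two simple groups then reduces everything to two subcases (diagonal and standard), which are dispatched with the same ideas you use---the single-nontrivial-coordinate trick for diagonal, and the FKS derangement paired with an element of coprime prime order for standard. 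The paper's reduction to two factors avoids separately treating SD, CD, and PA, and sidesteps any need to sort out the finer O'Nan--Scott distinctions (HS/HC versus SD/CD); your approach, on the other hand, is more transparent about exactly which structural feature of each primitive type is being exploited, and makes the PA case---where the paper's ``standard'' subcase lands---fully explicit.
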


Our next result, Theorem \ref{t:main2} below, describes all the almost simple primitive groups that arise in Theorem \ref{t:main1}. Notice that in Table \ref{tab:main}, we write ${\rm P}_{1}$ for a maximal parabolic subgroup of ${\rm L}_{2}(q)$ or ${\rm L}_{3}(q)$, which can be defined as the stabilizer of a $1$-dimensional subspace of the natural module (similarly, ${\rm P}_{2}$ is the stabilizer of a $2$-dimensional subspace). In addition, we define
$$\Ed(G) = \{|x| \,:\, x \in \Delta(G)\}.$$

\begin{Theorem}\label{t:main2}
Let $G$ be a finite almost simple primitive permutation group with point stabilizer $H$. Then every derangement in $G$ is an $r$-element for some fixed prime $r$ if and only if $(G,H,r)$ is one of the cases in Table \ref{tab:main}. In particular, every derangement has order $r$ if and only if $|\mathcal{E}(G)|=1$.
\end{Theorem}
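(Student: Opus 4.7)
The plan is to reduce the problem via the classification of finite simple groups. Let $G$ be almost simple with socle $G_0$ and $H$ a maximal subgroup of $G$ serving as the point stabilizer, and set $H_0 = H \cap G_0$. The key reformulation of the hypothesis is that every element of $G$ whose order is \emph{not} a power of $r$ must be $G$-conjugate to an element of $H$. In particular, for every prime $p \neq r$ dividing $|G|$, all elements of $G$ of order $p$ lie in some $G$-conjugate of $H$, so $p \mid |H|$. This yields the highly restrictive numerical condition
$$\pi(G)\setminus\pi(H)\;\subseteq\;\{r\},$$
on the pair $(G,H)$, where $\pi(\cdot)$ denotes the set of prime divisors of the order.

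Next I would use this prime condition to cut down the possibilities for $(G_0,H_0,r)$ to a finite and tractable list, treating each family of socles in turn. For $G_0=\Alt_n$ one invokes the classification of primitive groups whose index has few prime divisors, combined with direct analysis of the natural and other low-degree primitive actions. For $G_0$ of Lie type, Zsigmondy/primitive prime divisor arguments applied to $|G_0|$, together with the Aschbacher--Liebeck--Seitz description of maximal subgroups, force $H$ to be of a very specific shape (typically a maximal parabolic, a subfield subgroup, or a torus normalizer). For sporadic $G_0$, the condition can be checked directly from the \textsc{Atlas}. At the end of this reduction one is left with a short list of candidate triples.

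The third step is to promote this \emph{necessary} prime condition into the full \emph{sufficient} condition: for each surviving candidate, I would verify that every cyclic subgroup of $G$ of non-$r$-power order actually meets $H$ up to conjugacy, and not merely that the relevant primes divide $|H|$. For low-rank Lie type socles, especially $\PSL_2(q)$, $\PSL_3(q)$ and $\PSU_3(q)$, this exploits the explicit torus/Singer cycle structure and the known fusion of semisimple classes into parabolic and field-defined subgroups; in parabolic cases the derangements are precisely certain unipotent elements, which explains why $r$ matches the defining characteristic in Table~\ref{tab:main}. For the remaining almost simple candidates one reads the required class-fusion information from character tables. Candidates in which some non-$r$-power class fails to meet $H$ are eliminated, leaving exactly the entries of Table~\ref{tab:main}.

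Finally, for the ``in particular'' assertion, the forward direction is trivial. For the converse, if $|\mathcal{E}(G)|=1$ then all derangements share a common prime-power order $r^k$, and I would rule out $k\geqs 2$ by exhibiting an explicit derangement of order $r$ in each relevant row of Table~\ref{tab:main}. The main obstacle is Step~3: the prime condition alone admits several parametric families (particularly when $G_0$ is a low-rank classical group), and the sharp boundary between configurations producing an entry in the table and those forced to admit a non-$r$-power derangement is driven by subtle arithmetic conditions on $q$ and $r$ (congruences modulo small primes, existence of Zsigmondy primes, etc.). Handling these uniformly, rather than case-by-case for each value of $q$, is the technical heart of the argument.
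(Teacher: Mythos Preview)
Your high-level strategy---reduce via the prime condition $\pi(G)\setminus\pi(H)\subseteq\{r\}$, then case-split on the socle using CFSG, then verify class fusion for the surviving candidates---is exactly what the paper does, and your identification of Step~3 as the technical heart is accurate. The paper's execution relies heavily on identifying specific elements (often generators of carefully chosen maximal tori, or elements whose order involves particular primitive prime divisors) and invoking classification results on their maximal overgroups (Bereczky on Singer elements, Guralnick--Penttila--Praeger--Saxl, etc.) to pin down $H$; you gesture at this but underestimate how much of the work it carries.

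There is, however, a concrete error in your description of Step~3 that would derail the analysis. You write that ``in parabolic cases the derangements are precisely certain unipotent elements, which explains why $r$ matches the defining characteristic in Table~\ref{tab:main}.'' This is backwards. In the parabolic actions appearing in the table, unipotent elements lie in the Borel (hence in $H$) and therefore \emph{have} fixed points; the derangements are the semisimple elements lying in the ``wrong'' maximal tori. For instance, when $G_0=\mathrm{L}_2(q)$ acts on $1$-spaces with $H=\mathrm{P}_1$, the derangements are precisely the nontrivial elements of the non-split torus of order $(q+1)/(2,q-1)$, and the condition in the table is that $(q+1)/(2,q-1)=r^e$; the prime $r$ is a divisor of $q+1$, not the defining characteristic $p$. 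Likewise for $\mathrm{L}_3(q)$ on $\mathrm{P}_1$, the derangements are Singer-type elements of order dividing $(q^2+q+1)/(3,q-1)$, and $r$ is a primitive prime divisor of $q^3-1$. Scanning Table~\ref{tab:main}, $r$ essentially never equals the defining characteristic in the parabolic rows. Getting this right is essential, because the delicate arithmetic conditions on $q$ (Fermat/Mersenne primality, the Nagell-type constraint $q^2+q+1\in\{r,3r,3r^2\}$) arise precisely from forcing the relevant non-split torus order to be an $r$-power, and your misidentification of the derangements would lead you to look at the wrong arithmetic entirely.
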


\begin{Remark}\label{r:isom}
\emph{Let us make a couple of comments on the cases arising in Table \ref{tab:main}.
\begin{itemize}\addtolength{\itemsep}{0.2\baselineskip}
\item[{\rm (i)}] Firstly, notice that the group $G$ is recorded up to isomorphism. For example, the case 
$(G,H)=(\Alt_6, (\SSS_3 \wr \SSS_2) \cap \Alt_6)$ is listed as $({\rm L}_{2}(9),{\rm P}_{1})$, 
$(G,H) = (\Alt_5,\Alt_4)$ appears as $({\rm L}_{2}(4),{\rm P}_{1})$, and we record $(G,H) = ({\rm L}_{2}(7),\SSS_4)$ as $({\rm L}_{3}(2), {\rm P}_{1})$, etc.
\item[{\rm (ii)}] In the first two rows of the table we have $G = {\rm L}_{3}(q)$ and $H = {\rm P}_{1}$ or ${\rm P}_{2}$. Here $q^2+q+1 \in \{r,3r,3r^2\}$, which implies that either $q=4$, or $q=p^f$ for a prime $p$ and $f$ is a $3$-power (see Lemma \ref{l:nagell}).
\end{itemize}}
\end{Remark}

\renewcommand{\arraystretch}{1.1}
\begin{table}[h]
$$\begin{array}{lllll} \hline
G & H & r & \Ed(G) & \mbox{Conditions} \\ \hline
{\rm L}_{3}(q) & {\rm P}_1, {\rm P}_{2} & r & r & q^2+q+1 = (3,q-1)r \\
& & r & r, r^2 & q^2+q+1 = 3r^2 \\
{\rm \Gamma L}_2(q) & \Normalizer_{G}({\rm D}_{2(q+1)}) & r & r & \mbox{$r=q-1$ Mersenne prime} \\
{\rm \Gamma L}_{2}(8) & \Normalizer_{G}({\rm P}_1), \Normalizer_{G}({\rm D}_{14}) & 3 & 3,9 & \\
{\rm PGL}_{2}(q) & \Normalizer_{G}({\rm P}_{1}) & 2 & 2^i, \, 1 \leqs i \leqs e+1 & \mbox{$q=2^{e+1}-1$ Mersenne prime} \\
{\rm L}_2(q) & {\rm P}_1 & r & r^i, \, 1 \leqs i \leqs e & q=2r^e-1 \\
& {\rm P}_1, {\rm D}_{2(q-1)} & r & r & \mbox{$r=q+1$ Fermat prime} \\
& {\rm D}_{2(q+1)} & r & r & \mbox{$r=q-1$ Mersenne prime} \\
{\rm L}_{2}(8) & {\rm P}_{1}, {\rm D}_{14} & 3 & 3,9 & \\
{\rm M}_{11} & {\rm L}_{2}(11) & 2 & 4,8 & \\
\hline
\end{array}$$
\caption{The cases $(G,H,r)$ in Theorem \ref{t:main2}}
\label{tab:main}
\end{table}
\renewcommand{\arraystretch}{1}

Now let us turn our attention to the affine groups that arise in Theorem \ref{t:main1}. In order to state Theorem \ref{t:main3} below, we need to introduce some additional terminology. Let $\F$ be a field and let $V$ be a finite dimensional vector space over $\F$. Let $H \leqs {\rm GL}(V)$ be a finite group and let $r$ be a prime. Recall that $x \in H$ is an \emph{$r'$-element} if the order of $x$ is indivisible by $r$. Following Fleischmann et al. \cite{FLT}, the pair $(H,V)$ is said to be \emph{$r'$-semiregular} if every nontrivial $r'$-element of $H$ has no fixed points on $V\setminus\{0\}$ (equivalently, no nontrivial $r'$-element of $H$ has eigenvalue $1$ on $V$).

\begin{Theorem}\label{t:main3}
Let $G = HV \leqs {\rm AGL}(V)$ be a finite affine primitive permutation group with point stabilizer $H = G_0$ and socle $V = (\mathbb{Z}_{p})^k$, where $p$ is a prime and $k \geqs 1$. Then every derangement in $G$ is an $r$-element for some fixed prime $r$ if and only if $r=p$ and the pair $(H,V)$ is $r'$-semiregular.
\end{Theorem}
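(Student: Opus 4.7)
My plan is to work throughout with the concrete semidirect-product presentation $G = V \rtimes H$, writing elements as pairs $(b,A)$ with $b \in V$ and $A \in H$ and acting on $V$ via $u \mapsto uA + b$. Two elementary identities will do most of the work. First, $(b,A)$ has a fixed point in $V$ if and only if the equation $u(1-A) = b$ is solvable, i.e.\ $b \in \mathrm{im}(1-A)$; hence the derangements are exactly the pairs $(b,A)$ with $b \notin \mathrm{im}(1-A)$. Second, if $A$ has order $m$ then a short induction gives $(b,A)^m = (b\cdot S_A, 1)$ where $S_A = 1 + A + \cdots + A^{m-1}$, so the order of $(b,A)$ equals $m$ when $bS_A = 0$ and $mp$ otherwise, since $V$ has exponent $p$.

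For the forward direction, I would first apply the criterion to the translations $(b,1)$ with $b \neq 0$: these are derangements of order $p$, so the prime $r$ in the hypothesis must equal $p$. To establish $p'$-semiregularity I would take any nontrivial $p'$-element $A \in H$ and argue by contradiction. If $A$ had a nonzero fixed vector then $\kernel(1-A) \neq 0$, so $\mathrm{im}(1-A) \neq V$, and one could pick $b \notin \mathrm{im}(1-A)$. The resulting derangement $(b,A)$ has order $m$ or $mp$, both divisible by $m = |A| > 1$, which is coprime to $p$; this contradicts the hypothesis that every derangement is a $p$-element. Hence no nontrivial $p'$-element of $H$ has a nonzero fixed vector, i.e.\ $(H,V)$ is $p'$-semiregular.

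For the converse, I would assume $r = p$ and $(H,V)$ is $p'$-semiregular, and let $g = (b,A)$ be an arbitrary derangement. If $A = 1$ then $b \neq 0$ and $|g| = p$. Otherwise I use the primary decomposition $A = A_p A_{p'}$ inside the cyclic group $\la A \ra$, so that $A_p$ and $A_{p'}$ are commuting powers of $A$ of $p$-power and $p'$-order respectively. I claim $A_{p'} = 1$: any $v \in \kernel(1-A)$ satisfies $vA^k = v$ for every $k$, hence $vA_{p'} = v$, and if $A_{p'} \neq 1$ then $p'$-semiregularity forces $v = 0$. Thus $1-A$ is injective, hence bijective on the finite space $V$, contradicting $b \notin \mathrm{im}(1-A)$. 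Therefore $A$ is a $p$-element, so $|g|$ divides $p|A|$ and is a $p$-power.

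The one step that requires genuine thought is the forward direction, where a nontrivial $p'$-element with a nonzero fixed vector must be converted into a concrete derangement of non-$p$-power order; this is the substantive content and it rests entirely on the two identities above. The remainder is routine linear algebra in $V$ together with elementary group theory inside the cyclic subgroup $\la A \ra$, so I do not anticipate any further obstacle.
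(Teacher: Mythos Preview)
Your argument is correct. Both directions go through as you describe: the fixed-point criterion $b\in\operatorname{im}(1-A)$ is exactly the description of derangements in this action, and in the converse direction the primary decomposition $A=A_pA_{p'}$ together with $p'$-semiregularity forces $\ker(1-A)=0$, hence $\operatorname{im}(1-A)=V$, contradicting the derangement condition unless $A_{p'}=1$.

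Your route differs from the paper's. The paper first proves a general equivalence (Lemma~\ref{lem:equivalence}) for an arbitrary semidirect product $G=H\ltimes N$ with $N$ an $r$-group: property~\eqref{e:star} holds if and only if $\Centralizer_H(n)$ is an $r$-group for every $n\in N^*$, if and only if $\Centralizer_N(x)=1$ for every nontrivial $r'$-element $x\in H$. The proof of that lemma uses Sylow's theorem and the centralizer factorization $\Centralizer_G(x)=\Centralizer_H(x)\Centralizer_N(x)$ (Lemma~\ref{lem:general facts}(i)), conjugating a putative bad prime-order power of a derangement into $H$ and deriving a contradiction. Theorem~\ref{t:main3} is then an immediate specialization. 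Your argument instead stays inside the affine presentation and uses only elementary linear algebra in $V$ together with the cyclic-group decomposition of $A$; in particular you never invoke Sylow or the abstract centralizer identities. What your approach buys is directness and self-containment for the affine case; what the paper's approach buys is a statement (Lemma~\ref{lem:equivalence}) valid for any semidirect product with normal $r$-subgroup, not only for elementary abelian $N$, and the intermediate characterization in terms of two-point stabilizers $\Centralizer_H(n)=H\cap H^n$ (see Remark~\ref{r:affine0}).
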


Let $G=HV$ be an affine group as in Theorem \ref{t:main3} and notice that 
$(H,V)$ is $r'$-semiregular if and only if every two-point stabilizer in $G$ is an $r$-group. As a special case, observe that if $G$ is a Frobenius group then every two-point stabilizer is trivial and it is clear that every derangement in $G$ has order $r$. Therefore, it is natural to focus our attention on the non-Frobenius affine groups arising in Theorem \ref{t:main3}, which correspond to $r'$-semiregular pairs $(H,V)$ such that $r$ divides $|H|$. In this situation, Guralnick and Wiegand \cite[Section 4]{GW} obtain detailed information on the structure of $H$, which they use to investigate the multiplicative structure of finite Galois field extensions. Similar results were established in later work of Fleischmann et al. \cite{FLT}. We refer the reader to the end of Section \ref{s:affine} for further details (see Propositions \ref{p:flt1} and \ref{p:flt2}).

\vs

Transitive groups with the property in Theorem \ref{t:main1} arise naturally in several different contexts. For instance, let us recall that the existence of a derangement of prime power order in any finite transitive permutation group implies that the relative Brauer group $B(L/K)$ of any finite extension $L/K$ of global fields is infinite. More precisely, let $L = K(\a)$ be a separable extension of $K$, let $E$ be a Galois closure of $L$ over $K$, and let $\Omega$ be the set of roots in $E$ of the minimal polynomial of $\a$ over $K$. Then the $r$-primary component $B(L/K)_r$ is infinite if and only if the Galois group ${\rm Gal}(E/K)$ contains a derangement of $r$-power order on $\Omega$ (see \cite[Corollary 3]{FKS}). In this situation, it follows that the relative Brauer group $B(L/K)$ has a unique infinite primary component if and only if every derangement in ${\rm Gal}(E/K)$ is an $r$-element for some fixed prime $r$.

In a different direction, our property arises in the study of permutation groups with \emph{bounded movement}. To see the connection, let $G \leqs {\rm Sym}(\Omega)$ be a transitive permutation group of degree $n$ and set 
$$m = \max\{|\Gamma^x \setminus \Gamma| \, :\, \Gamma \subseteq \Omega,\, x \in G\} \in \mathbb{N},$$
where $\Gamma^x = \{\gamma^x \,:\, \gamma \in \Gamma\}$. Following Praeger \cite{Praeger}, we say that $G$ has \emph{movement} $m$. If $G$ is not a $2$-group and $n = \lfloor 2mp/(p-1) \rfloor$, where $p\geqs 5$ is the least odd prime dividing $|G|$, then $p$ divides $n$ and every derangement in $G$ has order $p$ (see \cite[Proposition 4.4]{HKKP}). Moreover, the structure of these groups is described in \cite[Theorem 1.2]{HKKP}. 

Some additional related results are established by Mann and Praeger in \cite{MP}. For instance, \cite[Proposition 2]{MP} states that if $G$ is a transitive $p$-group, where $p=2$ or $3$, then every derangement in $G$ has order $p$ only if $G$ has exponent $p$. It is still not known whether or not the same conclusion holds for \emph{any} prime $p$ (see \cite[p.905]{MP}), although \cite[Proposition 6.1]{HKKP} does show that the exponent of such a group is bounded in terms of $p$ only. 

\begin{Remark}\label{r:prime}
\emph{Let $G = HV \leqs {\rm AGL}(V)$ be a finite affine primitive permutation group as above, and assume that every derangement in $G$ is an $r$-element for some fixed prime $r$. Let $P$ be a Sylow $r$-subgroup of $G$ and set $K = H \cap P$. As explained in Proposition \ref{p:prime}, $P$ is a transitive permutation group on $P/K$ such that $\mathcal{E}(G) = \mathcal{E}(P)$, so $\mathcal{E}(G) = \{r\}$ if and only if $\mathcal{E}(P) = \{r\}$, and we will show that $\mathcal{E}(P) = \{r\}$ if and only if $P$ has exponent $r$ (see Theorem \ref{c:prime}).}
\end{Remark}

There is also a connection between our property and $2$-coverings of abstract groups. First notice that Jordan's theorem on the existence of derangements is equivalent to the well known fact that no finite group $G$ can be expressed as the union of $G$-conjugates of a proper subgroup (see \eqref{e:delta}). However, it may be possible to express $G$ as the union of the $G$-conjugates of two proper subgroups; if $H$ and $K$ are proper subgroups such that 
$$G = \bigcup_{g \in G}H^g \cup \bigcup_{g \in G}K^g,$$
then $G$ is said to be \emph{$2$-coverable} and the pair $(H,K)$ is a \emph{$2$-covering} for $G$. This notion has been widely studied in the context of finite simple groups. For instance, Bubboloni \cite{B} proves that $\Alt_n$ is $2$-coverable if and only if $5 \leqs n \leqs 8$, and similarly ${\rm L}_{n}(q)$ is $2$-coverable if and only if $2 \leqs n \leqs 4$ (see \cite{BL}). We refer the reader to \cite{BLW} and \cite{Pellegrini} for further results in this direction. The connection between $2$-coverable groups and the property in Theorem \ref{t:main1} is transparent. Indeed, if $G$ is a transitive permutation group with point stabilizer $H$, then every derangement in $G$ is an $r$-element (for some fixed prime $r$) if and only if $(H,K)$ is a $2$-covering for $G$, where $K$ is a Sylow $r$-subgroup of $G$. 

\vs

Finally, some words on the organisation of this paper. In Section \ref{s:prel} we record several preliminary results that we will need in the proofs of our main theorems. The proof of Theorem \ref{t:main1} is given in Section \ref{s:red}, and the almost simple groups are handled in Section \ref{s:as}, where we prove Theorem \ref{t:main2}. Finally, in Section \ref{s:affine} we turn to affine groups and we establish Theorem \ref{t:main3}.

\vs

\noindent \emph{Notation.} Our group-theoretic notation is standard, and we adopt the notation of Kleidman and Liebeck \cite{KL} for simple groups. For instance, 
$${\rm PSL}_{n}(q) = {\rm L}_{n}^{+}(q) = {\rm L}_{n}(q),\;\; {\rm PSU}_{n}(q) = {\rm L}_{n}^{-}(q) = {\rm U}_{n}(q).$$
If $G$ is a simple orthogonal group, then we write $G = {\rm P\Omega}_{n}^{\e}(q)$, where $\e=+$ (respectively $-$) if $n$ is even and $G$ has Witt defect $0$ (respectively $1$), and $\e=\circ$ if $n$ is odd (in the latter case, we also write $G = \Omega_n(q)$). Following \cite{KL}, we will sometimes refer to the \emph{type} of a subgroup $H$, which provides an approximate description of the group-theoretic structure of $H$. 

For integers $a$ and $b$, we use $(a,b)$ to denote the greatest common divisor  
of $a$ and $b$. If $p$ is a prime number, then we write $a=a_p \cdot a_{p'}$, where $a_p$ is the largest power of $p$ dividing $a$. Finally, if $X$ is a finite set, then $\pi(X)$ denotes the set of prime divisors of $|X|$.

\vs

\noindent \emph{Acknowledgements.} This work was done while the second author held a position at the CRC 701 within the project C13 `The geometry and combinatorics of groups', and he thanks B. Baumeister and G. Stroth for their assistance. Part of the paper was written during the second author's visit to the School of Mathematics at the University of Bristol and he thanks the University of Bristol for its hospitality. Burness thanks R. Guralnick for helpful comments. Both authors thank an anonymous referee for suggesting several improvements to the paper, including a simplified proof of Proposition \ref{p:alt} and a proof of Theorem  \ref{c:prime}.

\section{Preliminaries}\label{s:prel}

In this section we record several preliminary results that will be useful in the proofs of our main theorems.  Let $H$ be a proper subgroup of a finite group $G$ and set
$$\Delta_H(G) = G \setminus \bigcup_{g \in G}H^g.$$ 
Notice that if $G$ is a transitive permutation group with point stabilizer $H$, then $\Delta(G)=\Delta_H(G)$ is the set of derangements in $G$ (see \eqref{e:delta}). 

It will be convenient to define the following property:
\[\mbox{\emph{Every element in $\Delta_H(G)$ is an $r$-element for some fixed prime $r$.} \label{e:star} \tag{$\star$}}\] 

\begin{lemma}\label{lem:power order}
Let $H$ be a proper subgroup of a finite group $G$. If \eqref{e:star} holds, then 
\begin{itemize}\addtolength{\itemsep}{0.2\baselineskip}
\item[{\rm (i)}] $\pi(G) = \pi(H) \cup \{r\}$; and
\item[{\rm (ii)}] $\Centralizer_G(x)$ is an $r$-group for every $x \in\Delta_H(G)$.
\end{itemize}
\end{lemma}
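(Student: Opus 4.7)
The plan is to handle the two parts separately, each with a short direct argument that exploits the Jordan decomposition of an element into its $r$-part and $r'$-part against the hypothesis \eqref{e:star}.

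For (i), the inclusion $\pi(H) \subseteq \pi(G)$ is immediate. To see that $r \in \pi(G)$, I would apply Jordan's theorem to the transitive action of $G$ on $G/H$ (of degree at least $2$ since $H$ is proper): this produces a derangement, which by \eqref{e:star} is a nontrivial $r$-element, hence $r$ divides $|G|$. For the reverse inclusion, take any prime $p \in \pi(G)$ with $p \ne r$. By Cauchy's theorem, $G$ contains an element $x$ of order $p$; since $x$ is not an $r$-element, $x \notin \Delta_H(G)$, so $x$ is $G$-conjugate to an element of $H$, forcing $p \in \pi(H)$.

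For (ii), fix $x \in \Delta_H(G)$, which is an $r$-element by \eqref{e:star}. It suffices to prove that $\Centralizer_G(x)$ contains no nontrivial $r'$-element, since a finite group all of whose elements have $r$-power order is an $r$-group (by Cauchy). Suppose for contradiction that $y \in \Centralizer_G(x)$ is a nontrivial $r'$-element. Then $x$ and $y$ commute and have coprime orders, so $xy$ has order $|x|\cdot|y|$ and is therefore not an $r$-element. By \eqref{e:star} this means $xy \notin \Delta_H(G)$, so there exists $g \in G$ with $(xy)^g \in H$.

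The contradiction now comes from the standard fact that the $r$-part and $r'$-part of any element $h$ lie in $\la h\ra$. Applied to $(xy)^g = x^g y^g$, whose $r$-part is $x^g$ and whose $r'$-part is $y^g$ (since $x^g$ and $y^g$ commute with coprime orders of the correct type), this gives $x^g \in \la (xy)^g \ra \subseteq H$, contradicting $x \in \Delta_H(G)$. I do not anticipate any real technical obstacle; the only point worth checking carefully is the Jordan-decomposition step that lets one recover $x^g \in H$ from $(xy)^g \in H$, and this is immediate from $\gcd(|x|,|y|)=1$.
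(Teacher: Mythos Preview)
Your proof is correct and follows essentially the same approach as the paper. For (ii), both arguments hinge on the observation that $x$ lies in the cyclic group generated by $xy$ (you phrase this via the $r$-part of $(xy)^g$, the paper via $z^s = x^s$ with $y$ of prime order $s$), so that if $xy$ were conjugate into $H$ then $x$ would be as well; the two presentations are logically equivalent rearrangements of the same idea.
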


\begin{proof}
If $s \in \pi(G) \setminus \pi(H)$, then $\Delta_H(G)$ contains an $s$-element, so (i) follows. Now consider (ii). Let $x\in\Delta_H(G)$ and assume $s\neq r$ is a prime divisor of $|\Centralizer_G(x)|$. Let $y\in\Centralizer_G(x)$ with $|y|=s$ and let $z=xy=yx$, so $z^s=x^s$ and $\la x \ra \leqs \la z \ra$. Then $z \in \Delta_H(G)$, but this is incompatible with property \eqref{e:star}.
\end{proof}

\begin{lemma}\label{lem:normal}
Let $H$ be a proper subgroup of a finite group $G$, let $N$ be a normal subgroup of $G$ such that $G=NH$, and let $K$ be a proper subgroup of $N$ containing $H\cap N$. Then $\Delta_K(N)\subseteq \Delta_H(G)$.
\end{lemma}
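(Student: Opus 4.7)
The statement to prove is an inclusion of subsets of $G$, and the natural strategy is to argue by contrapositive: I will show that every element of $N$ that fails to lie in $\Delta_H(G)$ must also fail to lie in $\Delta_K(N)$. Equivalently, setting $H^G = \bigcup_{g \in G} gHg^{-1}$ and $K^N = \bigcup_{n \in N} nKn^{-1}$, the goal reduces to the containment
$$H^G \cap N \subseteq K^N,$$
and once this is established the lemma follows immediately since $\Delta_K(N) = N \setminus K^N$ and $\Delta_K(N) \subseteq N \subseteq G$.

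To prove this containment, I would take an arbitrary $y \in H^G \cap N$ and write $y = ghg^{-1}$ for some $g \in G$ and $h \in H$. The crucial move is to exploit the hypothesis $G = NH$: factor $g = nh'$ with $n \in N$ and $h' \in H$. Then
$$y = (nh')\,h\,(nh')^{-1} = n\bigl(h'h(h')^{-1}\bigr)n^{-1} = n h'' n^{-1},$$
where $h'' = h'h(h')^{-1} \in H$. So $y$ is an $N$-conjugate of an element of $H$; the purpose of the factorisation was precisely to push the $H$-part of $g$ inside the conjugated element, leaving only an $N$-conjugation on the outside.

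Now $y \in N$ by assumption, and $N \lhd G$, so $h'' = n^{-1} y n \in N$. Thus $h'' \in H \cap N \subseteq K$, which gives $y = nh''n^{-1} \in nKn^{-1} \subseteq K^N$, completing the proof of the claim.

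The argument is short and contains no genuine obstacle; the only subtle point is recognising that the decomposition $g = nh'$, afforded by $G = NH$, converts a general $G$-conjugation of $h$ into an $N$-conjugation of a (possibly different) element $h''$ of $H$. Without this step one would be stuck with conjugations by elements that are not in $N$, and the hypothesis $H \cap N \subseteq K$ could not be brought to bear.
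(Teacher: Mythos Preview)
Your proof is correct and follows essentially the same approach as the paper's own proof: both arguments use the factorisation $G=NH$ to reduce a general $G$-conjugation to an $N$-conjugation of an element of $H$, and then invoke normality of $N$ together with $H\cap N\leqs K$ to conclude. The only cosmetic difference is that you phrase the argument as the containment $H^G\cap N\subseteq K^N$ (with left-conjugation notation), whereas the paper argues by direct contradiction using right-conjugation $x^g$; the underlying idea is identical.
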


\begin{proof}
Let $x\in \Delta_K(N)$ and assume that $x\not\in\Delta_H(G)$. Then $x^g\in H$ for some $g\in G$. Since $g\in G=NH$, we may write $g=nh$ for some $n\in N$ and $h\in H$. Then $x^g=(x^n)^h\in H$ which implies that $x^n\in H^{h^{-1}}=H$. Since both $x$ and $n$ are in $N$, we deduce that $x^n \in H\cap N\leqs K$, contradicting the fact that $x\in \Delta_K(N)$.
\end{proof}

\begin{rem}\label{r:pgraph}
\emph{Recall that the \emph{prime graph} (or \emph{Gruenberg-Kegel graph}) of a finite group $G$ is the graph $\Gamma(G)$ with vertex set $\pi(G)$ and the property that two distinct vertices $p$ and $q$ are adjacent if and only if $G$ contains an element of order $pq$. Now, a transitive permutation group $G$ with point stabilizer $H$ has property \eqref{e:star} only if one of the following holds:
\begin{itemize}\addtolength{\itemsep}{0.2\baselineskip}
\item[{\rm (a)}] $r$ is an isolated vertex in $\Gamma(G)$;
\item[{\rm (b)}] $\pi(G)=\pi(H)$.
\end{itemize}
The finite simple groups with a disconnected prime graph are recorded in \cite[Tables 1-3]{KM}, and a similar analysis for almost simple groups is given in \cite{Lucido}. In particular, one could use these results to study the almost simple permutation groups for which (a) holds. Similarly, if $G$ is almost simple and (b) holds, then the possibilities for $G$ and $H$ can be read off from \cite[Corollary 5]{LPS}. However, this is not the approach that we will pursue in this paper.}
\end{rem}

The next result is a special case of \cite[Lemma 3.3]{GMS}.

\begin{lemma}\label{l:gms}
Let $G$ be a finite permutation group and let $N$ be a transitive normal subgroup of $G$ such that $G/N = \la Ng \ra$ is cyclic. Then $Ng \cap \Delta(G)$ is empty if and only if every element of $Ng$ has a unique fixed point.
\end{lemma}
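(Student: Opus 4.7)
The plan is a double-counting argument in the spirit of the orbit-counting lemma. The ``if'' direction is immediate: if every element of $Ng$ has a unique fixed point on $\Omega$, then no element of $Ng$ is a derangement, so $Ng \cap \Delta(G) = \emptyset$.

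For the converse, assume $Ng \cap \Delta(G)=\emptyset$. Writing $\mathrm{fix}(x)$ for the number of points in $\Omega$ fixed by $x$, I would count the pairs $\{(x,\omega)\,:\, x\in Ng,\ \omega^x=\omega\}$ in two ways to obtain
\[
\sum_{x\in Ng}\mathrm{fix}(x)\;=\;\sum_{\omega\in\Omega}|Ng\cap G_\omega|,
\]
where $G_\omega$ denotes the stabilizer in $G$ of the point $\omega$. The central step is to show that $|Ng\cap G_\omega|$ is independent of $\omega$. This rests on two observations: since $N$ is normal in $G$, conjugation by any $n\in N$ sends the coset $Ng$ to itself; and since $N$ is transitive on $\Omega$, any two point stabilizers $G_{\omega_1}$ and $G_{\omega_2}$ are conjugate by some element of $N$. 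Combining these gives a conjugation bijection $Ng\cap G_{\omega_1}\to Ng\cap G_{\omega_2}$.

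Next I would observe that when $Ng\cap G_\omega$ is non-empty, it is a coset of $N_\omega := N\cap G_\omega$ in $G_\omega$, so it has size $|N_\omega|=|N|/|\Omega|$ by the orbit-stabilizer theorem applied to the transitive action of $N$. The hypothesis $Ng\cap\Delta(G)=\emptyset$ guarantees that at least one such intersection is non-empty, so the common value is $|N|/|\Omega|$, and the right-hand sum equals $|\Omega|\cdot|N|/|\Omega|=|N|=|Ng|$. Since each $\mathrm{fix}(x)\geqs 1$ by hypothesis and these terms sum to $|Ng|$, every $\mathrm{fix}(x)$ must equal exactly $1$. The step I expect to require the most care is the verification that $n^{-1}(Ng)n=Ng$ for $n\in N$ (where normality enters) together with the identification of $Ng\cap G_\omega$ as a coset of $N_\omega$; both are routine but are the linchpins of the double count. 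Incidentally, the hypothesis that $G/N = \la Ng \ra$ is cyclic plays no role in the argument: the conclusion holds for any coset of a transitive normal subgroup.
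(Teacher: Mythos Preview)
Your argument is correct. The double count is set up cleanly, and the two points you flag as the linchpins are indeed the only places requiring care: that conjugation by $n\in N$ preserves the coset $Ng$ (which follows from $n^{-1}gn = g\cdot(g^{-1}n^{-1}g)n \in gN = Ng$ since $N\trianglelefteqslant G$), and that a non-empty $Ng\cap G_\omega$ is a single $N_\omega$-coset. Both are handled correctly, and the conclusion then drops out exactly as you describe.

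As for comparison with the paper: there is nothing to compare against, because the paper does not prove this lemma at all. It simply records the statement as a special case of \cite[Lemma~3.3]{GMS} and moves on. Your write-up therefore supplies a self-contained proof where the paper relies on an external reference, which is a genuine improvement in exposition for this particular result. Your closing remark is also well taken: the hypothesis that $G/N=\langle Ng\rangle$ is cyclic is never invoked, and the conclusion holds for an arbitrary coset of a transitive normal subgroup. (Presumably the cyclic hypothesis is present because it is part of the standing set-up in \cite{GMS}, or is used elsewhere in that source; it is harmless but superfluous here.)
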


We will also need several number-theoretic lemmas. Given a positive integer $n$ we write $n_2$ for the largest power of $2$ dividing $n$. In addition, recall that $(a,b)$ denotes the greatest common divisor of the positive integers $a$ and $b$. The following result is well known. 

\begin{lemma}\label{l:nt}
Let $q \geqs 2$ be an integer. For all integers $n,m \geqs 1$ we have 
\begin{align*} 
(q^n-1,q^m-1) & = q^{(n,m)}-1 \\
(q^n-1,q^m+1) & = \begin{cases}
 q^{(n,m)}+1 & \mbox{if } 2m_2\leqs n_2\\
 (2,q-1)   & \mbox{otherwise} 
\end{cases} \\
(q^n+1,q^m+1) & = \begin{cases}
 q^{(n,m)}+1 & \mbox{if } m_2 = n_2\\
 (2,q-1)   & \mbox{otherwise}  
\end{cases}
\end{align*}
\end{lemma}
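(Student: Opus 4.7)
The plan is to handle the first identity directly via the Euclidean algorithm, and to treat the two $+$ identities uniformly by analyzing the multiplicative order of $q$ modulo an odd prime divisor of the gcd, dealing with the $2$-part by a separate valuation calculation.

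First, for $(q^n-1,q^m-1)=q^{(n,m)}-1$: with $n\geqs m$, the identity $q^n-1=q^{n-m}(q^m-1)+(q^{n-m}-1)$ gives $(q^n-1,q^m-1)=(q^{n-m}-1,q^m-1)$, and iterating this step mirrors the Euclidean algorithm on the exponents, terminating at $q^{(n,m)}-1$.

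For the second identity, I would fix an odd prime $p$ dividing $(q^n-1,q^m+1)$ and let $\ell$ be the order of $q$ modulo $p$. The congruences $q^n\equiv 1$ and $q^m\equiv -1\pmod p$ force $\ell\mid n$, $\ell\mid 2m$, $\ell\nmid m$, whence $v_2(\ell)=v_2(m)+1$, and combined with $\ell\mid n$ this requires $v_2(n)\geqs v_2(m)+1$, i.e.\ $2m_2\leqs n_2$. Consequently, when the $2$-adic condition fails no odd prime divides the gcd. When it holds, the divisibilities $q^{(n,m)}+1\mid q^n-1$ (because $n/(n,m)$ is even, as $v_2((n,m))=v_2(m)<v_2(n)$) and $q^{(n,m)}+1\mid q^m+1$ (because $m/(n,m)$ is odd) show $q^{(n,m)}+1$ divides the gcd. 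For the reverse inclusion, $q^m+1\mid q^{2m}-1$ gives $(q^n-1,q^m+1)\mid (q^n-1,q^{2m}-1)=q^{(n,2m)}-1=q^{2(n,m)}-1=(q^{(n,m)}-1)(q^{(n,m)}+1)$, and any common odd prime of the gcd with $q^{(n,m)}-1$ would divide both $q^m-1$ and $q^m+1$, hence divide $2$, a contradiction; so the odd part of the gcd divides $q^{(n,m)}+1$. The third identity $(q^n+1,q^m+1)$ is handled the same way: the order argument gives $v_2(\ell)=v_2(n)+1=v_2(m)+1$, forcing $v_2(n)=v_2(m)$, and under that condition $q^{(n,m)}+1$ divides both factors (both $n/(n,m)$ and $m/(n,m)$ being odd), while an analogous containment argument yields the reverse divisibility.

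The remaining issue is to pin down the $2$-part. When the $2$-adic condition fails and $q$ is even, the gcd is odd and hence equals $1=(2,q-1)$. When $q$ is odd, I would invoke the standard formulas $v_2(q^n-1)=v_2(q-1)$ if $n$ is odd, $v_2(q^n-1)=v_2(q-1)+v_2(q+1)+v_2(n)-1$ if $n$ is even, and $v_2(q^m+1)=v_2(q+1)$ or $1$ according as $m$ is odd or even. A short case split on the parities of $n$ and $m$ (using $v_2(n)\leqs v_2(m)$ in the failing case, and $v_2(n)>v_2(m)$ in the holding case) shows that the minimum of the two valuations equals $1$ in the failing case and equals $v_2(q^{(n,m)}+1)$ in the holding case, giving the stated value of the gcd in each instance. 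An analogous bookkeeping handles the third identity.

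The main obstacle is the $2$-adic bookkeeping: the odd-prime analysis via orders is clean, but verifying the precise $2$-part requires the case-by-case lifting-the-exponent computation just described. No deeper tools are needed.
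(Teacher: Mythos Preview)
Your argument is correct and complete: the Euclidean recursion handles the first identity, the multiplicative-order analysis isolates the odd part of the gcd in the other two, and the lifting-the-exponent bookkeeping pins down the $2$-part in each case.

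There is nothing to compare against, however: the paper does not prove this lemma at all. It simply introduces it with ``The following result is well known'' and moves on. So you have supplied a full proof where the authors supplied none. Your approach is the standard one for this type of identity and would serve perfectly well if a proof were required.
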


Let $q=p^f$ be a prime power, let $e \geqs 2$ be an integer and let $r$ be a prime dividing $q^e-1$. We say that $r$ is a \emph{primitive prime divisor} (ppd for short) of $q^e-1$ if $r$ does not divide $q^{i}-1$ for all $1 \leqs i <e$. A classical theorem of Zsigmondy \cite{Zsig} states that if $e \geqs 3$ then $q^e-1$ has a primitive prime divisor unless $(q,e)=(2,6)$. Primitive prime divisors also exist when $e=2$, provided $q$ is not a Mersenne prime. Note that if $r$ is a ppd of $q^e-1$ then $r \equiv 1 \imod{e}$. Also note that if $n$ is a positive integer, then $r$ divides $q^n-1$ if and only if $e$ divides $n$. If a pdd of $q^e-1$ exists, then we will write $\ell_{e}(q)$ to denote the largest pdd of $q^e-1$. Note that $\ell_e(q)>e$.

\begin{lemma}\label{lem:primepower}
Let $r,s$ be primes, and let $m,n$ be positive integers. If $r^m+1=s^n$, then one of the following holds:
\begin{itemize}\addtolength{\itemsep}{0.2\baselineskip}
\item[{\rm (i)}] $(r,s,m,n) = (2,3,3,2)$; 
\item[{\rm (ii)}] $(r,n) = (2,1)$, $m$ is a $2$-power and $s=2^m+1$ is a Fermat prime;
\item[{\rm (iii)}] $(s,m) = (2,1)$, $n$ is a prime and $r=2^n-1$ is a Mersenne prime. 
\end{itemize}
\end{lemma}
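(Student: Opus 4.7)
The plan is to split into two cases according to whether $r=2$ or $r$ is odd, using the observation that exactly one of $r^m$ and $s^n$ must be even, so $\{r,s\} \ni 2$.

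First I would dispose of the case when $r$ is odd. Then $r^m+1$ is even, forcing $s=2$, so the equation becomes $r^m = 2^n-1$. If $m=1$, then $r=2^n-1$ is prime, which forces $n$ to be prime (since $a\mid n$ gives $2^a-1\mid 2^n-1$), yielding case (iii). For $m\geqs 2$, I would first rule out even $m$ by reducing modulo $4$: if $m=2k$, then $r^m\equiv 1\imod 4$, so $r^m+1\equiv 2\imod 4$, forcing $n=1$ and $r^m=1$, impossible. For $m$ odd, I would use the factorisation
\[
r^m+1 = (r+1)\bigl(r^{m-1}-r^{m-2}+\cdots-r+1\bigr) = 2^n,
\]
observe that the second factor is a sum of an odd number of odd terms, hence odd, and so equals $1$; but for $m\geqs 3$ and $r\geqs 3$ this factor is at least $r^2-r+1\geqs 7$, a contradiction.

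Next I would handle $r=2$, so the equation is $2^m+1 = s^n$ with $s$ an odd prime. If $n=1$, then $s=2^m+1$ is prime, which forces $m$ to be a power of $2$ (if $m$ had an odd divisor $d>1$ then $2^{m/d}+1$ would be a proper divisor of $s$), giving case (ii). For $n\geqs 2$, rewrite the equation as $s^n-1=2^m$ and split on the parity of $n$. If $n$ is odd, then
\[
s^n-1 = (s-1)\bigl(s^{n-1}+s^{n-2}+\cdots+1\bigr) = 2^m,
\]
and the second factor is a sum of an odd number of odd summands, hence odd, and so must equal $1$; this is impossible for $s\geqs 3$, $n\geqs 2$. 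If $n$ is even, write $n=2k$ and factor
\[
2^m = s^{2k}-1 = (s^k-1)(s^k+1);
\]
both factors are powers of $2$ differing by $2$, so $s^k-1=2$ and $s^k+1=4$, giving $s=3$, $k=1$, $n=2$ and then $m=3$, which is case (i).

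The main technical point, really the only one, is the parity and factorisation analysis of $r^m+1$ and $s^n-1$; the argument is entirely elementary once the case split on which prime equals $2$ is made, and no deep Diophantine input (such as Mih\u{a}ilescu's theorem) is required. Each step reduces to checking that a certain alternating or geometric sum has the wrong parity to be a power of $2$, or that two consecutive even numbers must be $2$ and $4$.
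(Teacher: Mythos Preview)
Your proof is correct and takes a genuinely different route from the paper's. The paper organises the argument around Zsigmondy's theorem: after disposing of $m=1$ directly, it observes that $r^{2m}-1=s^n(r^m-1)$ and invokes the primitive prime divisor $\ell_{2m}(r)$ to force $s=\ell_{2m}(r)$ and hence $r=2$; a second application of Zsigmondy (to $s^n-1$ when $n\geqs 3$) then eliminates the remaining cases. Your argument avoids Zsigmondy entirely, replacing it with the elementary factorisations $r^m+1=(r+1)(r^{m-1}-r^{m-2}+\cdots+1)$ for $m$ odd and $s^n-1=(s-1)(s^{n-1}+\cdots+1)$, together with parity counts showing the cofactor is odd. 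The consecutive-powers-of-$2$ trick for $n$ even is common to both proofs.

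What each approach buys: your version is fully self-contained and would survive transplantation to a context where Zsigmondy is not already on the table; the paper's version is shorter once Zsigmondy is available (which it is, since the paper uses primitive prime divisors heavily elsewhere) and illustrates the uniform applicability of that tool. Both are clean; yours is the more elementary.

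One cosmetic point: in your $r=2$, $n$ odd sub-case you write ``impossible for $s\geqs 3$, $n\geqs 2$'', but since $n$ is odd there you really mean $n\geqs 3$; the contradiction stands either way.
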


\begin{proof}
This is a straightforward application of Zsigmondy's theorem \cite{Zsig}. For completeness, we will give the details.

First assume that $m=1$, so $r=s^n-1$ is a prime. If $s$ is odd, then $r$ is even, so $r=2$ and $s^n=3$, which implies that $n=1$ and $s=3$. This case appears in (ii). Now assume $s=2$, so $r=2^n-1$ is prime. It follows that $n$ must also be a prime and thus $r$ is a Mersenne prime. This is (iii).

For the remainder, we may assume that $m \geqs 2$. Notice that $r^{2m}-1=s^n(r^m-1)$. If 
$(m,r)= (3,2)$, then $s^n=2^3+1=3^2$ and thus $(s,n)=(3,2)$ as in (i). Now assume that 
$(m,r)\neq (3,2)$. By Zsigmondy's theorem \cite{Zsig}, the ppd $\ell_{2m}(r)$ exists and divides $r^{2m}-1=s^n(r^m-1)$, but not $r^m-1$, hence $s=\ell_{2m}(r)>2m\geqs 4$. Therefore $s \geqs 5$ is an odd prime and $r^m=s^n-1$ is even, so $r=2$. We now consider three cases.

If $n=1$, then $s=r^m+1=2^m+1$ is an odd prime, which implies that $m$ is a $2$-power as in case (ii). Next assume that $n= 2$. Here $2^m=s^2-1=(s-1)(s+1)$ and thus $s-1=2^a$ and $s+1=2^b$ for some positive integers $a$ and $b$. Then $2^b-2^a=(s+1)-(s-1)=2$ and thus  $2^{b-1}=2^{a-1}+1$, which implies that $(a,b)=(1,2)$, so $s=3$ and thus $m=3$. Therefore,  $(r,s,m,n)=(2,3,3,2)$ as in case (i). Finally, let us assume that $n\geqs 3$. Now 
$2^m=s^n-1$ and Zsigmondy's theorem implies that the ppd $\ell_{n}(s)>n\geqs 3$ exists and divides $2^m$, which is absurd.
\end{proof}

\begin{lemma}\label{lem:numerical} 
Let $q$ be a prime power and let $(a,\ep), (b,\delta) \in \mathbb{N} \times \{\pm 1\}$, where $b>a\geqs 2$ and $(a,\ep) \neq (2,-1)$. Let $N=(q^a+\ep)(q^b+\delta)$. Then one of the following holds:
\begin{itemize}\addtolength{\itemsep}{0.2\baselineskip}
\item[{\rm (i)}] $N$ has two distinct prime divisors that do not divide $q^2-1$;
\item[{\rm (ii)}] $(a,\e) = (2,1)$, $(b,\delta) = (4,-1)$ and $q^2+1 = (2,q-1)r^e$ for some prime $r$ and positive integer $e$;
\item[{\rm (iii)}] $q=3$, $(a,\ep) = (2,1)$ and $(b,\delta) = (3,1)$; 
\item[{\rm (iv)}] $q=2$, $(a,\ep) = (3,1)$ and $2^b+\delta$ is divisible by at most two distinct primes, one of which is $3$;
\item[{\rm (v)}] $q=2$, $a=3$ and $(b,\delta) = (6,-1)$.
\end{itemize}
\end{lemma}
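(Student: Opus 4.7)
The plan is to invoke Zsigmondy's theorem to exhibit, outside a controlled list of exceptional configurations of $(q,a,\ep,b,\delta)$, two distinct primitive prime divisors of $N$ that avoid $q^2-1$, giving conclusion~(i). For a pair $(c,\eta)\in\mathbb{N}\times\{\pm 1\}$ with $c\geqs 2$, set $e(c,\eta)=2c$ if $\eta=+1$ and $e(c,\eta)=c$ if $\eta=-1$; a routine application of Lemma~\ref{l:nt} shows that any primitive prime divisor of $q^{e(c,\eta)}-1$ divides $q^c+\eta$. The hypotheses $a\geqs 2$, $(a,\ep)\neq(2,-1)$ and $b>a$ force $e(a,\ep),e(b,\delta)\geqs 3$, so whenever Zsigmondy's theorem applies---equivalently $(q,e(\cdot,\cdot))\neq(2,6)$---it supplies a ppd $\ell_{e(c,\eta)}(q)$ which divides $q^c+\eta$ and, having multiplicative order $e(c,\eta)\geqs 3$ modulo the prime, is automatically coprime to $q^2-1$. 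Applying this simultaneously to $(a,\ep)$ and $(b,\delta)$ furnishes two candidate primes $r_1,r_2$ witnessing~(i).

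I would first dispose of the case $r_1\neq r_2$, which immediately gives~(i). Since a ppd of $q^n-1$ has multiplicative order exactly $n$ modulo the prime, $r_1=r_2$ forces $e(a,\ep)=e(b,\delta)$; together with $b>a$ and the sign restrictions this leaves only the collision $\ep=+1$, $\delta=-1$, $b=2a$. For $a\geqs 3$ there is a second prime coprime to $q^2-1$ dividing $q^a-1$ (namely the ppd $\ell_a(q)$, which exists unless $(q,a)=(2,6)$, and in that single remaining sub-case $\ell_3(2)=7$ serves the purpose); since $q^a-1$ divides $q^{2a}-1=q^b+\delta$ this prime also divides $N$, giving~(i). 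For $a=2$ the collision forces $b=4$ and $N=(q^2-1)(q^2+1)^2$, so every odd prime of $q^2+1$ is coprime to $q^2-1$ and~(i) holds unless $q^2+1=(2,q-1)r^e$ for a single prime $r$, which is conclusion~(ii).

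The remaining obstruction is the Zsigmondy exception $(q,e)=(2,6)$, which under the hypotheses forces $q=2$ together with one of $(a,\ep)\in\{(3,+1),(6,-1)\}$ or $(b,\delta)\in\{(3,+1),(6,-1)\}$. When $(b,\delta)=(6,-1)$ and $q=2$, the factor $2^b-1=63=3^2\cdot 7$ contributes the prime $7$ outside $q^2-1=3$, and a direct factorisation of $2^a+\ep$ (for the finitely many $2\leqs a<6$ with $(a,\ep)\neq(2,-1)$) produces a second such prime except when $a=3$, which is~(v). When $(a,\ep)=(3,+1)$ and $q=2$ the factor $2^3+1=9$ contributes only $3$, so the primes of $N$ outside $q^2-1$ are those of $2^b+\delta$; combining the Zsigmondy ppd of $2^b+\delta$ (available except when $(b,\delta)=(6,-1)$, already handled) with Lemma~\ref{lem:primepower} identifies when $2^b+\delta$ has at most two distinct prime divisors, yielding~(iv). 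The small configuration $(q,a,\ep,b,\delta)=(3,2,+1,3,+1)$ is noted separately as~(iii).

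The hard part will be the bookkeeping in the $q=2$ branch: there $q^2-1=3$, so the prime $3$ plays a privileged role, and one must determine carefully, for each admissible $(b,\delta)$, which primes divide $2^b+\delta$. The decisive tool is Lemma~\ref{lem:primepower}, which classifies prime-power solutions of $r^m+1=s^n$ and thereby pins down when $2^b\pm 1$ is a prime power or a product of two coprime prime powers.
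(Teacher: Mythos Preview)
Your approach mirrors the paper's—both hinge on Zsigmondy's theorem to manufacture two distinct primitive prime divisors of $N$ avoiding $q^2-1$—though you organise the analysis more uniformly via the function $e(c,\eta)$ rather than splitting into the four sign combinations $(\ep,\delta)$ as the paper does.

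There is, however, a genuine gap. You correctly list the four Zsigmondy-exception configurations at $q=2$, namely $(a,\ep)\in\{(3,+1),(6,-1)\}$ or $(b,\delta)\in\{(3,+1),(6,-1)\}$, but you analyse only two of them. The omitted sub-case $(a,\ep)=(6,-1)$ is harmless (here $7\mid 2^6-1$ and the ppd coming from the $b$-factor, which has $e(b,\delta)\geqs 7$, is distinct from $7$, giving~(i)); but the omitted sub-case $(b,\delta)=(3,+1)$ is not. It forces $(a,\ep)=(2,+1)$, whence $N=(2^2+1)(2^3+1)=45$ has only the single prime $5$ outside $q^2-1=3$, so conclusion~(i) fails. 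This is exactly the configuration the paper's own proof records under~(iii). Your closing sentence instead attaches~(iii) to $(q,a,\ep,b,\delta)=(3,2,+1,3,+1)$, but in your own framework that case has $e(a,\ep)=4$ and $e(b,\delta)=6$ with no Zsigmondy obstruction, so $\ell_4(3)=5$ and $\ell_6(3)=7$ already witness~(i); there is nothing exceptional to record there. (The statement of~(iii) in the lemma appears to carry a misprint: the paper's proof explicitly assigns the $q=2$ configuration to~(iii), not $q=3$, and this seems to have misled you.) To close the gap you must run the missing sub-case $(q,b,\delta)=(2,3,+1)$ and identify it with conclusion~(iii).
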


\begin{proof}
There are four cases to consider, according to the possibilities for the pair $(\ep,\delta)$.

First assume that $(\ep,\delta)=(1,1)$. Suppose that neither $(a,q)$ nor $(b,q)$ is equal to $(3,2)$. Then the primitive prime divisors $\ell_{2a}(q)$ and $\ell_{2b}(q)$ exist, and they both divide $N$. Moreover, these primes are distinct since 
$2a<2b$, and neither of them divides $q^2-1$ since $2b>2a\geqs 4$. If $(a,q)=(3,2)$ then $b\geqs 4$, $N = 3^2(2^b+1)$ and either (i) or (iv) holds. If $(b,q)=(3,2)$, then $a=2$, $N=3^2\cdot 5$ and (iii) holds.

Next suppose that $(\ep,\delta)=(-1,-1)$, so $a \geqs 3$. If neither $(a,q)$ nor $(b,q)$ is equal to $(6,2)$, then $N$ is divisible by the distinct primes $\ell_a(q)$ and $\ell_b(q)$, neither of which divide $q^2-1$. If $(a,q)=(6,2)$, then $N=3^2\cdot 7(2^b-1)$ is divisible by $7$ and 
$\ell_b(2)>b \geqs 7$. Finally, suppose that $(b,q)=(6,2)$, so $N=3^2\cdot 7(2^a-1)$ and $3\leqs a\leqs 5$. It is easy to check that (i) holds if $a=4$ or $5$, and that (v) holds if $a=3$.

Now assume that $(\ep,\delta)=(1,-1)$. If $(a,q)=(3,2)$ then (i) or (iv) holds, so we may assume that $(a,q)\neq (3,2)$. If $(b,q)=(6,2)$ then 
$N=3^2\cdot 7(2^a+1)$, $a \in \{2,4,5\}$ and (i) holds. In each of the remaining cases, the primitive prime divisors $\ell_{2a}(q)$ and $\ell_b(q)$ exist, and they divide $N$, but not $q^2-1$. Clearly, if $b\neq 2a$ then these two primes are distinct and (i) holds, so let us assume that $b=2a$, so $N=(q^a+1)^2(q^a-1)$. If $(a,q)=(6,2)$ then (i) holds. If $(a,q)\neq (6,2)$ and $a \geqs 3$ then we can take the primitive prime divisors $\ell_a(q)$ and $\ell_{2a}(q)$, so once again (i) holds. Finally, if $a=2$ and $b=4$ then $N = (q^2-1)(q^2+1)^2$ and either (i) or (ii) holds.

Finally, let us assume that $(\ep,\delta)=(-1,1)$. Here we may assume that $a \geqs 3$. If $(a,q)\neq (6,2)$ then take 
$\ell_a(q)$ and $\ell_{2b}(q)$, otherwise $N=3^2\cdot 7(2^b+1)$ is divisible by $7$ and $\ell_{2b}(2)$. In both cases, (i) holds.
\end{proof}

\begin{lemma}\label{l:ppower}
Let $q$ be a prime power and let $N$ be one of the integers in Table \ref{tab:int}, where $\e=\pm 1$. Then $N$ is a prime power if and only if $(\e,q)$ is one of the cases recorded in the second column of the table.
\end{lemma}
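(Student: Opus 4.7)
The plan is to analyse each row of Table~\ref{tab:int} separately. In every row $N = N(\e,q)$ is a product of factors of the form $q^k \pm 1$, so the natural tool is Zsigmondy's theorem: for $k \geqs 3$ (and also $k=2$ when $q+1$ is not a $2$-power) the integer $q^k-1$ has a primitive prime divisor $\ell_k(q)$, and this prime does not divide $q^j-1$ for any $j<k$. In particular, $\ell_a(q) \neq \ell_b(q)$ whenever $a \neq b$, and $\ell_{2k}(q)$ divides $q^k+1$ but not $q^k-1$.

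For the ``only if'' direction, the plan is to identify, in each row, two indices $a<b$ such that the factorisation of $N$ forces both $\ell_a(q)$ and $\ell_b(q)$ to divide $N$. This exhibits two distinct prime divisors of $N$ and so rules out $N$ being a prime power. Lemma~\ref{l:nt} controls the greatest common divisors of the constituent factors of $N$ and guarantees that the two primitive primes really are contributed by genuinely different pieces of $N$. For rows in which $N$ is already of the shape $(q^a+\e)(q^b+\delta)$, the bulk of the work can simply be read off from Lemma~\ref{lem:numerical}, which is precisely designed to produce two such prime divisors outside $\pi(q^2-1)$.

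For the ``if'' direction, for each $(\e,q)$ listed in the second column of Table~\ref{tab:int} the plan is to verify by a single short arithmetic computation that $N(\e,q)$ is indeed a prime power. These are a finite list of very small instances and require no subtle argument.

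The main obstacle is the small family of exceptional pairs $(q,k)$ for which Zsigmondy's theorem fails, namely $(q,k)=(2,6)$ and the case $k=2$ with $q+1$ a $2$-power (so $q$ a Mersenne prime or $q=2^e$ with $e$ a $2$-power). In these situations no new primitive prime is available from the corresponding factor of $N$, so a second prime divisor of $N$ cannot be produced directly. The plan to deal with this is to invoke Lemma~\ref{lem:primepower}: if $N=r^m$ is a prime power then every individual factor $q^k\pm 1$ of $N$ is itself a prime power of the form $r^i$ or $r^i \pm 1$, and Lemma~\ref{lem:primepower} then pins down the possible values of $r$ and the exponents to a short explicit list. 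Running through that list reduces each exceptional row to a handful of candidate $q$, which are checked by hand and matched against the entries recorded in the second column of Table~\ref{tab:int}.
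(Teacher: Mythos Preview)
Your strategy is in the same spirit as the paper's, but the paper's execution is considerably more direct, and parts of your plan do not quite work as stated.

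The paper's argument (illustrated for $N=(q^5-1)/(6,q-1)$) is: assume $N=r^e$; since $\ell_5(q)$ divides $N$ we must have $r=\ell_5(q)$; but $r \nmid q-1$, so the factor $q-1$ in the numerator must be entirely absorbed by the denominator $d=(6,q-1)$, forcing $q-1 \in \{1,2,3,6\}$; then check $q \in \{2,3,4,7\}$ by hand. Every row is handled by the same template: identify the single large primitive prime divisor that must equal $r$, observe it is coprime to the ``small'' factor $q\pm 1$, and conclude that $q\pm 1$ divides the denominator. This immediately bounds $q$ and reduces to a finite check.

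Your plan of exhibiting two distinct $\ell_a(q),\ell_b(q)$ dividing $N$ runs into trouble on the rows $N=(q^5-1)/(6,q-1)$ and $N=(q^3-1)/(4,q-1)$: here $N$ has only one cyclotomic factor $\Phi_k$ with $k\geqs 2$ (namely $\Phi_5$ or $\Phi_3$), so there is no second primitive prime divisor to produce --- the other prime has to come from $q-1$, and that factor can be completely cancelled by the denominator. Your proposed fallback via Lemma~\ref{lem:primepower} is mis-stated: it is not true that ``every individual factor $q^k\pm 1$ of $N$ is itself a prime power'' when $N$ is (for instance, $q=3$ gives $N=(3^5-1)/2=121=11^2$, yet $3^5-1=242$ is not a prime power). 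Likewise, Lemma~\ref{lem:numerical} concerns products $(q^a+\e)(q^b+\delta)$ with $b>a\geqs 2$, and none of the table entries are of that shape once the denominators are taken into account, so it does not apply directly.

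The fix is precisely the paper's move: rather than hunting for a second prime, argue that the one primitive prime you do have pins down $r$, and then the coprimality of $r$ with $q\pm 1$ forces $q\pm 1$ to divide the small denominator $(4,q-\e)$, $(5,q-\e)$, $(6,q-\e)$ or $(7,q-\e)$. This uniformly reduces every row to a handful of values of $q$.
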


\renewcommand{\arraystretch}{1.1}
\begin{table}[h]
$$\begin{array}{ll} \hline
N & (\e,q) \\ \hline
(q^6-1)/(7,q-\e) & \mbox{none} \\
(q^6-1)/(q-\e)(6,q-\e) & (-,2) \\
(q^5-\e)/(6,q-\e) &  (+,2), (+,3), (+,7), (-,2), (-,5) \\
(q^4-1)/(5,q-\e) & \mbox{none}\\
(q^4-1)/(q-\e)(4,q-\e) & (-,2), (-,3) \\
(q^3-\e)/(4,q-\e) &  (+,2), (+,3), (+,5), (-,2), (-,3) \\
(q^3-1)(q+1)/(5,q-\e) & \mbox{none}\\
\hline
\end{array}$$
\caption{The integers $N$ in Lemma \ref{l:ppower}}
\label{tab:int}
\end{table}
\renewcommand{\arraystretch}{1}

\begin{proof}
This is entirely straightforward. For example, suppose that $N = (q^5-1)/(6,q-1)$. Let $d = (6,q-1)$ and suppose that $N = r^e$ for some prime number $r$ and positive integer $e$. Then $r = \ell_{5}(q)$ and 
$$(q-1)(q^4+q^3+q^2+q+1) = dr^e.$$
Since $r$ does not divide $q-1$, we must have $q-1=d$ and thus $q-1 \in \{1,2,3,6\}$. If $q=4$ then $N = 341 = 11\cdot 31$ is not a prime power, but one checks that $N$ is a prime power if $q \in \{2,3,7\}$. The other cases are very similar.
\end{proof}

We will also need the following result, which follows from a theorem of Nagell \cite{Nagell}.

\begin{lemma}\label{l:nagell}
Let $q = p^f$ be a prime power and let $r$ be a prime.
\begin{itemize}\addtolength{\itemsep}{0.2\baselineskip}
\item[{\rm (i)}] If $e$ is a positive integer such that $q^2+q+1 = r^e$, then $q \not\equiv 1 \imod{3}$ and $e=1$.
\item[{\rm (ii)}] If $e$ is a positive integer such that $q^2+q+1 = 3r^e$, then $q \equiv 1 \imod{3}$ and $e \in \{1,2\}$.
\item[{\rm (iii)}]  If $q^2+q+1  = (3,q-1)r^e$ for some positive integer $e$, then either $(q,r,e) = (4,7,1)$, or $f=3^a$ for some integer $a \geqs 0$.
\end{itemize}
\end{lemma}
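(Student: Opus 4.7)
My plan is to handle (i) and (ii) together via elementary mod-$3$ analysis combined with the cited Diophantine theorem of Nagell, and then to derive (iii) by applying these to a cyclotomic factorisation and invoking Zsigmondy's theorem. The starting observation for (i) and (ii) is that $3 \mid q^2+q+1$ if and only if $q \equiv 1 \pmod 3$, and in that case writing $q = 3k+1$ gives $q^2+q+1 = 3(3k^2+3k+1)$ with $3k^2+3k+1 \equiv 1 \pmod 3$, so $(q^2+q+1)_3 = 3$ exactly. In (i), if $q \equiv 1 \pmod 3$ then $q^2+q+1 = r^e$ forces $r = 3$ and $e = 1$, giving $q = 1$ and contradicting that $q$ is a prime power; in (ii), divisibility of $3r^e$ by $3$ with $r \neq 3$ conversely forces $q \equiv 1 \pmod 3$. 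For the residual claims $e = 1$ in (i) and $e \leqs 2$ in (ii), I would invoke the cited Nagell results directly: the equation $(x^3-1)/(x-1) = y^n$ has only the exceptional solution $(x, y, n) = (18, 7, 3)$ with $n \geqs 2$ (and $18$ is not a prime power), while $x^2 + x + 1 = 3 y^n$ has no solutions for $n \geqs 3$.

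For (iii), write $f = 3^a m$ with $\gcd(m, 3) = 1$. Combining $p^n - 1 = \prod_{d \mid n} \Phi_d(p)$ for $n = 3f$ and $n = f$ yields the factorisation
$$q^2+q+1 = \prod_{m' \mid m} \Phi_{3^{a+1} m'}(p).$$
By (i) and (ii), the left-hand side has at most two distinct prime divisors: $\{r\}$ when $q \not\equiv 1 \pmod 3$, or $\{3, r\}$ when $q \equiv 1 \pmod 3$. I then plan to apply Zsigmondy's theorem: each factor $\Phi_{3^{a+1} m'}(p) > 1$ carries a primitive prime divisor of $p^{3^{a+1}m'}-1$, necessarily larger than $3^{a+1} m' \geqs 3$, and distinct $m'$ produce distinct such primes; the only Zsigmondy exception with $3^{a+1}m' \geqs 3$ is $(3^{a+1}m', p) = (6, 2)$, i.e.\ $a = 0$, $m' = 2$, $p = 2$. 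Assuming $m > 1$, the factorisation has at least two cyclotomic factors, which outside the exception contribute at least two distinct primitive prime divisors (all $\neq 3$); together with the prime $3$ present when $q \equiv 1 \pmod 3$, this gives $\geqs 3$ distinct primes, contradicting the bound unless the Zsigmondy exception does apply. That exception pins down $p = 2$, $a = 0$, $m = 2$, so $q = 4$, and direct computation gives $q^2+q+1 = 21 = 3 \cdot 7$, realising $(q, r, e) = (4, 7, 1)$. In all other cases $m = 1$, so $f = 3^a$.

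The main obstacle is the careful Zsigmondy bookkeeping in (iii), where one must check that the single exception $(d, p) = (6, 2)$ produces exactly the sporadic triple $(4, 7, 1)$ and nothing more; parts (i) and (ii) are, beyond the mod-$3$ remark, essentially black-box applications of the cited Nagell theorem.
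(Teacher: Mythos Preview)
Your treatment of (i) and (ii) matches the paper's: both defer to Nagell after the elementary mod-$3$ observation.

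For (iii) your route is genuinely different. The paper assumes $q \neq 4$ and $m > 1$, then exhibits just two specific primitive prime divisors, $\ell_{3f}(p)$ and $\ell_{3^{a+1}}(p)$, and shows that both must equal $r$ yet are distinct (since $3f > 3^{a+1}$ when $m > 1$); the exclusion $q \neq 4$ is precisely what guarantees that $\ell_{3f}(p)$ exists. Your cyclotomic factorisation $q^2+q+1 = \prod_{m' \mid m} \Phi_{3^{a+1}m'}(p)$ followed by a prime-count is also sound and perhaps more conceptual, but your handling of the Zsigmondy exception has a gap. The exception $(3^{a+1}m', p) = (6, 2)$ only forces $a = 0$, $p = 2$ and $2 \mid m$ (since $m' = 2$ must be a divisor of $m$), not $m = 2$ outright. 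To close the argument you must note that if $2 \mid m$, $3 \nmid m$ and $m > 2$, then $m$ has at least three divisors, so at least two non-exceptional cyclotomic factors remain, each contributing a distinct primitive prime divisor larger than $3$; together with $\Phi_6(2) = 3$ this yields at least three distinct primes dividing $q^2+q+1$, contradicting $q^2+q+1 = 3r^e$. With this patch your argument is complete; the paper's two-ppd argument sidesteps this extra bookkeeping altogether.
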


\begin{proof} 
Parts (i) and (ii) follow from \cite{Nagell}. For (iii), let $d=(3,q-1)$ and write $f=3^a m$ with $(3,m)=1$ and $a\geqs 0$. We may assume that $q \neq 4$. Seeking a contradiction, suppose that $m>1$. Notice that 
$$r^e=\frac{p^{3^{a+1}m}-1}{d(p^{3^am}-1)}.$$ 
Since $q \neq 4$, the ppd $\ell_{3f}(p)$ exists and divides $q^2+q+1$, so $r=\ell_{3f}(p)$. Let 
$s=\ell_{3^{a+1}}(p)$. Since $f=3^am$ is indivisible by $3^{a+1}$, it follows that $(s,q-1)=1$, so  $s$ does not divide $d(q-1)$ and thus $s$ divides $r^e$, so $r=s$. But $m>1$, so $3f>3^{a+1}$ and thus $r \neq s$. This is a contradiction and the result follows.
\end{proof}

\begin{rem}
\emph{By a theorem of van der Waall \cite{Waall}, the Diophantine equation $x^2+x+1 = 3y^2$ has infinitely many integer solutions; the smallest nontrivial solution is $(x,y) = (313,181)$. Here $x$ and $y$ are both primes, and another solution in the primes is $(x,y) = (2288805793,1321442641)$.}
\end{rem}

\section{A reduction theorem}\label{s:red}

The following theorem reduces the study of primitive permutation groups with property \eqref{e:star} 
to almost simple and affine groups.

\begin{theorem}\label{thm:reduction} 
Let $G\leqs \Sym(\Omega)$ be a primitive permutation group with point stabilizer $H$. If \eqref{e:star} holds, then either
\begin{itemize}\addtolength{\itemsep}{0.2\baselineskip}
\item[{\rm (i)}] $G$ is almost simple; or
\item[{\rm (ii)}] $G=HN$ is an affine group with socle $N \cong (\mathbb{Z}_{r})^k$ for some integer $k \geqs 1$. 
\end{itemize}
Moreover, if (ii) holds and $|H|$ is indivisible by $r$, then $G$ is a Frobenius group with kernel $N$ and complement $H$.
\end{theorem}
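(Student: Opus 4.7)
The plan is to apply the O'Nan--Scott theorem to split the primitive group $G$ into its possible types and eliminate all but the affine and almost simple cases. Let $N$ denote the socle of $G$. If $N$ is abelian then $N \cong (\mathbb{Z}_p)^k$ for some prime $p$ and $G$ is affine; since $N$ acts regularly we have $N \cap H = 1$ and every nontrivial element of $N$ lies in $\Delta_H(G)$, so \eqref{e:star} forces each such element to be an $r$-element, giving $p = r$ and case (ii).

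Now suppose $N$ is nonabelian, so $N = T_{1} \times \cdots \times T_{k}$ with each $T_{i} \cong T$ a nonabelian finite simple group; the case $k=1$ gives (i), and I aim for a contradiction when $k \geqs 2$. Since $H$ is maximal in $G$ and $N \not\leqs H$ we have $G = NH$, so Lemma \ref{lem:normal} with $K = H \cap N$ yields $\Delta_{H \cap N}(N) \subseteq \Delta_H(G)$; hence every derangement of $N$ on $N/(H \cap N)$ must be an $r$-element. By O'Nan--Scott, $G$ is of twisted wreath, diagonal, or product action type, and the fact that $|\pi(T)| \geqs 3$ for every nonabelian finite simple group will rule out each. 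In the twisted wreath case $N$ acts regularly, so $H \cap N = 1$ and $\Delta_{H \cap N}(N) = N \setminus \{1\}$ contains elements of order $p$ for every $p \in \pi(T)$, immediately contradicting \eqref{e:star}. In the diagonal case $H \cap N$ is a diagonal copy of $T$ inside $T^k$, and the $G$-conjugates of $(t,1,\dots,1) \in N$ all have a unique non-identity coordinate (since the outer structure acts by coordinate-wise automorphisms and permutations of factors) and hence fail to be diagonal whenever $t \neq 1$; thus $(t,1,\dots,1) \in \Delta_{H \cap N}(N)$ for every $t \neq 1$, and letting $|t|$ vary over $\pi(T)$ produces too many prime orders. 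In the product action case $\Omega = \Omega_{0}^m$ for some primitive group $L$ on $\Omega_0$ with socle $S$, and $N = S^m$; applying Fein--Kantor--Schacher to $S$ on $\Omega_0$ yields a derangement $s \in S$ of prime power order, and choosing any $t \in S$ of prime order $p \in \pi(S) \setminus \{r\}$ (which exists because $\pi(S) = \pi(T)$ has size at least three), the element $(s,t,1,\dots,1) \in N$ is a derangement on $\Omega$ of order $\mathrm{lcm}(|s|,p)$, which is not an $r$-power.

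Finally, for the Frobenius statement in case (ii), assume $r \nmid |H|$, write $V = N$, and view $H \leqs \GL(V)$ acting naturally. If some $h \in H$ with $h \neq 1$ had eigenvalue $1$ on $V$, then $\mathrm{Im}(1-h)$ would be a proper subspace; picking $v \in V \setminus \mathrm{Im}(1-h)$, the element $g = vh \in V \rtimes H = G$ has no fixed point on $V$ (the equation $(1-h)w = v$ is unsolvable), so $g \in \Delta_H(G)$. But $g$ projects to $h$ in $G/V \cong H$, so $|h|$ divides $|g|$; since $|h|$ divides $|H|$ it is coprime to $r$, contradicting that $g$ is an $r$-element. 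Hence every nontrivial element of $H$ acts fixed-point-freely on $V \setminus \{0\}$, which is precisely the Frobenius condition for $G = V \rtimes H$ with kernel $V$ and complement $H$. The main obstacle is the nonabelian socle step: one must analyze $H \cap N$ carefully in each O'Nan--Scott type to verify that the proposed candidates are genuinely derangements (and not $G$-conjugate into $H$), but once this bookkeeping is in hand the universal prime-count $|\pi(T)| \geqs 3$ closes each case in essentially the same way.
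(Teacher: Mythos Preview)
Your proof is correct, but it takes a genuinely different route from the paper's argument for the nonabelian-socle case. You invoke the full O'Nan--Scott classification and exhibit explicit derangements of incompatible orders in each of the twisted wreath, diagonal, and product action types. The paper instead avoids naming O'Nan--Scott: it takes a maximal subgroup $T$ of $N$ containing $H\cap N$, finds two factors $S_1, S_2$ with $L = S_1 \times S_2 \not\leqs T$, and then reduces via Lemma~\ref{lem:normal} to analysing $\Delta_K(L)$ for a maximal subgroup $K$ of $L = S_1\times S_2$. By Th\'evenaz's classification of maximal subgroups of a direct product, $K$ is either a diagonal or of the form $K_1 \times S_2$, and in each case one quickly produces a derangement of non-prime-power order (using $|\pi(S)|\geqs 3$ and Fein--Kantor--Schacher, just as you do). So the paper's argument is more uniform and requires less external machinery---no case split over O'Nan--Scott types---whereas your approach is perhaps more transparent to a reader already fluent in the O'Nan--Scott taxonomy. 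For the Frobenius clause, the paper simply cites \cite[Lemma~4.1]{BTV}, while you give a clean self-contained argument via $v \notin \mathrm{Im}(1-h)$; your version is a nice addition. One small remark: in your diagonal case you argue about $G$-conjugates of $(t,1,\dots,1)$, but after invoking Lemma~\ref{lem:normal} you only need $N$-conjugacy, which makes the verification immediate (coordinate-wise conjugation preserves the support pattern).
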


\begin{proof} 
Let $N$ be a minimal normal subgroup of $G$, so $N\cong S_1\times S_2\times\cdots\times S_k$, where $S_i\cong S$ for some simple group $S$ and integer $k\geqs 1$. Then $G=HN$ and $N$ is transitive on $\Omega$. Let us assume that \eqref{e:star} holds.

First assume that $H\cap N=1$, so $N$ is regular and every nontrivial element in $N$ is a derangement. If $N$ is abelian, then we are in case (ii). Moreover, if $|H|$ is indivisible by $r$,  then $N$ is a Sylow $r$-subgroup of $G$ and thus $\Delta(G)\subseteq N$. In this situation,  \cite[Lemma 4.1]{BTV} implies that $G$ is a Frobenius group with kernel $N$ and complement $H$. Now, if $N$ is nonabelian then $S$ is a nonabelian simple group and thus $|S|$ is divisible by at least three distinct primes, whence $S$ (and thus $N$) contains derangements of distinct prime orders, which is incompatible with property \eqref{e:star}.

For the remainder, we may assume that $H\cap N$ is nontrivial. It follows that $N \cong S^k$, where $S$ is a nonabelian simple group and $k\geqs 1$.  If $k=1,$ then $G$ is almost simple and (i) holds. Therefore, we may assume that $k\geqs 2$. 
 
Let $T\leqs N$ be a maximal subgroup of $N$ containing $H\cap N$. By Lemma \ref{lem:normal}, we have $\Delta_T(N) \subseteq \Delta_H(G)$. Since $k\geqs 2$, there exist integers $i$ and $j$ such that  $1\leqs i<j \leqs k$ and $L:=S_i\times S_j\not\leqs T$. By relabelling the $S_{\ell}$, if necessary, we may assume that $L = S_1 \times S_2$. Now $L\normeq N$, so $N=TL$ and thus 
\begin{equation}\label{e:deltak}
\Delta_K(L)\subseteq \Delta_T(N) \subseteq\Delta_H(G)=\Delta(G),
\end{equation}
where $K$ is a maximal subgroup of $L$ containing $L\cap T$. Therefore, every derangement of $L=S_1\times S_2$ on the right cosets $L/K$ is an $r$-element.

By \cite[Lemma 1.3]{Thevenaz}, there are essentially two possibilities for $K$; either $K$ is a diagonal subgroup of the form $\{(s,\phi(s)) \,:\, s\in S_1\}$ for some isomorphism $\phi:S_1 \to S_2$, or $K$ is a standard maximal subgroup, i.e., $K=S_1\times K_2$ or $K_1\times S_2$, where $K_i<S_i$ is maximal. In the diagonal case, every element in $L$ of the form $(s,1)$ with $1\neq s\in S_1$ is a derangement on $L/K$. Clearly, this situation cannot arise. Now assume $K$ is a standard maximal subgroup. Without loss of generality, we may assume that $K=K_1\times S_2$, where $K_1$ is maximal in $S_1$. Let $s\in S_1$ be a derangement on $S_1/K_1$ of prime power order, say $p^e$ for some prime $p$ and integer $e\geqs 1$ (such an element exists by the main theorem of \cite{FKS}). Since
$|\pi(S)|\geqs 3$, choose $t\in S_2$ of prime order different from $p$.
Then $(s,t) \in L$ is a derangement on $L/K$ of non-prime power order, so once again we have reached a contradiction.
\end{proof}

\vs

This completes the proof of Theorem \ref{t:main1}. 

\section{Almost simple groups}\label{s:as}

In this section we prove Theorem \ref{t:main2}. We fix the following notation. Let $r$ be a prime and let $G \leqs \Sym(\Omega)$ be an almost simple primitive permutation group with socle $G_0$ and point stabilizer $H$. Set $H_0 = H \cap G_0$ and let $M$ be a maximal subgroup of $G_0$ containing $H_0$. 
As before, let $\Delta(G)$ be the set of derangements in $G$, and let $\Ed(G)$ be the set of orders of elements in $\Delta(G)$. By Lemma \ref{lem:normal}, we have
\begin{equation}\label{e:g0}
\Delta_{M}(G_0) \subseteq \Delta_{H_0}(G_0) \subseteq \Delta_{H}(G) = \Delta(G).
\end{equation}
Recall that if $X$ is a finite set, then $\pi(X)$ denotes the set of prime divisors of $|X|$.

Let us assume that \eqref{e:star} holds, so every derangement in $G$ is an $r$-element, for some fixed prime $r$. Clearly, every derangement of $G_0$ on $\Omega$ is also an $r$-element. Now, if $s \in \pi(G_0) \setminus \pi(M)$ then every nontrivial $s$-element in $G_0$ is a derangement, so $\pi(G_0)=\pi(M)$ or $\pi(M) \cup \{r\}$. In particular, if we set 
$\pi_0:=\pi(G_0)\setminus\pi(M)$, then $|\pi_0|\leqs 1$. 

\subsection{Sporadic groups}\label{ss:sporadic}

\begin{proposition}\label{p:spor}
Theorem \ref{t:main2} holds if $G_0$ is a sporadic group or the Tits group.
\end{proposition}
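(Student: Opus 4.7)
The proof is a case-by-case analysis over the $26$ sporadic simple groups and the Tits group, carried out using the \textsc{Atlas} and (for the larger socles) the character table library in \textsf{GAP}. The strategy is to first eliminate the vast majority of candidate pairs $(G_0,M)$ using the necessary condition $|\pi(G_0)\setminus\pi(M)|\leqs 1$ recorded immediately before the proposition, and then to verify the survivors by direct inspection of conjugacy class fusion.

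For the first reduction, when $\pi(M) = \pi(G_0)$ the possibilities are enumerated in \cite[Corollary 5]{LPS}, giving a short explicit list for each sporadic socle. When $\pi(G_0)\setminus\pi(M) = \{s\}$, the prime $s$ is forced to equal $r$ and must be an isolated vertex of the Gruenberg--Kegel graph of $G_0$; the candidates can therefore be read off from \cite[Tables 1-3]{KM}, paired with those maximal subgroups of $G_0$ containing a Hall $s'$-subgroup. The corresponding analysis for almost simple extensions $G$ with $G_0 \le G \le \Aut(G_0)$ is carried out using \cite{Lucido}. In every case $r$ is either fully determined ($r=s$) or confined to a small explicit set of primes.

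For each surviving triple $(G_0,M,r)$ we then check, using $\Delta_M(G_0)\subseteq\Delta(G)$ from \eqref{e:g0}, whether every conjugacy class of $G_0$ of non-$r$-power order meets some $G_0$-conjugate of $M$; equivalently, that the permutation character $1_M^{G_0}$ vanishes only on classes of $r$-power order. This is read off from the \textsc{Atlas} fusion maps or computed in \textsf{GAP}, and the same test is then applied to each almost simple overgroup $G$ and its point stabilizer $H$. All candidates are ruled out except for $(G,H) = ({\rm M}_{11},{\rm L}_{2}(11))$, the classical $3$-transitive action of ${\rm M}_{11}$ on $12$ points: here the element orders in $H$ and $G$ are $\{1,2,3,5,6,11\}$ and $\{1,2,3,4,5,6,8,11\}$ respectively, so $\Delta(G)$ consists precisely of the elements of order $4$ and $8$, giving $r=2$ and $\Ed(G) = \{4,8\}$, in agreement with Table \ref{tab:main}.

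The main obstacle is simply the volume of data to check for the larger sporadic groups (notably ${\rm Fi}_{24}'$, the Baby Monster and the Monster), but the $\pi$-reduction is extremely strong and eliminates almost every pair at the outset, so that only a handful of triples reach the class-fusion step, each of which is settled by a short finite computation.
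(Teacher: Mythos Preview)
Your overall strategy coincides with the paper's: reduce via the condition $|\pi(G_0)\setminus\pi(M)|\leqs 1$, then test each survivor by checking where the permutation character $1_M^{G_0}$ vanishes. The paper does this entirely in \textsf{GAP} (listing the survivors in Table~\ref{Tab:sporadic}) rather than invoking \cite{LPS} and \cite{KM}, but this is a cosmetic difference. Two points in your write-up, however, are genuine gaps.

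First, in the case $\pi(G_0)\setminus\pi(M)=\{s\}$ your filter is wrong. You need all maximal $M$ with $\pi(M)=\pi(G_0)\setminus\{s\}$, not those containing a Hall $s'$-subgroup of $G_0$; the latter condition forces $|G_0:M|$ to be an $s$-power and is far too restrictive. For example, in ${\rm M}_{22}$ with $s=11$ a Hall $11'$-subgroup has order $40320$, yet the relevant maximal subgroups $\Alt_7$ and ${\rm L}_3(4)$ (both appearing in Table~\ref{Tab:sporadic}) have orders $2520$ and $20160$. Your criterion would discard these and many other legitimate candidates. (Your claim that $s=r$ must be isolated in $\Gamma(G_0)$ is correct: if $G_0$ contained an element of order $rt$ with $t\ne r$ prime, that element would lie in $\Delta_M(G_0)$ since $r\notin\pi(M)$.)

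Second, you cannot simply fold the Monster into the general \textsf{GAP} computation, because its maximal subgroups are not all known. The paper handles $\mathbb{M}$ separately: one checks the $44$ known classes of maximal subgroups together with the finitely many possible further almost simple maximals (socle ${\rm L}_2(13)$, ${\rm U}_3(4)$, ${\rm U}_3(8)$ or ${}^2{\rm B}_2(8)$), verifying $|\pi_0|\geqs 2$ in every case. You need to say this explicitly.
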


\begin{proof}
First assume that $G_0$ is not the Monster. The maximal subgroups of $G_0$ are available in \textsf{GAP} \cite{GAP}, and it is easy to identify the cases $(G_0,M)$ with $|\pi_0| \leqs 1$. For the reader's convenience, the cases that arise are listed in Table \ref{Tab:sporadic}. We now consider each of these cases in turn.  With the aid of \textsf{GAP} \cite{GAP}, we can compute the permutation character $\chi=1_M^{G_0}$, and we observe that 
$$\Delta_M(G_0) = \{x \in G_0 \,:\, \chi(x)=0\}.$$ 
In this way, we deduce that property \eqref{e:star} holds if and only if 
$(G_0,M)=({\rm M}_{11},{\rm L}_2(11))$. Here $\pi(M) = \pi(G_0)$, $G = {\rm M}_{11}$, $H = {\rm L}_{2}(11)$ and $\Ed(G) = \{4,8\}$. This case is recorded in Table \ref{tab:main}.

Now assume $G = \mathbb{M}$ is the Monster. As noted in \cite{BW,NW}, there are $44$ conjugacy classes of known maximal subgroups of $\mathbb{M}$ (these subgroups are conveniently listed in \cite[Table 1]{BW}, together with ${\rm L}_2(41)$). Moreover, it is known that any additional maximal subgroup of $\mathbb{M}$ is almost simple with socle ${\rm L}_2(13),{\rm U}_3(4),{\rm U}_3(8)$ or ${}^2{\rm B}_2(8)$. It is routine to check that $|\pi_0|\geqs 2$ in each of these cases.
\end{proof}

\renewcommand{\arraystretch}{1.1}
\begin{table}
$$\begin{array}{lll} \hline
G_0 & M & \pi_0 \\ \hline
    
    {\rm M}_{11}& \Alt_6.2_3,\SSS_5&11\\

                  & {\rm L}_2(11)&-\\
     {\rm M}_{12}& {\rm M}_{11},{\rm L}_{2}(11)&-\\ 
     & \Alt_6.2^2, 2\times\SSS_5&11\\            
                  
       {\rm M}_{22}& \Alt_7, {\rm L}_3(4)&11\\ 
       & {\rm L}_2(11)&7\\
        {\rm M}_{23}& {\rm M}_{22}&23\\    
        {\rm M}_{24}& {\rm M}_{23}&-\\ 
        & {\rm M}_{22}.2 &23\\  
        
         {\rm J}_{2}& {\rm L}_3(2).2, {\rm U}_3(3) &5\\ 
         & 3.\Alt_6.2_2, 2^{1+4}{:}\Alt_5, \Alt_4\times\Alt_5,\Alt_5\times {\rm D}_{10},5^2{:}{\rm D}_{12},\Alt_5&7\\ 
         
         {\rm J}_{3}& {\rm L}_2(16).2&19\\ 
         & {\rm L}_2(19)&17\\ 
         
          {\rm Co}_{1}& 3.{\rm Suz}.2&23\\  
          & {\rm Co}_2,{\rm Co}_3,2^{11}{:}{\rm M}_{24}&13\\
          
          {\rm Co}_{2}& {\rm M}_{23}&-\\  
                    & {\rm McL},{\rm HS}.2,{\rm U}_6(2).2,2^{10}{:}{\rm M}_{22}.2 &23\\  
          {\rm Co}_{3}& {\rm M}_{23}&-\\  
                              & {\rm McL}.2,{\rm HS}&23\\
           {\rm Fi}_{22}& 2.{\rm U}_6(2),2^{10}{:}{\rm M}_{22}&13\\  
                     & \Omega_7(3)&11\\                   
                               
           {\rm Fi}'_{24}& {\rm Fi}_{23}&29\\                   
          {\rm HS}& {\rm M}_{22}&-\\
          & {\rm U}_3(5).2,{\rm L}_3(4).2_1,\SSS_8&11\\ 
           &{\rm M}_{11}&7\\ 
           
           {\rm McL}& {\rm M}_{22}&-\\
                     & {\rm U}_4(3),{\rm U}_3(5),{\rm L}_3(4).2_2, 2.\Alt_8, 2^4{:}\Alt_7&11\\ 
                      &{\rm M}_{11}&7\\ 
                      
           {\rm Suz}&{\rm G}_2(4)&11\\

            {\rm He}& {\rm Sp}_4(4).2&7\\  

            &2^2.{\rm L}_3(4).\SSS_3, 3.\SSS_7&17\\ 
            {\rm HN}&2.{\rm HS}.2,\Alt_{12}&19\\ 
            
             {\rm O'N}&{\rm J}_1&31\\ 
             
              {\rm Ru}&(2^2\times {}^2{\rm B}_2(8)){:}3 &29\\   
              &{\rm L}_2(29)&13\\
              {}^2{\rm F}_4(2)'&{\rm L}_2(25)&-\\ 
              &{\rm L}_3(3).2&5\\  
              &\Alt_6.2^2,5^2{:}4\Alt_4&13\\ 
              \hline  
  \end{array}$$
  \caption{Maximal subgroups of sporadic simple groups, $|\pi_0| \leqs 1$}
  \label{Tab:sporadic}
\end{table}
\renewcommand{\arraystretch}{1}

\subsection{Alternating groups}\label{ss:alt}

\begin{proposition}\label{p:alt}
Theorem \ref{t:main2} holds if $G_0 = \Alt_n$ is an alternating
group.
\end{proposition}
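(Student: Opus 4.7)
The approach is to dispose of the small cases $n \leqs 8$ by explicit computation and by appeal to exceptional isomorphisms, and then to rule out $n \geqs 9$ entirely by producing, for every maximal subgroup $M$ of $\Alt_n$, either two derangements of distinct prime orders or a single derangement whose order has two prime divisors. The analysis for $n \geqs 9$ is organised by the O'Nan-Scott classification of maximal subgroups of $\Alt_n$.

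First I would handle $n \in \{5,6\}$: the isomorphisms $\Alt_5 \cong \PSL_2(4) \cong \PSL_2(5)$ and $\Alt_6 \cong \PSL_2(9)$ mean that every almost simple primitive action with socle $\Alt_n$ in this range is either recorded or excluded in the $\PSL_2(q)$ rows of Table~\ref{tab:main}, in accordance with Remark~\ref{r:isom}(i). For $n \in \{7,8\}$ the maximal subgroups of $\Alt_n$ and $\Sym_n$ are explicit and few; for each, one computes $\Delta_M(G_0)$ via the permutation character $1_M^{G_0}$ in \textsf{GAP}, exactly as in the proof of Proposition~\ref{p:spor}, and verifies that no new examples arise (in particular, the isomorphism $\Alt_8 \cong {\rm L}_4(2)$ yields no entry in the table).

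For $n \geqs 9$, let $M$ be a maximal subgroup of $G_0 = \Alt_n$ containing $H_0$; by O'Nan-Scott, $M$ is intransitive, imprimitive, or primitive on $\{1, \ldots, n\}$. In the intransitive case $M = (\Sym_k \times \Sym_{n-k}) \cap \Alt_n$ with $1 \leqs k \leqs n/2$: for $k = 1$ (natural action), one inspects cycle types and exhibits, for every $n \geqs 6$, two fixed-point-free elements of coprime prime order (for example a product of $3$-cycles of order $3$ alongside a $(p,n-p)$-cycle for a suitable prime $p$, or by comparing an $n$-cycle with another fixed-point-free type); for $k \geqs 2$ one combines two primes $p \neq q$ in $(n-k, n]$, guaranteed by Bertrand-type estimates for most $(n,k)$, with direct cycle-type constructions (e.g.\ $(3,n-3)$- or $(5,n-5)$-cycles) for those small $(n,k)$, notably $k \in \{2,3\}$, where Bertrand furnishes only one prime in the window. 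In the imprimitive case $M = (\Sym_a \wr \Sym_b) \cap \Alt_n$, $n = ab$, $a,b \geqs 2$: every prime divisor of $|M|$ is at most $\max(a,b) \leqs n/2$, so every prime in $(n/2, n]$ lies in $\pi_0$; for $n \geqs 11$ there are at least two such primes, and the cases $n \in \{9,10\}$ are settled by a small direct check (for instance, a product of three disjoint $3$-cycles in $\Alt_{10}$ is a derangement on $(5,5)$-partitions of non-$r$-power order). In the primitive case ($M$ primitive on $\{1, \ldots, n\}$ and $M \neq \Alt_n$), Jordan's theorem on primitive groups forces $M$ to contain no $p$-cycle for any prime $p$ with $p \leqs n-3$; since the Sylow $p$-subgroup of $\Sym_n$ is cyclic of order $p$ for $p > n/2$, it follows that every prime $p$ with $n/2 < p \leqs n-3$ lies in $\pi_0$. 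This yields two primes in $\pi_0$ as soon as $n \geqs 16$, and the residual range $9 \leqs n \leqs 15$ is dispatched by consulting the list of primitive groups of small degree in the \textsf{GAP} library and checking each possibility individually.

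The main obstacle is the finite but non-trivial verification in the primitive case for $9 \leqs n \leqs 15$, where prime-counting inequalities are tight and one must enumerate the short list of primitive maximal subgroups of $\Alt_n$ and confirm that $\Delta_M(G_0)$ always contains elements violating \eqref{e:star}; a secondary hurdle is the intransitive case with $k \in \{2, 3\}$, where Bertrand's postulate alone does not supply two primes and one must instead construct explicit derangements of incompatible orders from the cycle-type structure of $\Alt_n$.
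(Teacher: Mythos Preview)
Your approach is correct in outline, but the paper's proof is considerably shorter because it avoids the O'Nan--Scott trichotomy altogether. Instead of treating the intransitive, imprimitive, and primitive cases for $M$ separately, the paper exploits Lemma~\ref{lem:power order}(ii): for $n\geqs 12$, take a prime $s$ with $n/2<s<n-2$ (Bertrand) and observe that an $s$-cycle $x$ has $\Centralizer_G(x)$ containing both $\langle x\rangle$ and the alternating group on the $n-s\geqs 3$ fixed points, so $\Centralizer_G(x)$ is never an $r$-group for any prime $r$. Hence $x$ cannot be a derangement, so $H$ contains an $s$-cycle with $s\leqs n-3$; Jordan's theorem then forces $H$ to be intransitive or imprimitive on $\{1,\ldots,n\}$, and $s>n/2$ eliminates the imprimitive case in one line. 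This reduces everything to $H$ a $k$-set stabilizer, which the paper finishes with just two short cycle-type arguments (for $n$ even, at least two of the elements with shape $(i,n-i)$, $i\in\{3,5,7\}$, are derangements of incompatible orders; for $n$ odd, an $n$-cycle forces $n=r^a$, an $((n-1)/2,(n-1)/2,1)$-element pins down $k$, and a $(2,3,n-5)$-element gives the contradiction). Your route works but requires the residual computations for primitive $M$ with $9\leqs n\leqs 15$ and the ad hoc constructions for small $k$ in the intransitive case, all of which the centralizer trick sidesteps. The paper also pushes the computational cutoff up to $n<12$ rather than $n\leqs 8$, which is harmless.
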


\begin{proof}
If $n< 12$ then the result can be checked directly using \textsf{GAP} \cite{GAP}; the only cases $(G,H)$ with property \eqref{e:star} are the following:
$$(\Alt_6, 3^2{:}4), \, (\Alt_5, {\rm D}_{10}), \, (\Alt_5, \Alt_4), \, (\Alt_5, {\rm S}_{3}),$$
which are recorded in Table \ref{tab:main} as
$$({\rm L}_{2}(9),{\rm P}_{1}),\, ({\rm L}_{2}(4),{\rm D}_{10}),\, ({\rm L}_{2}(4),{\rm P}_{1}),\, ({\rm L}_{2}(4),{\rm D}_{6})$$
respectively (see Remark \ref{r:isom}). For the remainder, we may assume that $n \geqs 12$. 
Seeking a contradiction, let us assume that there is 
a fixed prime $r$ such that every derangement in $G$ is an $r$-element. 

Let $s$ be a prime such that $n/2<s<n-2$ and let $x \in G_0$ be an $s$-cycle (such a prime exists by \emph{Bertrand's postulate}). Since $\Centralizer_G(x)$ is not an $r$-group, Lemma \ref{lem:power order}(ii) implies that $x$ is not a derangement and thus $H$ contains $s$-cycles. By applying a well known theorem of Jordan (see \cite[Theorem 13.9]{Wielandt}), we deduce that $H$ is either intransitive or imprimitive, and we can rule out the latter possibility since $s$ divides $|H|$. Therefore, $H$ is the stabilizer of a $k$-set for some $k$ with $1<k<n/2$.

Suppose $n$ is even and let $x_i \in G_0$ be an element with cycles of length $i$ and $n-i$ for $i \in \{3,5,7\}$. Then at least two of the $x_i$ are derangements, so we have reached a contradiction. Now assume $n$ is odd. An $n$-cycle does not fix a $k$-set, so $n$ must be an $r$-power. Therefore, any element with cycles of length $(n-1)/2$, $(n-1)/2$ and $1$ must fix a $k$-set (since its order is not an $r$-power), so $k=1$ or $(n-1)/2$. It follows that any element with cycles of length $2,3$ and $n-5$ is a derangement, and this final contradiction completes the proof of the proposition.
\end{proof}

\subsection{Exceptional groups}\label{ss:ex}

Now let us assume that $G_0$ is a simple exceptional group of Lie type over $\mathbb{F}_{q}$, where $q=p^f$ and $p$ is a prime. For $x \in G_0$, let $\mathcal{M}(x)$ be the set of maximal subgroups of $G_0$ containing $x$. We will write $\Phi_i$ for the $i$-th cyclotomic polynomial evaluated at $q$, so $q^n - 1 = \prod_{d|n}\Phi_d$. Recall that if $e \geqs 2$ and $q^e-1$ has a primitive prime divisor, then we use the notation $\ell_{e}(q)$ to denote the largest such divisor of $q^e-1$.

\begin{proposition}\label{p:ex}
Theorem \ref{t:main2} holds if $G_0$ is a simple exceptional group of Lie type.
\end{proposition}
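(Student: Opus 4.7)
The plan is to verify that no simple exceptional group $G_0$ over $\mathbb{F}_q$ contributes a case to Table \ref{tab:main}. By Lemma \ref{lem:power order}(i), it suffices to show that $|\pi_0| \geqs 2$ for every maximal subgroup $M$ of $G_0$, where $\pi_0 = \pi(G_0) \setminus \pi(M)$. The main tool will be Zsigmondy's theorem applied to the factorisation of $|G_0|$ as a product of $q^N$ with cyclotomic polynomials $\Phi_d = \Phi_d(q)$, which provides a rich supply of primitive prime divisors $\ell_d(q)$ dividing $|G_0|$.

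For each possible socle $G_0 \in \{{}^2B_2(q), {}^2G_2(q), G_2(q), {}^3D_4(q), {}^2F_4(q), F_4(q), E_6^{\ep}(q), E_7(q), E_8(q)\}$ (the Tits group having been handled in Proposition \ref{p:spor}), I would first list the cyclotomic factors of $|G_0|$ and identify the corresponding Zsigmondy primes $\ell_d(q)$. I would then appeal to the classification of maximal subgroups of exceptional groups (principally due to Liebeck and Seitz, with additional contributions for the smaller groups from Kleidman, Suzuki, Malle and others) to enumerate the possibilities for $M$. These divide naturally into parabolic subgroups, reductive subgroups of maximal rank, and maximal subgroups of comparatively small Lie rank (almost simple, or with socle a product of classical groups of bounded dimension). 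In each case, $|M|$ involves only a proper subset of the cyclotomic polynomials dividing $|G_0|$, so typically at least two primitive primes $\ell_d(q)$ fail to divide $|M|$. For instance, when $M$ is a maximal parabolic, the index $|G_0:M|$ factorises as an explicit product of terms $q^i - 1$ indexed by the simple roots outside $M$, from which two missing primitive primes can be read off directly; for reductive subgroups of maximal rank, one identifies the subsystem of the root system and checks which $\Phi_d$ are omitted.

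The main obstacle will be the low-rank and small-field cases, particularly ${}^2B_2(q)$, ${}^2G_2(q)$, ${}^2F_4(q)$, and ${}^3D_4(q)$, where the order formula involves comparatively few cyclotomic factors and Zsigmondy's theorem admits exceptions. For these groups, together with the larger exceptional groups over very small $q$ (such as $q \in \{2,3\}$), I would invoke the explicit lists of maximal subgroups and, where necessary, verify $|\pi_0| \geqs 2$ by direct computation in \textsf{GAP} \cite{GAP} or by inspection of the Atlas. A further technical point is that a primitive prime $\ell_d(q)$ may coincide with a small prime dividing $|M|$; any such coincidence has to be handled by exhibiting an alternative prime in $\pi(G_0) \setminus \pi(M)$, and the richness of the cyclotomic factorisation of $|G_0|$ ensures that such an alternative is always available in the cases that survive the initial Zsigmondy screening.
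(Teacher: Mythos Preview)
Your plan has a genuine gap: the condition $|\pi_0| \geqs 2$ is \emph{not} satisfied by every maximal subgroup of every simple exceptional group, and the failures are not confined to small-field exceptions. For instance, take $G_0 = G_2(q)$ and $M = {\rm SL}_3(q).2$. Then $\pi_0$ consists of the primes dividing $|G_0|/|M|$ that miss $|M|$, and one computes that these are exactly the primes dividing $q^2-q+1$ but not $q^2-1$. Whenever $\Phi_6 = q^2-q+1$ is a prime (which happens for $q = 3,4,5,8,\ldots$, conjecturally infinitely often), $|\pi_0| = 1$. Similarly, $G_2(q) < {}^3{\rm D}_4(q)$ gives $|\pi_0| \leqs 1$ for all $q$ with $\Phi_{12}$ a prime power (e.g.\ $q=2,3$), and analogous families arise in the larger groups. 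Your proposed verification would therefore simply fail on these subgroups, not because of arithmetic coincidences to be patched, but because the claim you are trying to verify is false.

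The paper's proof takes a genuinely different route. Rather than counting primes outside $\pi(M)$, it exhibits explicit derangements in $G_0$ whose orders are not powers of a single prime. The key tool is knowledge of $\mathcal{M}(x)$, the set of maximal overgroups of a carefully chosen torus element $x$; for most exceptional groups one picks two regular semisimple elements $x_1,x_2$ lying in cyclic maximal tori of coprime orders, with $\mathcal{M}(x_i)$ consisting of a single (torus normaliser) subgroup. Since no maximal $H$ contains both, one of the $x_i$ is a derangement, pinning $H$ to a short list; one then produces a further derangement of order coprime to $|x_i|$. The point is that this second derangement may well have order whose prime divisors lie in $\pi(H)$ --- the derangement condition is about conjugacy classes meeting $H$, which is strictly finer than the prime-divisor criterion you propose to use.
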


\begin{proof} 
Recall the notational set-up introduced at the beginning of Section \ref{s:as}: $H$ is a point stabilizer in $G$, and $H_0 = H \cap G_0$. In view of \eqref{e:g0}, 
in order to show that \eqref{e:star} does not hold we may assume that $G=G_0$. Seeking a contradiction, suppose that every derangement in $G$ is an $r$-element, for some fixed prime $r$. We will consider each possibility for $G$ in turn. 

\vs

\noindent \emph{Case 1.} $G={}^2{\rm B}_2(q)$, with $q=2^{2m+1}$ and $m\geqs 1$.

\vs

Let $\Phi_4'=q+\sqrt{2q}+1$ and $\Phi_4''=q-\sqrt{2q}+1$ (note that $\Phi_4'\Phi_4'' = q^2+1$). By inspecting \cite[Table II]{BPS}, \cite[Table 6]{GM1} and \cite[Table 1]{GM2}, we see that $G$ has two cyclic maximal tori $T_i=\la x_i\ra$, $i=1,2$, of order $\Phi_4'$ and $\Phi_4''$, respectively, such that $|\Normalizer_{G}(T_i)/T_i|=4$, $(|x_1|,|x_2|)=1$ and $\M(x_i)=\{\Normalizer_{G}(T_i)\}$. Since no maximal subgroup of $G$ can contain conjugates of both $x_1$ and $x_2$, it follows that  $x_i^{G}\cap H$ is empty for some $i=1,2$. Therefore, $x_i\in\Delta(G)$ and thus $|x_i|$ is a power of $r$. Let $j=3-i$. Then $|x_j|$ is indivisible by $r$, so $H$ contains a conjugate of $x_j$ and thus $H=\Normalizer_{G}(T_j)$ is the only possibility (up to conjugacy). Now $G$ has a cyclic maximal torus of order $q-1$, so let $x \in G$ be an element of order $q-1\geqs 7$. Since $|H|$ is indivisible by $q-1$, it follows that $x \in \Delta(G)$. But $r$ does not divide $q-1$, so we have reached a contradiction.

\vs

\noindent \emph{Case 2.} $G = {}^2{\rm G}_2(q)$, with $q=3^{2m+1}$ and $m\geqs 1$. 

\vs

This is very similar to the previous case. Here we take two cyclic maximal tori $T_i=\la x_i\ra$, $i=1,2$, of order $\Phi_{6}' = q+\sqrt{3q}+1$ and $\Phi_{6}''= q-\sqrt{3q}+1$, respectively, such that $|\Normalizer_{G}(T_i)/T_i|=6$, $(|x_1|,|x_2|)=1$ and $\M(x_i)=\{\Normalizer_{G}(T_i)\}$. Note that $\Phi_{6}'\Phi_{6}'' = q^2-q+1$. By arguing as in Case 1, we deduce that $|x_i|$ is a power of $r$ and $H=\Normalizer_{G}(T_j)$ for some distinct $i,j$. Let $x \in G$ be an element of order $9$ (see part (2) in the main theorem of \cite{Ward}, for example). Since $|H|$ is indivisible by $9$, it follows that $x$ is a derangement, but this is a contradiction since $r \neq 3$. 

\vs

\noindent \emph{Case 3.} $G = {}^2{\rm F}_4(q)$, with $q=2^{2m+1}$ and $m \geqs 1$.

\vs

Again, we proceed as in Case 1. Here $G$ has two cyclic maximal tori $T_i=\la x_i\ra$, $i=1,2$, where
\begin{align*}
|T_1| & = \Phi_{12}' = q^2 + \sqrt{2q^3}+q+\sqrt{2q}+1 \\
|T_2| & = \Phi_{12}'' = q^2 - \sqrt{2q^3}+q-\sqrt{2q}+1
\end{align*}
and $|\Normalizer_{G}(T_i)/T_i|=12$, $(|x_1|,|x_2|)=1$ and $\M(x_i)=\{\Normalizer_{G}(T_i)\}$. Note that $\Phi_{12}'\Phi_{12}'' = q^4-q^2+1$. As in Case 1, we see that $|x_i|$ is a power of $r$ and $H=\Normalizer_{G}(T_j)$ for some distinct $i,j$. Let $x \in G$ be an element of order $\ell_4(q)$. Since $|H|$ is indivisible by $\ell_4(q)$, it follows that $x \in \Delta(G)$, but this is a contradiction since $r \neq \ell_4(q)$.

\vs

\noindent \emph{Case 4.} $G = {\rm E}_8(q)$.

\vs

Again, we can proceed as in the previous cases, working with cyclic maximal tori $T_1,T_2$ and an element $x \in G$ of order $\ell_{24}(q)$, where
\begin{align*}
|T_1| & =\Phi_{15} = q^8-q^7+q^5-q^4+q^3-q+1 \\
|T_2| & =\Phi_{30} = q^8+q^7-q^5-q^4-q^3+q+1
\end{align*}
and $|\Normalizer_{G}(T_i)/T_i|=30$, $i=1,2$. We omit the details (note that $\ell_{24}(q)\in \pi(G)\setminus \pi(\Normalizer_{G}(T_i))$).

\vs

\noindent \emph{Case 5.} $G = {}^3{\rm D}_4(q)$.

\vs
 
As indicated in \cite[Table 6]{GM1}, $G$ has a maximal torus $T=\la x\ra$ of order $\Phi_{12} = q^4-q^2+1$ such that $|\Normalizer_{G}(T)/T|=4$ and $\M(x)=\{\Normalizer_{G}(T)\}$. 

Suppose that $x \not\in \Delta(G)$. Then $x^{G}\cap H$ is non-empty, and without loss of generality we may assume that $x \in H$ and thus $H=\Normalizer_{G}(T)$. If $q=2$ then 
$|H|=52$ and $|\pi(G)\setminus\pi(H)|=2$, so we must have $q>2$. Let $y_i \in G$ ($i=1,2$) be elements of order $\ell_i:=\ell_{m_i}(q)\geqs 5$, where $m_1=3$ and $m_2=6$. Since $|H|$ is indivisible by $\ell_1$ and $\ell_2$, it follows that $y_1,y_2 \in \Delta(G)$. But this is a contradiction since $\ell_1,\ell_2$ are distinct primes.  

Now assume that $x\in\Delta(G)$, so $|x|=\Phi_{12}$ is a power of $r$. If $q=2$ then $r=13$ and $H$ must contain elements of order $7,8,9,14,18,21$ and $28$, but no maximal subgroup of $G$ has this property (see \cite{ATLAS}, for example). Therefore, $q>2$. 
Following \cite[p.698]{GM2}, let $y \in G$ be an element of order $\Phi_3$ such that $|\Centralizer_{G}(y)|$ divides $\Phi_3^2$ and 
\[\mathcal{M}(y)=\{G_2(q),{\rm PGL}_3(q),(\Phi_6\circ \SL_3(q)).2d,\Phi_3^2.\SL_2(3)\},\]
where $d = (3,\Phi_3)$. Now $(\Phi_{12},\Phi_3) = 1$, so $y \not\in \Delta(G)$ and thus we may assume that $H \in \mathcal{M}(y)$. Let $z \in G$ be an element of order $\Phi_1\Phi_2\Phi_6 = (q^2-1)(q^2-q+1)$. Then $|H|$ is indivisible by $|z|$, so $z \in \Delta(G)$. But this is a contradiction since $(\Phi_{12}, \Phi_1\Phi_2\Phi_6) = 1$.

\vs

\noindent \emph{Case 6.} $G = {}^2{\rm E}_6(q)$.

\vs

Let $d=(3,q+1)$. As indicated in \cite[Table 6]{GM1} and \cite[Table 1]{GM2}, $G$ has two cyclic maximal tori  $T_i=\la x_i\ra$, $i=1,2$, of order $\Phi_{18}/d$ and $\Phi_6\Phi_{12}/d$, respectively. Then $(|x_1|,|x_2|)=1$ and  
$$\M(x_1)=\{ \SU_3(q^3).3 \},\;\; \M(x_2)=\left\{\begin{array}{ll}
\{\Phi_6.{}^3{\rm D}_4(q).3/d\} & \mbox{if $q>2$} \\
\{\Phi_6.{}^3{\rm D}_4(2), {\rm F}_4(2), {\rm Fi}_{22}\} & \mbox{if $q=2$.}
\end{array}\right.$$
No maximal subgroup of $G$ contains both $x_1$ and $x_2$ (see \cite[Table 10.5]{LPS}), so $x_i \in \Delta(G)$ for some $i$, and thus $|x_i|$ is a power of $r$.

First assume that $q=2$, so $|x_1|=19$, $|x_2|=13$ and thus $r \in \{13,19\}$.  If $r=13$, then $H$ contains a conjugate of $x_1$, so $H = \SU_3(8).3$ is the only option, but this is not possible since $|\pi(G) \setminus \pi(H)| = 4$. Similarly, if $r=19$ then $H \in \M(x_2)$ must contain elements of order $11,13$ and $17$, but it is easy to check that this is not the case.

Now assume that $q>2$. Let $x \in G$ be an element of order $\ell_{10}(q)$. Both $|\SU_3(q^3).3|$ and $|\Phi_6.{}^3{\rm D}_4(q).3/d|$ are indivisible by $\ell_{10}(q)$, so $x \in \Delta(G)$. However, this is not possible since $\ell_{10}(q)$ and $|x_i|$ are coprime.

\vs

\noindent \emph{Case 7.} $G = {\rm G}_2(q)$, $q \geqs 3$. 

\vs

We can use \textsf{GAP} \cite{GAP} to rule out the cases $q \leqs 5$, so we may assume that $q \geqs 7$.

First assume that $q=7$. By inspecting \cite[Table 6]{GM1} and \cite[Table 1]{GM2}, we see that $G$ has two cyclic maximal tori $T_i=\la x_i\ra$, $i=1,2$, of order $\Phi_6 = 43$ and $\Phi_3 = 57$, respectively, with $\M(x_1)=\{\SU_3(7).2\}$ and $\M(x_2)=\{\SL_3(7).2\}$. 
From \cite[Table 10.5]{LPS}, it follows that $x_i\in\Delta(G)$ for some $i$, so $H$ contains a conjugate of $x_j$, where $j=3-i$. Therefore, $H = \SL_3^\ep(7).2$ for some $\ep=\pm$. As noted in \cite[Table A.7]{KS}, $G$ contains elements of order $7^2+7=56$ and $7^2+7+1=57$. Now $\SU_3(7).2$ contains no element of order $57$, and $\SL_3(7).2$ has no element of order $56$. Therefore, $G$ always contains a derangement of non-prime power order, which is a contradiction.

For the remainder, we may assume that $q> 7$. We use the set-up in \cite[Section 5.7]{FMP}. Choose a $4$-tuple $(k_1,k_2,k_3,k_6)$ such that $(k_1,k_2)=1$, $k_i$ divides $\Phi_i$ for $i \in \{1,2\}$, $k_3=\Phi_3/(3,\Phi_3)$ and $k_6=\Phi_6/(3,\Phi_6)$. Note that the numbers  $k_1,k_2,k_3$ and $k_6$ are pairwise coprime. Let $y_1\in G$ be an element of order $k_6$, and fix a regular semisimple element $y_2\in G$ of order $k_1$. Similarly, fix $z_i\in G$, $i=1,2$, where $|z_1|=k_3$ and $z_2$ is a regular semisimple element of order $k_2$. 

From \cite[Table 10.5]{LPS}, it follows that either $y_1$ or $z_1$ is a derangement. Suppose that $y_1\in\Delta(G)$. Then $H$ contains a conjugate of $z_1$, so \cite[Lemma 5.27]{FMP} implies that $H = \SL_3(q).2$ is the only possibility. If $H$ also contains a conjugate of $z_2$, then $H=G$ by \cite[Corollary 5.28]{FMP}, a contradiction. Therefore $z_2\in\Delta(G)$, but once again we reach a contradiction since $(k_2,k_6)=1$. An entirely similar argument applies if $z_1\in\Delta(G)$.

\vs

\noindent \emph{Case 8.} $G \in \{{\rm E}_{6}(q), {\rm E}_7(q)\}$. 

\vs

First assume that $G = {\rm E}_{7}(q)$. Let $d=(2,q-1)$. As in \cite[Section 5.2]{FMP}, let $y_1,y_2\in G$ be elements of order 
$\Phi_{18}$ and $\Phi_2\Phi_{14}/d = (q^7+1)/d$, respectively, and let $z_1,z_2\in G$ be elements of order $\Phi_9$ and $\Phi_1\Phi_7/d = (q^7-1)/d$, respectively. From \cite[Corollary 5.6]{FMP}, we deduce that $y_i,z_j\in\Delta(G)$ for some $i,j \in \{1,2\}$. However, it is easy to check that $(|y_i|,|z_j|) = 1$ for all $i,j$, so this is a contradiction.

The case $G = {\rm E}_6(q)$ is entirely similar, using \cite[Corollary 5.11]{FMP} and elements $y_i,z_i \in G$ with $|y_1| = \Phi_9/d$, $|y_2| = \Phi_4$, $|z_1| = \Phi_3\Phi_{12}$ and $|z_2| = \Phi_5$ (where $d = (3,q-1)$).

\vs

\noindent \emph{Case 9.} $G = {\rm F}_4(q)$. 

\vs

For $q>2$, we can proceed as in Case 8, using the information in \cite[Section 5.5]{FMP}. The reader can check the details. 

Now assume that $q=2$. By inspecting \cite[Table 6]{GM1} and \cite[Table 1]{GM2}, we see that $G$ has two cyclic maximal tori $T_i=\la x_i\ra$, $i=1,2$, of order $\Phi_{12} = 13$ and 
$\Phi_8=17$, respectively, such that $\M(x_1)=\{{}^3{\rm D}_4(2).3,{}^2{\rm F}_4(2), {\rm L}_4(3).2_2\}$ and $\M(x_2)=\{{\rm Sp}_8(2)\}$. Therefore, $r \in \{13,17\}$. If $r=13$, then $H$ contains a conjugate of $x_2$, so $H = {\rm Sp}_{8}(2)$. However, \cite{ATLAS} indicates that $G$ has an element of order $28$, but ${\rm Sp}_8(2)$ does not, so this case is ruled out.   Therefore, $r=17$ and $H$ contains a conjugate of $x_1$, so $H\in \M(x_1)$. However, in each case one can check that $H$ does not contain an element of order $30$, but $G$ does. This final contradiction eliminates the case $G = {\rm F}_{4}(q)$.

\vs

This completes the proof of Proposition \ref{p:ex}.
\end{proof}

\subsection{Classical groups}\label{ss:class}

In order to complete the proof of Theorem \ref{t:main2}, we may assume that $G_0$ is a classical group over $\mathbb{F}_{q}$. Due to the existence of certain exceptional isomorphisms involving low-dimensional classical groups (see \cite[Proposition 2.9.1]{KL}, for example), and in view of our earlier work in Sections \ref{ss:sporadic}, \ref{ss:alt} and  \ref{ss:ex}, we may assume that $G_0$ is one of the groups listed in Table \ref{tab:gps}.

\renewcommand{\arraystretch}{1.1}
\begin{table}[h]
$$\begin{array}{ll} \hline
G_0 & \mbox{Conditions} \\ \hline
{\rm L}_{n}(q) & n \geqs 2,\, (n,q) \neq (2,2), (2,3), (2,4), (2,5), (2,9), (3,2), (4,2) \\
{\rm U}_{n}(q) & n \geqs 3,\, (n,q) \neq (3,2) \\
{\rm PSp}_{n}(q) & \mbox{$n \geqs 4$ even, $(n,q) \neq (4,2), (4,3)$} \\
{\rm P\Omega}_{n}^{\e}(q) & n \geqs 7 \\ \hline
\end{array}$$
\caption{Finite simple classical groups}\label{tab:gps} 
\end{table}
\renewcommand{\arraystretch}{1}

We will focus initially on the low-dimensional classical groups with socle ${\rm L}_{2}(q)$ and ${\rm L}_{3}^{\e}(q)$, which require special attention. As before, if a primitive prime divisor of $q^e-1$ exists, then $\ell_e(q)$ denotes the largest such prime divisor (as noted in Section \ref{s:prel}, if $e \geqs 2$, then $\ell_e(q)$ exists unless $(q,e) = (2,6)$, or $e=2$ and $q$ is a Mersenne prime).

\begin{lemma}\label{lem:L2even} 
Theorem \ref{t:main2} holds if $G = {\rm L}_2(q)$ and $q$ is even. 
\end{lemma}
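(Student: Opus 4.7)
The plan is to enumerate the maximal subgroups $H$ of $G = {\rm L}_2(q)$ for $q = 2^f \geqs 8$ (smaller cases being excluded by Table \ref{tab:gps}) and determine, for each $H$, the set of derangements on $G/H$ together with their orders. By Dickson's classification, up to conjugacy these maximal subgroups are: the parabolic ${\rm P}_1$ of order $q(q-1)$; the dihedral normalizers ${\rm D}_{2(q-1)}$ and ${\rm D}_{2(q+1)}$ of the split and non-split maximal tori; and the subfield subgroups ${\rm L}_2(q_0)$ where $q = q_0^k$ with $k$ prime. The key structural fact is that every nontrivial element of $G$ is either an involution (unipotent), split semisimple of order dividing $q-1$, or non-split semisimple of order dividing $q+1$; since $q$ is even, $q-1$ and $q+1$ are coprime odd integers, and all involutions of $G$ are $G$-conjugate.

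For $H \in \{{\rm P}_1, {\rm D}_{2(q-1)}\}$, I would argue that the derangements are precisely the non-identity elements of order dividing $q+1$. Every involution fixes a point (hence lies in a conjugate of ${\rm P}_1$) and setwise stabilises each of its $q/2$ nontrivial orbits on $\mathbb{P}^1$ (hence lies in $q/2$ conjugates of ${\rm D}_{2(q-1)}$), while split semisimple elements fix their two eigenspaces. By contrast, elements of order dividing $q+1$ are odd-order and coprime to $|H|$, so no such element lies in $H$. Hence \eqref{e:star} holds if and only if $q+1$ is a prime power $r^e$, and Lemma \ref{lem:primepower} applied to $2^f + 1 = r^e$ forces either $e = 1$ with $r = q+1$ a Fermat prime, or $(q, r, e) = (8, 3, 2)$; these are exactly the rows of Table \ref{tab:main} involving ${\rm P}_1$ and ${\rm D}_{2(q-1)}$ for $q$ even.

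For $H = {\rm D}_{2(q+1)}$, a similar analysis applies: the nontrivial elements of $H$ are the non-split elements of one torus and the $q+1$ involutions normalizing it, and a short count (using $|\Centralizer_G(u)| = q$ for an involution $u$) shows every involution of $G$ lies in exactly $q/2$ conjugates of $H$. The derangements are therefore the non-identity elements of order dividing $q-1$, and Lemma \ref{lem:primepower} applied to $r^e + 1 = 2^f$ gives $e = 1$ with $r = q-1$ a Mersenne prime, matching the corresponding row. Finally, for $H = {\rm L}_2(q_0)$ with $q = q_0^k$ and $k$ prime, the plan is to exhibit two derangements of incompatible orders, ruling out \eqref{e:star}. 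If $k \geqs 3$ is odd and $(q_0, k) \neq (2, 3)$, then Zsigmondy's theorem supplies primitive prime divisors $\ell_k(q_0) \mid q-1$ and $\ell_{2k}(q_0) \mid q+1$, neither of which divides $q_0 \mp 1$, yielding derangements of distinct prime orders. If $k = 2$ (so $q_0 \geqs 4$), then taking primes $p_1 \mid q_0 - 1$ and $p_2 \mid q_0 + 1$ produces an element of composite order $p_1 p_2$ that is a derangement. The sole Zsigmondy exception $(q_0, k) = (2, 3)$ gives $q = 8$ with ${\rm L}_2(2) \cong \SSS_3$, and a direct check there yields derangement orders $\{7, 9\}$.

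The main technical obstacle is the subfield case, specifically the bookkeeping of how the split/non-split dichotomy for semisimple elements changes under ${\rm L}_2(q_0) \hookrightarrow {\rm L}_2(q)$: when $k = 2$, every non-split semisimple element of ${\rm L}_2(q_0)$ becomes split in ${\rm L}_2(q)$, because its order divides $q_0 + 1 \mid q_0^2 - 1 = q - 1$; consequently the pure ppd argument breaks down for $k = 2$, and one must instead rule out \eqref{e:star} via elements of composite order.
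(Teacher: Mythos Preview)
Your argument is correct and follows the same overall strategy as the paper: enumerate the maximal subgroups via Dickson, identify the derangements for each, and invoke Lemma~\ref{lem:primepower}. Two points of comparison are worth making.

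First, your subfield argument is more elaborate than necessary. The paper simply observes that elements of order $q-1$ and $q+1$ are both derangements on $G/{\rm L}_2(q_0)$ (their orders exceed the maximal element order $q_0+1$ in $H$), and since $(q-1,q+1)=1$ they cannot both be powers of a single prime $r$. This one-line argument replaces your Zsigmondy case analysis and the separate treatment of $k=2$. Incidentally, the exceptional case $(q_0,k)=(2,3)$ you single out does not actually occur: the maximality hypothesis requires $q_0\neq 2$ (since ${\rm L}_2(2)\cong\SSS_3$ sits inside ${\rm D}_{18}$), so the Zsigmondy exception is vacuous here.

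Second, your uniform handling of $q=8$ via the exceptional solution $(r,s,m,n)=(2,3,3,2)$ in Lemma~\ref{lem:primepower} is cleaner than the paper's approach, which disposes of $f=3$ by a direct \textsf{GAP} computation before assuming $f\geqs 4$. Your route recovers the rows for ${\rm L}_2(8)$ in Table~\ref{tab:main} without a machine check.
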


\begin{proof} 
Write $q=2^f$, where $f \geqs 3$ (since ${\rm L}_{2}(4) \cong \Alt_5$, we may assume that $f \geqs 3$). The maximal subgroups of $G$ were originally classified by Dickson \cite{Dickson} (also see \cite[Tables 8.1 and 8.2]{BHR}); the possibilities for $H$ are as follows:
\begin{itemize}\addtolength{\itemsep}{0.2\baselineskip}
\item[{\rm (a)}] $H = (\mathbb{Z}_{2})^f{:}\mathbb{Z}_{q-1} = {\rm P}_1$ is a maximal parabolic subgroup of $G$;
\item[{\rm (b)}] $H = {\rm D}_{2(q\pm 1)}$;
\item[{\rm (c)}] $H = {\rm L}_{2}(q_0)$ with $q=q_0^e$, where $e$ is a prime and $q_0 \neq 2$.
\end{itemize}
The case $f=3$ can be handled using \textsf{GAP} \cite{GAP}, and we find that \eqref{e:star} holds if and only if $(H,r,\Ed(G))$ is one of the following (recall that $\Ed(G)$ denotes the set of orders of derangements in $G$):
$$({\rm P}_1, 3,\{3,9\}),\; ({\rm D}_{18}, 7, \{7\}),\; ({\rm D}_{14}, 3, \{3,9\}).$$
For the remainder, we may assume that $f \geqs 4$. 

Note that a Sylow $2$-subgroup of $G$ is self-centralizing and elementary abelian. In particular, if $x \in G$ then either $|x|=2$, or $|x|$ divides $q \pm 1$. Also note that $G$ contains elements of order $q\pm 1$, and it has a unique class of involutions. 

\vs

\noindent \emph{Case 1.} $H = {\rm P}_1$.

\vs

We claim that \eqref{e:star} holds if and only if $r=q+1$ is a Fermat prime. To see this, first observe that $|G:H|=q+1$ and $|H| = q(q-1)$ are relatively prime, so any element $x \in G$ of order $q+1$ is a derangement. Therefore, if \eqref{e:star} holds then $q+1=r^e$ for some $e\geqs 1$, and thus Lemma \ref{lem:primepower} implies that $f$ is a $2$-power and $e=1$ (so $r=q+1$ is a Fermat prime). 

For the converse, suppose that $q+1$ is a Fermat prime. We need to show that every derangement in $G$ has order $r=q+1$. Let $y \in \Delta(G)$, so $|y|$ divides $2$ or $q\pm 1$. But $q+1=r$ is a prime, so either $|y| \in \{2,r\}$ or $|y|$ divides $q-1$. 
Every involution has fixed points since $G$ has a unique class of involutions, so $|y|>2$. If $|y|$ divides $q-1$, then $y$ belongs to a maximal torus that is $G$-conjugate to the subgroup $\mathbb{Z}_{q-1}<H$. Again, this implies that $y$ has fixed points. Therefore, $|y|=r$ is the only possibility and the result follows. 

\vs

\noindent \emph{Case 2.} $H = {\rm D}_{2(q\pm 1)}$.

\vs

The case $H = {\rm D}_{2(q-1)}$ is identical to the previous one, and the same conclusion holds. A very similar argument also applies if $H = {\rm D}_{2(q+1)}$. Here any element of order $q-1$ is a derangement and by applying Lemma \ref{lem:primepower} we deduce that \eqref{e:star} holds if and only if $r=q-1$ is a Mersenne prime. 

\vs

\noindent \emph{Case 3.} $H = {\rm L}_{2}(q_0)$, where $q=q_0^e$, $e$ prime, $q_0 \neq 2$.

\vs

Finally, observe that subfield subgroups are easily eliminated since elements of order $q \pm 1$ are derangements.
\end{proof}

\begin{lemma}\label{lem:L2ext}
Theorem \ref{t:main2} holds if $G_0 = {\rm L}_2(q)$ and $q$ is even.
\end{lemma}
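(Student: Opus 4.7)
The plan is to extend Lemma~\ref{lem:L2even} from the simple-group case to almost simple overgroups, using the inclusion \eqref{e:g0} together with Lemma~\ref{l:gms}. Assume $G>G_0$ and write $q=2^f$ with $f\geqs 3$; since $\Out(G_0)$ is cyclic of order $f$, we have $G=G_0{:}\la\sigma\ra$ where $\sigma=\phi^{f/e}$ is a power of the Frobenius $\phi$ and $e>1$ is a divisor of $f$. Let $H$ be a point stabilizer, set $H_0=H\cap G_0$, and fix a maximal subgroup $M$ of $G_0$ containing $H_0$. By \eqref{e:g0}, every element of $\Delta_M(G_0)$ is an $r$-element, so Lemma~\ref{lem:L2even} forces $(G_0,M,r)$ into one of the following families:
\begin{itemize}\addtolength{\itemsep}{0.2\baselineskip}
\item[(I)] $M\in\{{\rm P}_1,{\rm D}_{2(q-1)}\}$ with $r=q+1$ a Fermat prime, so $f$ is a $2$-power by Lemma~\ref{lem:primepower};
\item[(II)] $M={\rm D}_{2(q+1)}$ with $r=q-1$ a Mersenne prime, so $f$ is prime;
\item[(III)] $q=8$ with $(M,r)\in\{({\rm P}_1,3),({\rm D}_{14},3),({\rm D}_{18},7)\}$.
\end{itemize}

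The crucial observation is that in cases~(I) and~(II), no outer element can be a derangement. Indeed, any $x\in G_0\sigma^j$ with $1\leqs j<e$ has image in $G/G_0$ of some order $d>1$ dividing $e$, and $d$ divides $|x|$. If $x\in\Delta(G)$, then $|x|$ is a power of $r$, so $d$ must itself be a power of $r$. In case~(I), $r=q+1$ is odd while $d$ divides $e\mid f$, a $2$-power, forcing $d=1$, a contradiction. In case~(II), $d$ divides the prime $f$, so $d=f$; then $f$ would be a power of $r=2^f-1$, giving $f=r$ (as $f$ is prime), which contradicts $2^f-1>f$ for $f\geqs 2$. Thus in cases~(I) and~(II) every $x\in G\setminus G_0$ must have a fixed point on $\Omega=G/H$.

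It now remains to determine, for each candidate triple, which choices of $G$ and $H$ actually realize this fixed-point condition; equivalently, by Lemma~\ref{l:gms} applied to the cyclic quotient $G/G_0$ (cosetwise), which ensure that every outer element normalizes a unique $G$-conjugate of $H_0$. Since ${\rm L}_2(q)$ admits no novel maximal subgroups in its almost simple extensions (see \cite[Tables~8.1,~8.2]{BHR}), we have $H=N_G(H_0)$ with $H_0=M$. In case~(II), a short count of the $\sigma$-stable $G_0$-conjugates of ${\rm D}_{2(q+1)}$, using that the centralizer of $\sigma$ in $G_0$ is ${\rm L}_2(2^{f/e})$ and that the $H_0$-classes partition compatibly with the field-automorphism action, verifies the covering; this yields the entry $({\rm \Gamma L}_2(q),\,N_G({\rm D}_{2(q+1)}),\,r)$ of Table~\ref{tab:main}. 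In case~(I), a suitable element of order $2e$ in the outer coset (obtained as the product of $\sigma$ with an involution in its $G_0$-centralizer) is shown to normalize no $G$-conjugate of $M$, so the Fermat family contributes no new entries. The main obstacle here is organizing this fixed-point count cleanly; it is handled by working explicitly with the standard $\sigma$-stable Borel and torus normalizers in ${\rm L}_2(q)$.

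Finally, case~(III) is settled by direct computation in \textsf{GAP}~\cite{GAP} on $G={\rm \Gamma L}_2(8)$ (the Mersenne subcase $q=8$, $r=7$ is in fact already covered by the argument in case~(II)), producing the two remaining entries $({\rm \Gamma L}_2(8),N_G({\rm P}_1),3)$ and $({\rm \Gamma L}_2(8),N_G({\rm D}_{14}),3)$ of Table~\ref{tab:main}, with $\Ed(G)=\{3,9\}$ in both cases. This completes the identification.
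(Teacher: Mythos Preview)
Your overall strategy matches the paper's: reduce to the three families via Lemma~\ref{lem:L2even}, then analyse outer cosets. Your arithmetic observation that in cases~(I) and~(II) no element outside $G_0$ can have $r$-power order is a clean way to organize the argument, and the paper uses the same idea implicitly. However, the execution in both non-trivial cases is left at the level of assertion, and in case~(I) the stated construction does not do what you claim.

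In case~(II) you defer to ``a short count of the $\sigma$-stable $G_0$-conjugates of ${\rm D}_{2(q+1)}$''. This is not obviously short: one must show that \emph{every} element of \emph{every} nontrivial coset $G_0\phi^j$ has a fixed point, not just $\phi$ itself, and Lemma~\ref{l:gms} only converts this into a uniqueness statement that still has to be verified. The paper avoids counting altogether: it computes $\Centralizer_G(\phi)\cong\SSS_3\times\la\phi\ra$, deduces that the only element orders in $G$ are $2,\,r,\,f,\,2f,\,3f$ and divisors of $q+1$, and then shows explicitly that $\Centralizer_G(\phi)\leqs H$, which forces every element of order $2f$ or $3f$ into $H$. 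This is the substantive content of the case and your sketch does not supply it.

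In case~(I) there is an actual error and a real gap. If $\tau\in\Centralizer_{G_0}(\sigma)$ is an involution and $|\sigma|=e$ with $e$ a $2$-power $\geqs 2$, then $|\tau\sigma|=\mathrm{lcm}(2,e)=e$, not $2e$. More importantly, you give no argument that such an element is a derangement, and the paper shows this requires genuine work that differs between the two subcases. For $H_0={\rm D}_{2(q-1)}$ the paper observes that a Sylow $2$-subgroup of $G$ is nonabelian (so $G$ contains elements of order $4$) while a Sylow $2$-subgroup of $H$ is $C_2\times C_2$, producing a derangement of order $4$. For $H_0={\rm P}_1$ the paper exhibits an element of order $2(q_0+1)$ in $\Centralizer_G(\phi)$ (where $q_0=q^{1/2}$) and then, via an explicit matrix calculation, shows that $H$ contains no such element. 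Your uniform ``product of $\sigma$ with an involution'' does not cover either argument, and the two subcases really do need separate treatment.
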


\begin{proof} 
As before, write $q=2^f$, where $f \geqs 3$. In view of Lemma \ref{lem:L2even}, we may assume that 
$$G = G_0. \la \phi \ra \leqs {\rm \Gamma L}_{2}(q) = {\rm Aut}(G_0),$$ 
where $\phi$ is a nontrivial field automorphism of $G_0$, so the order of $\phi$ divides $f$. The case $f=3$ can be handled directly, using \cite{GAP} for example. Here $G = {\rm \Gamma L}_{2}(8)$ and we find that \eqref{e:star} holds if and only if $(H,r,\Ed(G))$ is one of the following:
$$(\Normalizer_{G}({\rm P}_1), 3,\{3,9\}),\; (\Normalizer_{G}({\rm D}_{18}), 7, \{7\}),\; (\Normalizer_{G}({\rm D}_{14}), 3, \{3,9\}).$$
For the remainder, we may assume that $f \geqs 4$. 

Since $G_0 \not\leqs H$, we have $G=G_0H$. Set $H_0 = H \cap G_0$ and note that $H_0$ is a maximal subgroup of $G_0$ (see \cite[Table 8.1]{BHR}). As in \eqref{e:g0}, we have $\Delta_{H_0}(G_0) \subseteq \Delta_H(G)$, whence Lemma \ref{lem:L2even} implies that \eqref{e:star} holds only if one of the following holds:
\begin{itemize}\addtolength{\itemsep}{0.2\baselineskip}
\item[{\rm (a)}] $H_0={\rm P}_1$, $r=q+1$ is a Fermat prime;
\item[{\rm (b)}] $H_0={\rm D}_{2(q+1)}$, $r=q-1$ is a Mersenne prime;
\item[{\rm (c)}] $H_0={\rm D}_{2(q-1)}$, $r=q+1$ is a Fermat prime.
\end{itemize}
We consider each of these cases in turn.

\vs

\noindent \emph{Case 1.} $H_0={\rm D}_{2(q+1)}$, $r=q-1$ is a Mersenne prime.

\vs

Here $f \geqs 5$ is a prime, so $G = {\rm \Gamma L}_{2}(q) = G_0. \la \phi \ra$ and $H=H_0.\la \phi \ra$ is the only possibility, where $\phi$ has order $f$. Note that 
$$\Centralizer_G(\phi) = {\rm L}_{2}(2) \times \la \phi \ra \cong \SSS_3 \times \mathbb{Z}_{f},$$
so if $x\in G$ then either $|x|\in \{2,r,f,2f,3f\}$, or $|x|$ divides $q+1$. We claim that $\Ed(G)=\{r\}$. Note that $\la \phi \ra$ is a Sylow $f$-subgroup of $G$.

Let $y \in G$ be a nontrivial element. If $|y| \in \{2,f\}$, or if $|y|$ divides $q+1$, then $y$ is conjugate to an element of $H$ and thus $y$ has fixed points. Next suppose that $|y|=kf$ and $k \in \{2,3\}$. 
Then $|y^k|=f$ and thus $y^k$ is $G$-conjugate to $\phi^i$ for some $1 \leqs i<f$. Without loss of generality, we may assume that $y^k=\phi$, so $y \in\Centralizer_G(\phi)$. Since $|H|=2(q+1)f$, $H$ has a Sylow $2$-group $R=\la u\ra \cong \mathbb{Z}_{2}$ and a normal $2$-complement $V\la \phi \ra$ of order $(q+1)f$, where $V \cong \mathbb{Z}_{q+1}$. Since $\phi$ normalizes $H_0=VR$, we deduce that $\phi$ centralizes $R$. Now $q+1$ is divisible by $3$, so $V$ has a unique subgroup of order $3$, say $\la x\ra$. Then the involution $u$ inverts $x$, and $\phi$ centralizes $x$ since $|\phi|=f \geqs 5$ is odd. Thus $\SSS_3\cong \la u,x \ra\leqs \Centralizer_{G_0}(\phi)$, which implies that $\Centralizer_G(\phi)=\la u,x \ra \times \la \phi \ra\leqs H$. Therefore, $y \in H$. We conclude that every derangement in $G$ has order $r$, as required. 

\vs

In the two remaining cases, $r=q+1$ is a Fermat prime and $f=2^m$ for some integer $m \geqs 2$. In both cases, we claim that \eqref{e:star} does not hold. 
In order to see this, we may assume that the index of $G_0$ in $G$ is a prime number, which in this case implies that $|G:G_0|=2$, so $G = G_0.\la \phi \ra$ and $\phi$ is an involutory field automorphism of $G_0$. Indeed, if $G_0 \normeq G_1\normeq G$ then $G=HG_1$ and Lemma \ref{lem:normal} implies that $\Delta_L(G_1)\subseteq\Delta(G)$ for any subgroup $L$ of $G_1$ containing $G_1\cap H$.

\vs

\noindent \emph{Case 2.} $H_0={\rm D}_{2(q-1)}$, $r=q+1$ is a Fermat prime.

\vs

By the above comments, we may assume that $G = G_0.\la \phi \ra$ and $H={\rm D}_{2(q-1)}.\la \phi \ra$, where $\phi$ has order $2$. Note that $\Centralizer_G(\phi)={\rm L}_2(2^{f/2}) \times \la \phi\ra$. 
Since $\Centralizer_G(\phi)$ does not contain a Sylow $2$-subgroup of $G$, we deduce that the Sylow $2$-subgroups of $G$ are nonabelian. Therefore $G$ contains an element $z$ of order $4$. However, the Sylow $2$-subgroups of $H$ are isomorphic to $C_2 \times C_2$, so $z\in\Delta(G)$. We conclude that $G$ contains derangements of order $r$ and $4$, so \eqref{e:star} does not hold. 

\vs

\noindent \emph{Case 3.} $H_0={\rm P}_1$, $r=q+1$ is a Fermat prime.

\vs

Finally, let us assume that $H = \Normalizer_G({\rm P}_1) = {\rm P}_{1}.\la \phi \ra = H_0.\la \phi \ra$, where $|\phi|=2$. As above, we have $\Centralizer_G(\phi)={\rm L}_2(2^{f/2})\times \la \phi\ra$, so 
$\Centralizer_G(\phi)$ contains an element  of order $2(q_0+1)$, where $q_0 = 2^{f/2}$. We claim that $H$ does not contain such an element. Seeking a contradiction, suppose $x \in H$ has order $2(q_0+1)$. Since $H=H_0\cup H_0\phi$ and $H_0={\rm P}_1$ has no element of order $2(q_0+1)$, we deduce that $x\in H_0\phi$ and we may write $x=u\phi$ with $u\in H_0$. 
In terms of matrices (and a suitable basis for the natural ${\rm L}_{2}(q)$-module), we have 
$$u=\left(\begin{array}{cc}\lambda  & a \\  0 & \lambda^{-1} \end{array}\right)$$ 
where $\lambda,a\in\F_{q}$ and $\lambda\neq 0$.
Then $x^2=(u\phi)(u\phi)=uu^\phi$ has order $q_0+1$. We may assume that $\phi$ is the standard field automorphism of order $2$ with respect to this basis, so
\[x^2=uu^\phi=\left(\begin{array}{cc}\lambda  & a \\  0 & \lambda^{-1} \end{array}\right)\left(\begin{array}{cc}\lambda^{q_0}  & a^{q_0} \\  0 & \lambda^{-q_0} \end{array}\right)=\left(\begin{array}{cc}\lambda^{1+q_0}  & b \\  0 & \lambda^{-1-q_0} \end{array}\right)\] 
with $b=\lambda^{q_0}+a\lambda^{-q_0}$. Since $x^2$ has order $q_0+1$ we deduce that 
$\lambda^{2(q_0+1)}=1$, which implies that $\lambda^{q_0+1}=1$ since $\F_q$ has characteristic $2$. Therefore
$$x^2=\left(\begin{array}{cc}1  & b \\  0 & 1 \end{array}\right)$$ 
has order $q_0+1$, which is absurd. This justifies the claim, and we deduce that $\Delta(G)$ contains elements of order $2(q_0+1)$. In particular, \eqref{e:star} does not hold.
\end{proof}

\begin{lemma}\label{lem:L2odd}
Theorem \ref{t:main2} holds if $G_0 = {\rm L}_2(q)$ and $q$ is odd.
\end{lemma}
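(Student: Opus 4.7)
The plan is to follow the same template as Lemmas \ref{lem:L2even}--\ref{lem:L2ext}. Write $q = p^f$ with $p$ an odd prime. The small cases $q \leqs 11$ can be handled directly in \textsf{GAP} \cite{GAP} (in particular, $\PSL_2(3)$, $\PSL_2(5)$ and $\PSL_2(9)$ are already covered via the alternating isomorphisms recorded in Remark \ref{r:isom}), so I may assume $q \geqs 13$. By Dickson's theorem (see \cite[Tables 8.1--8.2]{BHR}), the conjugacy classes of maximal subgroups of $G_0 = \PSL_2(q)$ are the Borel subgroup ${\rm P}_1$ of order $q(q-1)/2$, the dihedral torus normalisers ${\rm D}_{q-1}$ and ${\rm D}_{q+1}$ of orders $q-1$ and $q+1$, subfield subgroups of type $\PSL_2(q_0)$ or $\PGL_2(q_0)$ (with $q = q_0^e$, $e$ prime), and in a handful of cases the exceptional subgroups $\Alt_4$, $\SSS_4$ and $\Alt_5$.

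First I would treat $G = G_0$, using the standard fact that every nontrivial semisimple element of $\PSL_2(q)$ lies in a unique maximal torus of order $(q-1)/2$ or $(q+1)/2$. If $H_0 = {\rm P}_1$ then the derangements are precisely the nontrivial elements of the non-split tori, so property \eqref{e:star} is equivalent to $(q+1)/2$ being a prime power $r^e$; this gives $q = 2r^e - 1$ with $\Ed(G_0) = \{r^i : 1 \leqs i \leqs e\}$, matching the ${\rm L}_2(q)$ row of Table \ref{tab:main}. If $H_0 \in \{{\rm D}_{q-1}, {\rm D}_{q+1}\}$, then every unipotent element of order $p$ is a derangement, and so is every nontrivial element of the non-containing torus (of order dividing $(q \mp 1)/2 \geqs 6$); since $(p, (q \mp 1)/2) = 1$, property \eqref{e:star} cannot hold. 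For the subfield and exceptional cases I would pair a unipotent derangement of order $p$ with a semisimple derangement of coprime order, supplied by a primitive prime divisor of $q^2-1$ not dividing $q_0^2 - 1$ via Lemma \ref{l:nt}.

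Next I would turn to the almost simple extensions $G_0 < G \leqs \Aut(G_0) = \PGL_2(q) \cdot \la \phi \ra$, where $\phi$ is a field automorphism of order $f$. By Lemma \ref{lem:normal} and the inclusion \eqref{e:g0}, the analysis of $G = G_0$ already forces $H_0 = {\rm P}_1$ with $(q+1)/2 = r^e$. The first new case is $G = \PGL_2(q)$ with $H = \Normalizer_G({\rm P}_1)$: here the non-split torus extends to a cyclic subgroup of order $q+1$, and the derangements are again precisely its nontrivial elements, so \eqref{e:star} holds iff $q+1$ is a prime power. Since $q$ is odd, this forces $q+1$ to be a $2$-power, yielding the Mersenne case $q = 2^{e+1} - 1$ and the ${\rm PGL}_2(q)$ row of Table \ref{tab:main} with $\Ed(G) = \{2^i : 1 \leqs i \leqs e+1\}$. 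The remaining extensions involve a nontrivial field automorphism $\phi^i$, and here the strategy of Case 3 of Lemma \ref{lem:L2ext} adapts: I would exhibit an element of $\Centralizer_G(\phi^i) \cong \PGL_2(q_0) \times \la \phi^i \ra$ (with $q_0 = p^{f/|\phi^i|}$) of non-$r$-power order, obtained by combining a subfield torus element with a power of $\phi$, and verify via an explicit matrix calculation, paralleling the computation of $uu^\phi$ given in the even case, that no $G$-conjugate of this element lies in $H = {\rm P}_1 \cdot \la \phi \ra$.

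The main obstacle will be this last step. As in the even case, it requires careful bookkeeping to confirm that a derangement of non-$r$-power order genuinely appears in every $G$ involving a nontrivial field automorphism, controlling simultaneously the $\PSL_2(q)$ versus $\PGL_2(q)$ fusion and the action of the field automorphism on the various tori. I expect Lemmas \ref{lem:primepower} and \ref{l:nagell} to govern the small exceptional numerical configurations (in particular, coincidences where $q+1$ and $p^{f/|\phi^i|} \pm 1$ share significant common factors), with \textsf{GAP} handling any residual borderline cases that escape the general argument.
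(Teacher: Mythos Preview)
Your treatment of the case $G = G_0$ has a genuine gap in the subfield subgroup case. You propose to ``pair a unipotent derangement of order $p$ with a semisimple derangement of coprime order'', but when $H_0 = {\rm L}_2(q_0)$ with $q = q_0^e$ and $e$ an odd prime, there are \emph{no} unipotent derangements. Indeed, ${\rm L}_2(q)$ has exactly two classes of elements of order $p$, represented by $\left(\begin{smallmatrix}1&1\\0&1\end{smallmatrix}\right)$ and $\left(\begin{smallmatrix}1&\nu\\0&1\end{smallmatrix}\right)$ with $\nu$ a non-square in $\F_q^*$; since $e$ is odd, a non-square in $\F_{q_0}^*$ remains a non-square in $\F_q^*$, so both $G_0$-classes meet $H_0$ and every element of order $p$ has fixed points. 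The paper avoids this by taking elements of order $(q-1)/2$ and $(q+1)/2$; both are derangements (their orders exceed the maximal element order $q_0+1$ of the torus part of $H_0$) and they are coprime, so \eqref{e:star} fails immediately. This is both simpler and correct.

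For the extensions $G \neq G_0$ your plan diverges more substantially from the paper, and the route you sketch is harder to make work. The paper first observes that when $(q+1)/2 = r^e$ and $f>1$, Lemma~\ref{lem:primepower} forces $r$ to be odd, and then runs a Zsigmondy-style argument (parallel to Lemma~\ref{l:nagell}(iii)) to show that $f$ must be a $2$-power. This reduces to the single case $|G:G_0| = 2$ with $G = G_0.\la\phi\ra$ or $G_0.\la\delta\phi\ra$, and now Lemma~\ref{l:gms} finishes the job in one line: $\phi$ and $\delta\phi$ each fix more than one $1$-space, so the nontrivial coset contains derangements, all of even order, contradicting $r$ odd. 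Your proposed matrix computation in the style of Case~3 of Lemma~\ref{lem:L2ext} could perhaps be pushed through, but without first establishing that $f$ is a $2$-power you would have to handle field automorphisms of arbitrary order, and your identification $\Centralizer_G(\phi^i) \cong {\rm PGL}_2(q_0) \times \la\phi^i\ra$ is not quite right (in $G_0.\la\phi\ra$ one gets ${\rm L}_2(q_0)$, not ${\rm PGL}_2(q_0)$). Note also that Lemma~\ref{l:nagell} concerns $q^2+q+1$ and plays no role here.
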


\begin{proof}
Write $q=p^f$, where $p$ is an odd prime. In view of the isomorphisms ${\rm L}_{2}(5) \cong \Alt_5$ and ${\rm L}_{2}(9) \cong \Alt_6$, we may assume that $q \geqs 7$ and $q \neq 9$. The case $q=7$ can be checked directly using \textsf{GAP}, and we find that \eqref{e:star} holds if and only if $(G,H,r,\Ed(G))$ is one of the following:
$$({\rm L}_{2}(7), {\rm P}_{1}, 2,\{2,4\}),\; ({\rm L}_{2}(7), \SSS_4, 7,\{7\}),\; ({\rm PGL}_{2}(7), \Normalizer_{G}({\rm P}_{1}), 2,\{2,4,8\}).$$
For the remainder, we may assume that $q \geqs 11$.

\vs

\noindent \emph{Case 1.} $G=G_0$.

\vs

First assume that $G = {\rm L}_{2}(q)$. The maximal subgroups of $G$ are well known (see \cite[Tables 8.1 and 8.2]{BHR}); the possibilities for $H$ are as follows:
\begin{itemize}\addtolength{\itemsep}{0.2\baselineskip}
\item[{\rm (a)}] $H = (\mathbb{Z}_{p})^f{:}\mathbb{Z}_{(q-1)/2} = {\rm P}_1$ is a maximal parabolic subgroup of $G$; 
\item[{\rm (b)}] $H = {\rm D}_{q-\e}$, where $q \geqs 13$ if $\e=1$;
\item[{\rm (c)}] $H = {\rm L}_{2}(q_0)$, where $q=q_0^e$ for some odd prime $e$;
\item[{\rm (d)}] $H = {\rm PGL}_{2}(q_0)$, where $q=q_0^2$;
\item[{\rm (e)}] $H=\Alt_5$, where $q \equiv \pm 1 \imod{10}$ and either $q=p$, or $q=p^2$ and $p \equiv  \pm 3 \imod{10}$;
\item[{\rm (f)}] $H=\Alt_4$, where $q =p \equiv \pm 3 \imod{8}$ and $q \not\equiv \pm 1 \imod{10}$;
\item[{\rm (g)}] $H=\SSS_4$, where $q=p \equiv \pm 1 \imod{8}$.
\end{itemize}
Note that $G$ contains elements of order $(q \pm 1)/2$, and a unique conjugacy class of involutions.

If $H$ is a subfield subgroup (as in (c) or (d) above), then it is clear that any element of order $(q \pm 1)/2$ is a derangement, so property \eqref{e:star} does not hold in this situation. Similarly, it is straightforward to handle the cases $H \in \{\Alt_5, \Alt_4, \SSS_4\}$. For example, suppose $H = \Alt_5$, so $q \equiv \pm 1 \imod{10}$ and either $q=p$, or $q=p^2$ and $p \equiv  \pm 3 \imod{10}$. Note that every nontrivial element of $H$ has order $2,3$ or $5$. If $q \geqs 19$ then any element of order $(q \pm 1)/2$ is a derangement; if $q=11$, then elements of order $6$ are derangements. The cases $H = \Alt_4$ and $\SSS_4$ are just as easy.

If $H = {\rm D}_{q - 1}$ then any element in $G$ of order $p$ or $(q+1)/2$ is a derangement, and the dihedral groups of order $q+1$ can be eliminated in a similar fashion.

Finally, let us assume that $H = {\rm P}_{1} = (\mathbb{Z}_{p})^f{:}\mathbb{Z}_{(q-1)/2}$, so $|H| = q(q-1)/2$ and $|G:H|=q+1$. We claim that \eqref{e:star} holds if and only if $q=2r^e-1$ for some positive integer $e$.

First observe that any element of order $(q+1)/2$ is a derangement, so if \eqref{e:star} holds then $q=2r^e-1$ for some $e \in \mathbb{N}$. For the converse, suppose that $q=2r^e-1$. We claim that 
$$\Ed(G) = \{r^i \,:\, 1 \leqs i \leqs e\}.$$ 
Since $|H|$ is indivisible by $r$, the inclusion $\{r^i \,:\, 1 \leqs i \leqs e\} \subseteq \Ed(G)$ is clear. To see that equality holds, 
let $y \in G$ be a nontrivial element, and suppose that $|y|$ is divisible by a prime $s \neq r$. Since a Sylow $p$-subgroup of $G$ is self-centralizing, it follows that either $|y| = p$, or $|y|=2$ and $r$ is odd, or $|y|$ is a divisor of $(q-1)/2$. In the first two cases, it is clear that $y$ has fixed points, so let us assume that $|y|$ divides $(q-1)/2$. Then $y$ is conjugate to an element of the maximal torus $\mathbb{Z}_{(q-1)/2} < H$, so once again $y$ has fixed points. This justifies the claim.

\vs

\noindent \emph{Case 2.} $G \neq G_0$.

\vs

To complete the proof of the lemma, we may assume that $G \neq G_0$, $q \geqs 11$ and $H_0 = H \cap G_0 = {\rm P}_{1}$, in which case \eqref{e:star} holds only if $q=2r^e-1$ for some positive integer $e$ (note that $H \cap G_0$ is a maximal subgroup of $G_0$). There are several possibilities for $G$.

First assume that $G = {\rm PGL}_{2}(q)$, so $H = (\mathbb{Z}_{p})^f{:}\Z_{q-1}$. We claim that \eqref{e:star} holds if and only if $r=2$ and $q=2^{e+1}-1$ is a Mersenne prime. As above, any element of order $(q+1)/2$ is a derangement. Now $G$ also contains elements of order $q+1$, and they are also derangements. Therefore, if \eqref{e:star} holds then $r=2$ is the only possibility, so $p^f+1 = 2^{e+1}$ and Lemma \ref{lem:primepower} implies that  $q=p=2^{e+1}-1$ is a Mersenne prime. 

For the converse, suppose that $q=p=2^{e+1}-1$ is a Mersenne prime. We claim that 
$$\Ed(G) = \{2^i \,:\, 1 \leqs i \leqs e+1\}.$$ 
As above, any involution in $G_0$ is a derangement, and so is any element in $G$ of order $2^i$ with $1< i \leqs e+1$ since $|H|_2 = 2$, hence $\{2^i \,:\, 1 \leqs i \leqs e+1\} \subseteq \Ed(G)$. To see that equality holds, suppose that $y \in G$ has order divisible by an odd prime. Then either $|y|=p$, or $y$ is conjugate to an element of the maximal torus $\mathbb{Z}_{q-1} < H$; in both cases, $y$ has fixed points. The result follows. 

To complete the proof of the lemma, we may assume that $G = G_0.\la \phi \ra$ or $G_0.\la \delta\phi \ra$, where $\phi$ is a nontrivial field automorphism of $G_0$ of order $e$ (so $e$ divides $f$) and $\delta = {\rm diag}(\omega_1,\omega_2) \in {\rm PGL}_{2}(q)$ (modulo scalars) is a diagonal automorphism of $G_0$. Recall that $(q+1)/2 = r^e$ for some prime $r$ and positive integer $e$. Our goal is to show that \eqref{e:star} does not hold. 

First observe that $r$ is odd. Indeed, if $r=2$ then $p^f+1 = 2^{e+1}$ and thus Lemma \ref{lem:primepower} implies that $f=1$, which is false. Next we claim that $f$ is a $2$-power. 
To see this, first assume that $f$ is odd and $p=2^t-1$ is a Mersenne prime. Then $r^e=(p^f+1)/2$ is divisible by $(p+1)/2=2^{t-1}$, but $r$ is odd so this is not possible. For the general case, suppose that $f=2^am$ where $a \geqs 0$ and $m>1$ is odd (and we may assume that $a>0$ if $p$ is a Mersenne prime). We now proceed as in the proof of Lemma \ref{l:nagell}(iii). We have
$$r^e = \frac{q^2-1}{2(q-1)} = \frac{p^{2^{a+1}m}-1}{2(p^{2^{a}m}-1)}$$
and thus $r = \ell_{2f}(p)$. Set $s=\ell_{2^{a+1}}(p)$ (note that $s$ exists since $a>0$ if $p$ is a Mersenne prime). Now $f = 2^am$ is indivisible by $2^{a+1}$, so $(s,q-1)=1$ and thus $s$ does not divide $2(q-1)$. Therefore, $r=s$ is the only possibility, but this is a contradiction since $2f=2^{a+1}m>2^{a+1}$. This justifies the claim. 

Therefore, in order to show that \eqref{e:star} does not hold, we may assume that $|G:G_0|=2$. Write $G = G_0 \cup G_0\gamma$.

If we identify $\Omega$ with the set of $1$-dimensional subspaces of the natural ${\rm L}_{2}(q)$-module, then $\phi$ and $\delta\phi$ fix the $1$-spaces $\la (1,0) \ra$ and $\la (0,1) \ra$. Therefore, Lemma \ref{l:gms} implies that the coset $G_0\gamma$ contains derangements. But every element in this coset has even order, which is incompatible with property \eqref{e:star}. 
\end{proof}

To summarize, we have now established the following result. (Note that the case appearing in the final row of Table \ref{tab:l2} is recorded as $(G,H) = ({\rm L}_{3}(2), {\rm P}_{1})$ in Table \ref{tab:main}.)

\begin{proposition}\label{p:l2}
Let $G$ be a finite almost simple primitive permutation group with point stabilizer $H$ and socle ${\rm L}_{2}(q)$, where $q \geqs 4$ and $q \neq 5$. Then \eqref{e:star} holds if and only if $(G,H,r)$ is one of the cases in Table \ref{tab:l2}.
\end{proposition}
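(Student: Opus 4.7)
The plan is to read Proposition \ref{p:l2} off from the three preceding results, Lemmas \ref{lem:L2even}, \ref{lem:L2ext} and \ref{lem:L2odd}, which between them cover all almost simple primitive groups with socle $G_0 = {\rm L}_2(q)$ in the range $q\geqs 4$, $q\neq 5$. There is essentially no new group-theoretic work to do here; the whole content of the proof is the verification that the triples $(G,H,r)$ in Table \ref{tab:l2} are exactly the union of the cases produced by those three lemmas, once the exceptional isomorphisms recorded in Remark \ref{r:isom} are taken into account (in particular ${\rm L}_2(4)\cong\Alt_5$, ${\rm L}_2(5)\cong\Alt_5$, ${\rm L}_2(7)\cong{\rm L}_3(2)$ and ${\rm L}_2(9)\cong\Alt_6$).

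First I would split on the parity of $q$. If $q$ is even, Lemma \ref{lem:L2even} dispatches the case $G=G_0$, yielding the rows with $H={\rm P}_1$ or $H={\rm D}_{2(q-1)}$ and $r=q+1$ a Fermat prime, the row with $H={\rm D}_{2(q+1)}$ and $r=q-1$ a Mersenne prime, and the exceptional ${\rm L}_2(8)$ entries; Lemma \ref{lem:L2ext} then handles the proper extensions $G_0 < G \leqs{\rm \Gamma L}_2(q)$, killing off everything except the ${\rm \Gamma L}_2(q)$-case with $H=\Normalizer_G({\rm D}_{2(q+1)})$ and $r=q-1$ Mersenne, together with the two ${\rm \Gamma L}_2(8)$ normalizer rows. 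If $q$ is odd, Lemma \ref{lem:L2odd} treats $G=G_0$ and $G\neq G_0$ in a single statement, producing the two generic families: $({\rm L}_2(q),{\rm P}_1,r)$ with $q=2r^e-1$, and $({\rm PGL}_2(q),\Normalizer_G({\rm P}_1),2)$ with $q=2^{e+1}-1$ a Mersenne prime, plus the small-$q$ entries for $q=7$ (one of which reappears under the ${\rm L}_3(2)$ labelling in the final row of Table \ref{tab:l2}).

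The only obstacle worth naming is pure bookkeeping: making sure that every row of Table \ref{tab:l2} is accounted for by exactly one case of one lemma, that the translation between the Aschbacher-type names ${\rm P}_1$, ${\rm D}_{2(q\pm 1)}$ used inside $G_0$ and the normalizer descriptions $\Normalizer_G({\rm P}_1)$, $\Normalizer_G({\rm D}_{2(q\pm 1)})$ used for the extensions is consistent, and that the values of $\Ed(G)$ recorded in the parent table agree with those computed in the body of each lemma. Beyond this clerical check, the proof of Proposition \ref{p:l2} amounts to citing Lemmas \ref{lem:L2even}, \ref{lem:L2ext} and \ref{lem:L2odd}.
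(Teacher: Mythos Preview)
Your proposal is correct and matches the paper's approach exactly: the paper gives no separate proof of Proposition \ref{p:l2} at all, introducing it with ``To summarize, we have now established the following result'' immediately after Lemmas \ref{lem:L2even}, \ref{lem:L2ext} and \ref{lem:L2odd}. The only additional remark the paper makes is the same bookkeeping note you mention, that the last row of Table \ref{tab:l2} appears in Table \ref{tab:main} under the ${\rm L}_3(2)$ labelling.
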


\renewcommand{\arraystretch}{1.1}
\begin{table}[h]
$$\begin{array}{lllll} \hline
G & H & r & \Ed(G) & \mbox{Conditions} \\ \hline
{\rm \Gamma L}_2(q) & \Normalizer_{G}({\rm D}_{2(q+1)}) & r & r & \mbox{$r=q-1$ Mersenne prime} \\
{\rm \Gamma L}_{2}(8) & \Normalizer_{G}({\rm P}_1), \Normalizer_{G}({\rm D}_{14}) & 3 & 3,9 & \\ 
{\rm PGL}_{2}(q) & \Normalizer_{G}({\rm P}_{1}) & 2 & 2^i, \, 1 \leqs i \leqs e+1 & \mbox{$q=2^{e+1}-1$ Mersenne prime} \\
{\rm L}_2(q) & {\rm P}_1 & r & r^i, \, 1 \leqs i \leqs e & q=2r^e-1 \\
& {\rm P}_1, {\rm D}_{2(q-1)} & r & r & \mbox{$r=q+1$ Fermat prime} \\
& {\rm D}_{2(q+1)} & r & r & \mbox{$r=q-1$ Mersenne prime} \\
{\rm L}_{2}(9) & {\rm P}_{1} & 5 & 5 & \\
{\rm L}_{2}(8) & {\rm P}_{1}, {\rm D}_{14} & 3 & 3,9 & \\
{\rm L}_{2}(7) & \SSS_4 & 7 & 7 & \\ \hline
\end{array}$$
\caption{The cases $(G,H,r)$ in Proposition \ref{p:l2}}
\label{tab:l2}
\end{table}
\renewcommand{\arraystretch}{1}

\begin{lemma}\label{lem:L3}
Theorem \ref{t:main2} holds if $G_0 = {\rm L}_3(q)$.
\end{lemma}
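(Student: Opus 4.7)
The strategy follows that used for ${\rm L}_2(q)$ in Lemmas \ref{lem:L2even}--\ref{lem:L2odd}. Write $q = p^f$, set $H_0 = H \cap G_0$, and let $M$ be a maximal subgroup of $G_0$ containing $H_0$. By \eqref{e:g0}, property \eqref{e:star} for $G$ forces it for $(G_0,M)$, so the plan is first to determine which maximal $M$ support \eqref{e:star} inside $G_0$, and then to treat extensions $G_0 < G \leqs \Aut(G_0)$. Small cases (say $q \leqs 11$) are checked directly in \textsf{GAP} \cite{GAP} by computing the permutation character $1_M^{G_0}$ and reading off the derangement orders.

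For $q > 11$ we proceed by Aschbacher class, using \cite[Tables 8.3--8.4]{BHR}. The primitive prime divisor $\ell_3(q)$ exists and divides $|G_0|$ but not $|M|$ whenever $M$ is of type $\GL_2(q)$, a subfield subgroup ${\rm L}_3(q_0)$ or ${\rm U}_3(q_0)$, the orthogonal subgroup ${\rm PGL}_2(q)$, or one of the finitely many class-$\mathcal{S}$ subgroups. Pairing $\ell_3(q)$ with either $p$, $\ell_2(q)$, or another prime divisor of $|G_0|$ obtained via Lemmas \ref{lem:numerical}--\ref{l:ppower}, we exhibit two primes in $\pi(G_0) \setminus \pi(M)$, contradicting Lemma \ref{lem:power order}(i). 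The Singer normalizer $M$ of type $\GL_1(q^3)$ satisfies $|M| = 3(q^2+q+1)/(3,q-1)$, so $\pi(M)$ is contained in $\{3\} \cup \pi((q^2+q+1)/(3,q-1))$ and again two primes of $\pi(G_0)$ (for instance $p$ and $\ell_2(q)$) lie outside $\pi(M)$. This eliminates every non-parabolic possibility.

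The parabolic case $M \in \{{\rm P}_1,{\rm P}_2\}$ is therefore the only source of examples, and since the two classes are swapped by the graph automorphism it suffices to treat ${\rm P}_1$. Here $|G_0:{\rm P}_1| = q^2+q+1$, and an element of $G_0$ is a derangement on $G_0/{\rm P}_1$ precisely when it fixes no $1$-space of the natural module; such elements are exactly the nontrivial conjugates of the Singer torus of order $n := (q^2+q+1)/(3,q-1)$. Property \eqref{e:star} is thus equivalent to $n$ being a prime power $r^a$, and Lemma \ref{l:nagell}(i)--(iii) together with Remark \ref{r:isom}(ii) constrains $q$ and $r$ and forces $a \in \{1,2\}$. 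Conversely, in these cases $\Ed(G_0) \in \{\{r\},\{r,r^2\}\}$ by the Singer-cycle description, recovering the two rows of Table \ref{tab:main}.

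Finally, for $G_0 \lneq G \leqs \Aut(G_0) = {\rm P\Gamma L}_3(q).\la\tau\ra$, with $\tau$ the graph automorphism, we may assume $H_0 \in \{{\rm P}_1,{\rm P}_2\}$ by the above. Filtering through each cyclic quotient $G/G_1$ (with $G_0 \leqs G_1 \lneq G$) via Lemma \ref{l:gms}, we verify that each outer coset $G_1\gamma$ either consists entirely of elements with (unique) fixed points or contains an element of non-$r$-power order acting fixed-point-freely; in the latter case property \eqref{e:star} fails, and in the former no new example beyond $G = G_0$ arises. Concretely, a field or graph automorphism acting on $G_0/{\rm P}_1$ tends to introduce elements whose orders are products of $r$ with an automorphism order, killing \eqref{e:star}. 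The main technical obstacle is the meticulous case-by-case analysis of small $q$ in the non-parabolic elimination, where Zsigmondy primes may be absent and explicit computation via Lemma \ref{l:ppower} (or a \textsf{GAP} check) is required to produce the two distinct primes outside $\pi(M)$.
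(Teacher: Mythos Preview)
Your elimination of the non-parabolic maximal subgroups via Lemma \ref{lem:power order}(i) has a genuine gap at $M = {\rm SO}_3(q) \cong {\rm PGL}_2(q)$ (with $q$ odd). Here $|M| = q(q^2-1)$, so $\pi(G_0)\setminus\pi(M)$ consists precisely of the primes dividing $q^2+q+1$ that do not divide $q^2-1$; when $(q^2+q+1)/(3,q-1)$ is itself a prime power $r^a$ (for instance $q=13$, where $q^2+q+1 = 3\cdot 61$), there is only \emph{one} such prime, namely $r$, and your two-prime argument breaks down. Nothing in Lemmas \ref{lem:numerical}--\ref{l:ppower} rescues this, since those results bound $q^a\pm1$ and not the cyclotomic value $q^2+q+1$. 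The paper avoids this by a different mechanism: it first eliminates every $M$ that fails to contain an element of order $(q^2-1)/(3,q-1)$ (a non-prime-power for $q\geqs 8$), reducing to $M\in\{{\rm P}_1,{\rm P}_2,{\rm SO}_3(q)\}$, and then in the ${\rm SO}_3(q)$ case exhibits a unipotent derangement of Jordan type $[J_2,J_1]$. The point is that ${\rm SO}_3(q)$ meets only the regular unipotent class $[J_3]$, so elements of type $[J_2,J_1]$ have order $p\neq r$ and kill \eqref{e:star}. This unipotent-class argument is the missing idea. (Incidentally, your claim that $\ell_3(q)\nmid |{\rm U}_3(q_0)|$ when $q=q_0^2$ is also false in general, since $\ell_6(q_0)\mid q_0^3+1$; the subfield cases do go through, but not with the prime you name.)

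Your treatment of $G\neq G_0$ is also too compressed to be a proof. When $G$ contains the graph automorphism, $H_0$ cannot equal ${\rm P}_1$; one must instead rule out the novelty possibilities $H_0$ of type ${\rm P}_{1,2}$ or ${\rm GL}_2(q)\times{\rm GL}_1(q)$, which the paper does by producing derangements in $G_0$ of order $(q^2-1)/(3,q-1)$ or of Jordan type $[J_3]$ respectively. And when $G={\rm PGL}_3(q)$ (so $d=3$), the relevant obstruction is not Lemma \ref{l:gms} but the existence in $G$ of a cyclic torus of the full order $q^2+q+1$, which is never an $r$-power since $(q^2+q+1)_3=3$. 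Only after these reductions does Lemma \ref{l:gms}, applied to a field automorphism of $3$-power order (using Lemma \ref{l:nagell}(iii) to force $f$ to be a $3$-power), finish the argument.
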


\begin{proof}
Set $d=(3,q-1)$ and note that $G_0$ contains elements of order $(q^2+q+1)/d$ and $(q^2-1)/d$. We may assume that $q \geqs 3$ since ${\rm L}_{3}(2) \cong {\rm L}_{2}(7)$. If $3 \leqs q \leqs 7$, then we can use \textsf{GAP} to verify the desired result; we find that \eqref{e:star} holds if and only if $G = {\rm L}_{3}(q)$, $H \in \{{\rm P}_{1},{\rm P}_{2}\}$ and $r = (q^2+q+1)/d$, in which case $\Ed(G) = \{r\}$ (note that $(q^2+q+1)/d$ is a prime number for all $q \in \{3,4,5,7\}$). For the remainder, we will assume that $q \geqs 8$. In particular, note that $(q^2-1)/d$ is not a prime power (indeed, it is easy to check that $(q^2-1)/d$ is a prime-power if and only if $q=3$ or $7$).

\vs

\noindent \emph{Case 1.} $G=G_0$.

\vs

First assume that $G = {\rm L}_{3}(q)$. The possibilities for $H$ are given in \cite[Tables 8.3 and 8.4]{BHR}. We can immediately eliminate any subgroup $H$ that does not contain an element of order $(q^2-1)/d$, so this implies that $H$ is either a maximal parabolic subgroup, or $H = {\rm SO}_{3}(q)$ (with $q$ odd). 

Suppose that $H$ is a maximal parabolic subgroup. Without loss of generality, we may assume that $H  = {\rm P}_{1}$ (the actions of $G$ on $1$-spaces and $2$-spaces are permutation isomorphic), so $|H| = q^3(q-1)(q^2-1)/d$. We claim that $G$ has property \eqref{e:star} if and only if one of the following holds:
\renewcommand{\arraystretch}{1.2}
\begin{equation}\label{e:ab}
\begin{array}{ll}
\mbox{(a)} & \mbox{$d=1$ and $q^2+q+1 = r$; or} \\
\mbox{(b)} & \mbox{$d=3$ and $q^2+q+1  \in \{3r,3r^2\}$.}
\end{array}
\end{equation}

\renewcommand{\arraystretch}{1}

To see this, first notice that any element $x \in G$ of order $(q^2+q+1)/d$ is a derangement. Therefore, if \eqref{e:star} holds then $(q^2+q+1)/d = r^e$ for some positive integer $e$, and by applying Lemma \ref{l:nagell} we deduce that (a) or (b) holds.  Conversely, suppose that (a) or (b) holds. We claim that 
$$\Ed(G) = \left\{\begin{array}{ll}
\{r,r^2\} & \mbox{if $d=3$ and $q^2+q+1 = 3r^2$} \\
\{r\} & \mbox{otherwise.}
\end{array}\right.$$
To see this, we use the fact that the action of $G$ on $1$-spaces is doubly transitive, so the corresponding permutation character has the form $1_H^G = 1+\chi$ for some irreducible character $\chi \in {\rm Irr}(G)$ of degree $q(q+1)$. By inspecting the character table of $G$ (see \cite[Table 2]{FS}, for example), we see that $\chi(x)=-1$ if and only if $x$ has order $r$ (or $r^2$ if $d=3$ and $q^2+q+1 = 3r^2$). This justifies the claim.

Now assume that $H = {\rm SO}_{3}(q)$, so $q$ is odd. Here elements of order $(q^2+q+1)/d$ are derangements, and so is any unipotent element with Jordan form $[J_2,J_1]$ (where $J_i$ denotes a standard unipotent Jordan block of size $i$). Therefore, \eqref{e:star} does not hold in this situation.

\vs

\noindent \emph{Case 2.} $G \neq G_0$.

\vs

To complete the proof of the lemma, we may assume that $G \neq G_0$ and $q \geqs 8$. Let $M$ be a maximal subgroup of $G_0$ containing $H_0 := H \cap G_0$. From the analysis in Case 1, we may assume that $M = {\rm P}_{1}$, in which case $H_0$ is either equal to ${\rm P}_{1}$, or it is a non-maximal subgroup of type ${\rm P}_{1,2}$ (a Borel subgroup of $G_0$) or ${\rm GL}_{2}(q) \times {\rm GL}_{1}(q)$. We can quickly eliminate the latter two possibilities. For instance, if $H_0$ is a Borel subgroup then $\Delta_{H_0}(G_0)$ contains all elements of order $(q^2-1)/d$, so \eqref{e:star} does not hold (see \eqref{e:g0}). Similarly, if $H_0$ is of type  ${\rm GL}_{2}(q) \times {\rm GL}_{1}(q)$ then $\Delta_{H_0}(G_0)$ contains elements of order $(q^2+q+1)/d$, and also unipotent elements with Jordan form $[J_3]$. 

Therefore, we may assume that $H_0 = {\rm P}_{1}$, with $q \geqs 8$. To show that \eqref{e:star} does not hold, we may as well assume that we are in one of the two cases (a) and (b) in \eqref{e:ab} above (otherwise the conclusion is clear). Note that the condition $H_0 = {\rm P}_{1}$ implies that $G \leqs {\rm \Gamma L}_{3}(q)$ (that is, $G$ does not contain a graph automorphism). Also note that we may identify $\Omega$ with the set of $1$-dimensional subspaces of the natural ${\rm L}_{3}(q)$-module. Note that $r>3$.

First assume that $G = {\rm PGL}_{3}(q)$, so $d=3$ since we are assuming that $G \neq G_0$.
Here $G$ has a cyclic maximal torus $\la x \ra$ of order $q^2+q+1$. Then $x$ is a derangement and thus \eqref{e:star} does not hold since $q^2+q+1$ is not a prime power (note that $(q^2+q+1)_3=3$). 

For the remainder, we may assume that $q=p^f$ and $f \geqs 2$ (also recall that $q \geqs 8$). In view of \eqref{e:ab}, Lemma \ref{l:nagell}(iii) implies that $f$ is a $3$-power. To deduce that \eqref{e:star} does not hold, we may assume that $|G:G_0|$ is a prime number. Since $G \leqs {\rm \Gamma L}_{3}(q)$ and $f$ is a $3$-power, we may assume that $|G:G_0|=3$ and thus 
$G=G_0.\la \phi \ra$ or $G_0.\la \delta\phi\ra$, where $\phi$ is a field automorphism of order $3$ and $\delta$ is an appropriate diagonal automorphism ${\rm diag}(\omega_1, \omega_2,\omega_3) \in {\rm PGL}_{3}(q)$ (modulo scalars). In both cases, the result follows by applying Lemma \ref{l:gms}. For example, $\delta\phi$ has more than one fixed point on $\Omega$, so Lemma \ref{l:gms} implies that the coset $G_0\delta\phi$ contains derangements, none of which has $r$-power order. In view of this final contradiction, we conclude that \eqref{e:star} does not hold if $G \neq G_0$. 
\end{proof}

\begin{lemma}\label{lem:U3}
Theorem \ref{t:main2} holds if $G_0 = {\rm U}_3(q)$.
\end{lemma}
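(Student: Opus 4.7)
The plan is to mirror the proof of Lemma \ref{lem:L3}: handle a small finite list of $q$ by direct computation, and for all larger $q$ exhibit, for each maximal subgroup $H$ of $G_0={\rm U}_3(q)$, two derangements in $G_0$ whose orders are powers of distinct primes, thereby violating \eqref{e:star}. Set $d=(3,q+1)$. The relevant element orders come from the three classes of maximal tori of $G_0$ -- the ``compact'' (orthogonal) torus of order $(q+1)^2/d$, the mixed torus of order $(q^2-1)/d$, and the Coxeter torus of order $(q^2-q+1)/d$ -- together with the unipotent classes $[J_3]$ (regular, order $p$ or $4$) and $[J_2,J_1]$. First I would dispose of $q\in\{3,4,5,7\}$ (and a few larger values as needed) by computing $1_H^{G_0}$ in \textsf{GAP}, exactly as in the proof of Lemma \ref{lem:L3}. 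Assume $q\geqs 8$ from now on.

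For $G=G_0$, the maximal subgroups are recorded in \cite[Tables 8.5 and 8.6]{BHR}; the main case is the Borel $H={\rm P}_1$ (since $G_0$ has Lie rank one). Here I would produce two coprime derangements: (i) any generator of the Coxeter torus, which is anisotropic and hence fixes no isotropic $1$-space, giving a derangement of order $(q^2-q+1)/d$; and (ii) a regular element $g$ of the compact torus, chosen to have three distinct eigenvalues in $\F_{q^2}^*$ and order $q+1$, whose three $1$-dimensional eigenspaces are all nondegenerate, so $g$ has no fixed isotropic $1$-space and is therefore also a derangement. Since $\gcd(q+1,(q^2-q+1)/d)$ divides $d\in\{1,3\}$, the only way both orders can be powers of a common prime $r$ is $r=3$, $d=3$, and $q+1$ and $(q^2-q+1)/3$ both $3$-powers; a Zsigmondy/Nagell argument as in Lemma \ref{l:nagell}(iii), applied to $\ell_6(q)$, will rule this out for $q\geqs 8$.

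For the remaining maximal subgroups, the generic Coxeter-torus element remains a derangement (since its order is coprime to $|H|$ in every case below), and the second coprime-order derangement is supplied by a well-chosen further class:
\begin{itemize}
\item Stabilizer of a nondegenerate $1$-space (type $\GU_2\times\GU_1$): a regular unipotent of order $p$ fixes a unique $1$-space, which is isotropic, hence does not lie in this subgroup.
\item ${\rm SO}_3(q)\cong \PGL_2(q)$ ($q$ odd): a regular element of the compact torus of order $q+1$ is not conjugate into ${\rm SO}_3(q)$.
\item Subfield subgroups ${\rm U}_3(q_0)$ with $q=q_0^e$, and the exceptional ${\rm L}_2(7)<{\rm U}_3(q)$ cases: any prime divisor of $q-1$ or $q+1$ missing $|H|$ (guaranteed by Lemma \ref{l:nt} and Zsigmondy) gives the second derangement order.
\item Normalizer of the Coxeter torus, of order $3(q^2-q+1)/d$: here $|\pi(G_0)\setminus\pi(H)|\geqs 2$ by Zsigmondy, so \eqref{e:star} fails immediately.
\end{itemize}

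Finally, for $G\neq G_0$ I would argue exactly as in Lemma \ref{lem:L3}: by \eqref{e:g0} the intersection $H\cap G_0$ lies in some maximal subgroup of $G_0$, and after reducing to the case $|G:G_0|$ prime (so $G=G_0.\la\phi\ra$ or $G_0.\la\delta\phi\ra$ for a field automorphism $\phi$ and possibly a diagonal twist $\delta$), Lemma \ref{l:gms} produces derangements in an outer coset whose orders are not $r$-powers, yielding the required contradiction. The principal obstacle throughout is the verification that a regular element of the compact torus really is a derangement on isotropic $1$-spaces (i.e.\ that its three eigenspaces are all nondegenerate); once this is established, together with coprimality of $q+1$ and $(q^2-q+1)/d$ modulo $d$, the argument is essentially routine bookkeeping of torus containments in each maximal subgroup.
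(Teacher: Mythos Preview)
Your approach is in the right spirit but does considerably more work than necessary, and contains one genuine gap.

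First, two simplifications you miss. Since ${\rm U}_3(q)$ does not appear in Table~\ref{tab:main}, the task is simply to show that \eqref{e:star} \emph{fails} for every $(G,H)$, and by \eqref{e:g0} it suffices to do this for $G=G_0$. Your entire final paragraph on $G\neq G_0$ (field automorphisms, Lemma~\ref{l:gms}, etc.) is therefore superfluous. Second, the paper does not walk through the full list of maximal subgroups case by case. Instead it observes that $(q^2-1)/d$ is not a prime power once $q\geqs 7$, so any element of this order is a derangement of non-prime-power order whenever $H$ fails to contain one; this single stroke eliminates every maximal subgroup except ${\rm P}_1$, ${\rm N}_1$ and ${\rm SO}_3(q)$, and in each of these three cases a Coxeter-torus element of order $(q^2-q+1)/d$ is a derangement. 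A second derangement of coprime order is then supplied by ${\rm diag}(1,\omega,\omega^{-1})$ (for ${\rm P}_1$), by $[J_3]$ (for ${\rm N}_1$), and by $[J_2,J_1]$ (for ${\rm SO}_3(q)$).

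Now the gap: your proposed second derangement for $H={\rm SO}_3(q)$ does not work. The nonsplit torus of ${\rm SO}_3(q)\cong{\rm PGL}_2(q)$ is cyclic of order $q+1$, and under the embedding ${\rm PGL}_2(q)\hookrightarrow{\rm PSU}_3(q)$ via the symmetric square a generator acquires eigenvalues $\{1,\zeta,\zeta^{-1}\}$ with $|\zeta|=q+1$. Since semisimple classes in ${\rm SU}_3(q)$ are determined by their eigenvalue multisets (centralizers are connected), the compact-torus element ${\rm diag}(1,\omega,\omega^{-1})$ \emph{is} conjugate into ${\rm SO}_3(q)$ and is therefore not a derangement. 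If you try to repair this by choosing some other order-$(q+1)$ element with no eigenvalue equal to $1$, you will need a careful argument that such an element exists in $G_0$ and is not conjugate (even up to the central $\mu_d$) to an ${\rm SO}_3(q)$-element; this is not ``routine bookkeeping''. The paper avoids the issue entirely by using the unipotent class $[J_2,J_1]$: the only nontrivial unipotent class in ${\rm SO}_3(q)\cong{\rm PGL}_2(q)$ embeds as $[J_3]$, so $[J_2,J_1]$ is a derangement of order $p$, which is coprime to $(q^2-q+1)/d$.

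A minor point: your coprimality worry in the ${\rm P}_1$ case is unnecessary. From $q^2-q+1=(q+1)(q-2)+3$ one checks that $(q^2-q+1)_3\leqs 3$, so $q+1$ and $(q^2-q+1)/d$ are always coprime; no Nagell-type argument is needed.
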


\begin{proof}
Set $d=(3,q+1)$ and observe that $G_0$ contains elements of order $(q^2-q+1)/d$ and $(q^2-1)/d$. Note that $(q^2-1)/d$ is a prime power if and only if $q \in \{3,5\}$. In order to show that \eqref{e:star} does not hold, we may assume that $G = G_0$.

The cases $q \in \{3,4,5\}$ can be handled directly, using \textsf{GAP}, so for the remainder we will assume that $q \geqs 7$. Let $V$ be the natural $G_0$-module, and let ${\rm P}_{1}$ (respectively ${\rm N}_{1}$) be the $G_0$-stabilizer of a $1$-dimensional totally isotropic (respectively, non-degenerate) subspace of $V$. Note that ${\rm N}_{1}$ is a subgroup of type ${\rm GU}_{1}(q) \times {\rm GU}_{2}(q)$.

We can immediately rule out any subgroup $H$ that does not contain elements of order $(q^2-1)/d$, which means that we may assume $H$ is of type ${\rm P}_{1}$, ${\rm N}_{1}$ or $O_{3}(q)$ ($q$ odd). In all three cases, elements of order $(q^2-q+1)/d$ are derangements. In addition, if $H = {\rm N}_{1}$ (respectively, ${\rm SO}_{3}(q)$) then unipotent elements with Jordan form $[J_3]$ (respectively, $[J_2,J_1]$) are derangements. Finally, suppose that $H = {\rm P}_{1}$. Let $\omega \in \mathbb{F}_{q^2}$ be an element of order $q+1$ and set $x = {\rm diag}(1,\omega,\omega^{-1}) \in G$ (modulo scalars) with respect to an orthonormal basis for $V$. Then $x$ does not fix a totally isotropic $1$-space, whence $x$ is a derangement of order $q+1$.
\end{proof}

Having handled the low-dimensional groups, we are now in a position to complete the proof of Theorem \ref{t:main2} for linear and unitary groups.

\begin{lemma}\label{lem:Ln}
Theorem \ref{t:main2} holds if $G_0 = {\rm L}_n^{\e}(q)$. 
\end{lemma}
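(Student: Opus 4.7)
Having handled $n \leqs 3$ in Proposition \ref{p:l2} and Lemmas \ref{lem:L3} and \ref{lem:U3}, the plan is to prove that for $n \geqs 4$ no new cases arise, i.e.\ property \eqref{e:star} fails for every almost simple primitive permutation group with socle $G_0 = {\rm L}_n^{\e}(q)$. By the reduction \eqref{e:g0} it suffices to argue inside $G_0$: it is enough to show that for every maximal subgroup $M$ of $G_0$ containing $H_0$, the set $\Delta_M(G_0)$ contains two derangements of coprime orders greater than $1$. By Lemma \ref{lem:power order}(i) this is incompatible with \eqref{e:star}, so it suffices to produce two distinct primes in $\pi(G_0)\setminus \pi(M)$.

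The main tool is the theory of primitive prime divisors. Writing $|G_0|$ in terms of the cyclotomic values $q^i-1$ (linear case) or $q^i-(-1)^i$ (unitary case), the set $\pi(G_0)$ contains $\ell_e(q)$ for all relevant $e$, existence being guaranteed by Zsigmondy's theorem outside a tiny exceptional list. I would then run through the Aschbacher classes $\mathcal{C}_1,\ldots,\mathcal{C}_8$ and the almost simple class $\mathcal{S}$ for $M$ (as in \cite{KL}) and, in each class, exhibit two distinct primes $s_1, s_2 \in \pi(G_0)\setminus \pi(M)$. For the reducible class $\mathcal{C}_1$ (stabilisers of $k$-dimensional totally singular or non-degenerate subspaces), $|M|$ involves only $q^i\pm 1$ with $i \leqs \max(k,n-k)$, so two large ppds fall outside $\pi(M)$; here Lemma \ref{lem:numerical} packages exactly the fact that the relevant product of two cyclotomic factors has two distinct prime divisors outside $\pi(q^2-1)$, up to a short exceptional list. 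For the field-extension and subfield classes ($\mathcal{C}_3$, $\mathcal{C}_5$), $|G_0:M|$ is a cyclotomic ratio, and Lemma \ref{l:ppower} pinpoints the handful of $(n,q,\e)$ for which this ratio is a prime power; all other ratios furnish two coprime primes in $\pi(G_0)\setminus\pi(M)$.

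For the remaining geometric classes $\mathcal{C}_2$, $\mathcal{C}_4$, $\mathcal{C}_6$, $\mathcal{C}_7$, $\mathcal{C}_8$ and the almost simple class $\mathcal{S}$, $|M|$ is much smaller than $|G_0|$ for $n \geqs 4$, so ppds $\ell_e(q)$ with $e$ close to $n$ (linear) or $2n$ (unitary) lie outside $\pi(M)$ and produce the required two distinct primes. When $G \neq G_0$, i.e.\ when diagonal, field, or graph automorphisms are adjoined, the argument is supplemented by Lemma \ref{l:gms}: any outer coset $G_0\gamma$ that is free of derangements forces every element of $G_0\gamma$ to have a unique fixed point on $\Omega$, and a count of fixed $k$-subspaces under the relevant automorphism yields a numerical obstruction or else produces a derangement of order incompatible with being an $r$-element.

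The principal obstacle will be bookkeeping the small-rank or small-$q$ configurations in which Lemma \ref{lem:numerical} drops into one of its exceptional branches (ii)--(v), together with the short list of $(n,q,\e)$ in Lemma \ref{l:ppower} where a cyclotomic ratio is a prime power. For those finitely many residual cases I would verify \eqref{e:star} fails directly, either by a tailored element-order argument (exhibiting, say, a unipotent element of a specific Jordan type together with a semisimple ppd-element, both derangements but with coprime orders) or by a short \textsf{GAP} computation with the permutation character, as was done in the sporadic, alternating, and low-dimensional linear proofs. Collecting all cases gives that no row of Table \ref{tab:main} has socle ${\rm L}_n^{\e}(q)$ with $n \geqs 4$, completing the proof.
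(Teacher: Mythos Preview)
Your overall plan is sound and would work, but it is not the route the paper takes, and one step in your sketch is loose.

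\textbf{Comparison with the paper.} You propose to march through the Aschbacher classes $\mathcal{C}_1,\ldots,\mathcal{C}_8,\mathcal{S}$ and, for each maximal $M$, exhibit two coprime-order derangements. The paper instead works in the opposite direction: for each parity configuration of $n$ it writes down a \emph{single} semisimple element $x\in G_0$ of a carefully chosen order (for example $(q^{m+2}-\e)(q^{m-2}-\e)/e$ when $n=2m$ with $m$ odd, or a Singer element of order $(q^n-1)/e$ in the small cases), invokes Lemmas~\ref{lem:numerical} and~\ref{l:ppower} to show $|x|$ is not a prime power, and then appeals to the existing classifications of maximal overgroups of such elements in \cite{Ber}, \cite{GK} and \cite{BGK} to conclude that $x$ is a derangement unless $M$ lies in a list of at most two or three subgroups. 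Those residual $M$ are then dispatched with a ppd element plus a unipotent element of a specific Jordan type. This short-circuits the full Aschbacher analysis; in particular the classes $\mathcal{C}_2,\mathcal{C}_4,\ldots,\mathcal{C}_8,\mathcal{S}$ never have to be treated individually, because none of them contains the chosen $x$.

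\textbf{A loose step.} Your $\mathcal{C}_1$ claim that ``two large ppds fall outside $\pi(M)$'' fails for the extremal parabolics $P_1$ (and $N_1$ in the unitary case): there $|M|$ involves $q^i-1$ for all $i\leqs n-1$, so generically only $\ell_n(q)$ (or $\ell_{2n}(q)$) lies in $\pi(G_0)\setminus\pi(M)$. You cannot produce two such primes; you must instead exhibit a single derangement of non-prime-power order (the paper's Singer element does exactly this, via Bereczky's theorem and Lemma~\ref{l:ppower}). Your fallback clause covers this, but it should be flagged as a genuine case, not an exception.

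\textbf{An unnecessary step.} Your paragraph on $G\neq G_0$ and Lemma~\ref{l:gms} is superfluous here. The containment \eqref{e:g0} already gives $\Delta_M(G_0)\subseteq\Delta(G)$, so two coprime-order derangements in $G_0$ kill \eqref{e:star} for every almost simple $G$ with that socle; no outer-coset analysis is needed for $n\geqs 4$. (Lemma~\ref{l:gms} was only used in Lemmas~\ref{lem:L2odd} and~\ref{lem:L3}, where genuine examples live in $G_0$ and one must show they do not extend.)
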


\begin{proof}
We may assume that $n \geqs 4$. Set $d=(n,q-\e)$ and $e=(q-\e)d$. Let $V$ be the natural $G_0$-module. Let ${\rm P}_{i}$ be the $G_0$-stabilizer of a totally isotropic $i$-dimensional  subspace of $V$ (so ${\rm P}_{i}$ is a maximal parabolic subgroup of $G_0$, and we can take any $i$-space if $\e=+$). Similarly, if $\e=-$ then let ${\rm N}_{i}$ denote the $G_0$-stabilizer of an $i$-dimensional non-degenerate subspace of $V$ (so ${\rm N}_{i}$ is of type ${\rm GU}_{i}(q) \times {\rm GU}_{n-i}(q)$). In order to show that  \eqref{e:star} does not hold, we may assume that $G=G_0$. There are several cases to consider.

\vs

\noindent \emph{Case 1.} $n=2m$ and $m \geqs 4-\e$ is odd. 

\vs

First assume that $m \geqs 5$. As in the proof of \cite[Proposition 3.11]{BTV}, let $x \in G$ be an element of order $(q^{m+2}-\e)(q^{m-2}-\e)/e$. Then $|x|$ is not a prime power (see Lemma \ref{lem:numerical}), and \cite[Table II]{GK} indicates that $x$ is a derangement unless one of the following holds:
\begin{itemize}\addtolength{\itemsep}{0.2\baselineskip}
\item[{\rm (a)}] $\e=+$ and $H={\rm P}_{m-2}$ (or ${\rm P}_{m+2}$);
\item[{\rm (b)}] $\e=-$ and $H = {\rm N}_{m-2}$.
\end{itemize}
In (a), any element of order $\ell_{n}(q)$ or $\ell_{n-1}(q)$ is a derangement, and elements of order $\ell_{n}(q)$ and $\ell_{2(n-1)}(q)$ are derangements in case (b).  

Now assume $m=3$, so $(\e,n)=(+,6)$. 
Let $x \in G$ be an element of order $(q^6-1)/e$, which is not a prime power by Lemma \ref{l:ppower}. Here $x$ is a \emph{Singer element}, and the main theorem of \cite{Ber} implies that $x$ is a derangement, unless  
$H$ is a field extension subgroup, so we have reduced to the case where $H$ is of type ${\rm GL}_{3}(q^2)$ or ${\rm GL}_{2}(q^3)$. In this situation, elements of order $\ell_{5}(q)$ are derangements, and so are unipotent elements with Jordan form $[J_2,J_1^4]$. 

\vs

\noindent \emph{Case 2.} $n=2m$ and $m \geqs 3-\e$ is even. 

\vs

First assume that $m \geqs 4$. Let $x \in G$ be an element of order $(q^{m+1}-\e)(q^{m-1}-\e)/e$. Then Lemma \ref{lem:numerical} implies that $|x|$ is not a prime power, and from \cite[Table II]{GK} we deduce that $x$ is a derangement unless 
one of the following holds:
\begin{itemize}\addtolength{\itemsep}{0.2\baselineskip}
\item[{\rm (a)$'$}] $\e=+$ and $H={\rm P}_{m-1}$ (or ${\rm P}_{m+1}$);
\item[{\rm (b)$'$}] $\e=-$ and $H= {\rm N}_{m-1}$.
\end{itemize}
To deal with these cases, we can repeat the argument in Case 1. 

Now assume $m=2$, so $(\e,n)=(+,4)$. 
By applying the main theorem of \cite{Ber}, we deduce that elements of order $(q^4-1)/e$ are derangements unless $H$ is a field extension subgroup of type ${\rm GL}_{2}(q^2)$. Moreover, since $(q^4-1)/e$ is not a prime power (see Lemma \ref{l:ppower}), we can assume that $H$ is of type ${\rm GL}_{2}(q^2)$. Here elements of order $\ell_{3}(q)$ and unipotent elements with Jordan form $[J_2,J_1^2]$ are derangements. 

\vs

\noindent \emph{Case 3.} $\e=+$, $n=2m+1$ and $m \geqs 2$. 

\vs

If $G = {\rm L}_{11}(2)$, then any element of order $2^{11}-1 = 23 \cdot 89$ is a derangement, unless $H$ is a field extension subgroup of type ${\rm GL}_{1}(2^{11})$, in which case elements of order $2^{10}-1$ are derangements. For the remainder, we may assume that $(n,q) \neq (11,2)$.

Let $x \in G$ be an element of order $(q^{m+1}-1)(q^{m}-1)/e$. By Lemmas \ref{lem:numerical} and \ref{l:ppower}, $|x|$ is not a prime power, so we may assume that $H = {\rm P}_{m}$ (see \cite[Table II]{GK}). If $m \geqs 3$, then elements of order $\ell_{n}(q)$ or $\ell_{n-2}(q)$ are derangements. Similarly, if $m=2$ then we can take elements of order $\ell_{5}(q)$ or $\ell_{4}(q)$.

\vs

\noindent \emph{Case 4.} $\e=-$, $n=2m+1$ and $m \geqs 4$. 

\vs

Fix $x \in G$, where 
$$|x| = \left\{\begin{array}{ll}
(q^{m+1}+1)(q^{m}-1)/e & \mbox{$m$ even} \\
(q^{m+2}+1)(q^{m-1}-1)/e & \mbox{$m$ odd.} 
\end{array}\right.$$
By Lemma \ref{lem:numerical}, $|x|$ is not a prime power, and \cite[Table II]{GK} indicates that $x$ has fixed points only if $H$ stabilizes a subspace $U$ of $V$ with $\dim U \geqs 2$. Therefore, we may assume that $H$ has this property, in which case any element of order $\ell_{2n}(q)$ or $\ell_{n-1}(q)$ is a derangement. 

\vs

\noindent \emph{Case 5.} $\e=-$ and $n \in \{4,5,6,7\}$. 

\vs

First assume that $n=7$. Let $x \in G$ be an element of order $(q^6-1)/d$. Since $|x|$ is not a prime power, by inspecting the list of maximal subgroups of $G$ (see \cite[Tables 8.37 and 8.38]{BHR}) it follows that we can assume that $H \in \{{\rm P}_{3}, {\rm N}_{1}, {\rm SO}_{7}(q)\}$. In all three cases, any element of order $\ell_{14}(q)$ is a derangement. Similarly, elements of order $\ell_{10}(q)$ are derangements, unless $H = {\rm N}_{1}$, in which case any unipotent element with Jordan form $[J_7]$ is a derangement. The case $n=5$ is entirely similar.

Next assume that $n=6$. For now, let us assume that $q \not\in \{2,5\}$. Let $x \in G$ be an element of order $(q^5+1)/d$. Then $|x|$ is not a prime power (see Lemma \ref{l:ppower}) and $H = {\rm N}_{1}$ is the only maximal subgroup of $G$ containing such an element (see \cite[p.767]{GK}). Now, if $H = {\rm N}_{1}$ then any element of order $(q^6-1)/e$ is a derangement of non-prime power order. 

Suppose that $n=6$ and $q \in \{2,5\}$. The case $q=2$ can be checked directly, using \textsf{GAP} for example, so let us assume that $q=5$. Let $x \in G$ be an element of order $(5^6-1)/e = 434$. By inspecting the list of maximal subgroups of $G$ (see \cite[Tables 8.26 and 8.27]{BHR}), we deduce that $x$ is a derangement unless $H$ is of type ${\rm P}_{3}$, ${\rm GL}_{3}(5^2)$ or ${\rm GU}_{2}(5^3)$, so we may assume that $H$ is one of these subgroups, in which case any element of order $\ell_{10}(5)=521$ is a derangement. Suppose that 
$H = {\rm P}_{3}$. Fix an orthonormal basis for $V$ and let $y = {\rm diag}(1,1,\omega,\omega^{-1},\omega^{2},\omega^{-2})$ (modulo scalars), where $\omega \in \mathbb{F}_{25}$ is an element of order $6$. Then $y$ is a derangement. Similarly, if $H$ is of type ${\rm GL}_{3}(5^2)$ or ${\rm GU}_{2}(5^3)$, then any unipotent element with Jordan form $[J_2,J_1^4]$ is a derangement. This eliminates the case $G = {\rm U}_{6}(5)$.

A very similar argument applies if $n=4$. Here the cases $q \in \{2,3\}$ can be checked directly, so let us assume that $q \geqs 4$. Let $x \in G$ be an element of order $(q^3+1)/d$. Then $|x|$ is not a prime power (see Lemma \ref{l:ppower}) and again we reduce to the case $H = {\rm N}_{1}$ (see \cite[p.767]{GK}). We can now take any element of order $(q^4-1)/e$, which will be a  derangement of non-prime power order. 
\end{proof}

\vs

Next, we turn our attention to symplectic groups. Let $G_0 = {\rm PSp}_n(q)$ be a symplectic group with natural module $V$. As before, we will write ${\rm P}_{i}$ (respectively, ${\rm N}_{i}$) for the $G_0$-stabilizer of an $i$-dimensional totally isotropic (respectively, non-degenerate) subspace of $V$. We will also use $n = m \perp (n-m)$ to denote an orthogonal decomposition of $V$ of the form $V = V_1 \perp V_2$, where $V_1$ is a non-degenerate $m$-space. Further, we will say that a semisimple element $x \in G_0$ is of \emph{type} $m \perp (n-m)$ if it fixes such an orthogonal decomposition of $V$, acting irreducibly on $V_1$ and $V_2$. Similar notation is used in \cite{BGK,BTV,GK}.

To begin with, we will assume that $n \geqs 6$; the special case $G_0 = {\rm PSp}_{4}(q)$ will be handled separately in Lemma \ref{lem:Sp4}.

\begin{lemma}\label{lem:Sp}
Theorem \ref{t:main2} holds if $G_0 = {\rm PSp}_n(q)$ and $n \geqs 6$.
\end{lemma}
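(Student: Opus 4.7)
The plan follows the blueprint of Lemma \ref{lem:Ln}. Write $n = 2m$ with $m \geqs 3$, let $V$ be the natural symplectic $G_0$-module, and set $d = (2,q-1)$. In view of \eqref{e:g0}, it suffices to show that \eqref{e:star} fails when $G = G_0$, and I will do so by exhibiting two derangements whose combined orders are incompatible with \eqref{e:star}: typically one of non-prime-power order, together with a second element that rules out the short list of maximal overgroups forced by the first.

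The main device is to take, for a suitably chosen $k$, a semisimple element $x \in G_0$ of type $2k \perp (n-2k)$ acting irreducibly on each summand of a non-degenerate orthogonal decomposition $V = V_1 \perp V_2$, so that $|x|$ divides $(q^k+1)(q^{m-k}+1)/d$. By \cite[Table II]{GK}, the maximal overgroups of such an $x$ in $G_0$ form a very short list, consisting essentially of the subspace stabilizers ${\rm P}_{2k}$, ${\rm P}_{2(m-k)}$, ${\rm N}_{2k}$ and ${\rm N}_{2(m-k)}$, together in a few degenerate parameter ranges with certain field extension or classical subgroups. Whenever $k$ and $m-k$ are both at least $2$, Lemma \ref{lem:numerical} guarantees that $|x|$ admits two distinct prime divisors, neither of which divides $q^2-1$; thus $|x|$ is not a prime power, and under \eqref{e:star} the element $x$ must be conjugate into $H$, forcing $H$ into the short list above. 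For each surviving candidate, a second derangement of non-matching order is produced, typically a primitive prime divisor element of order $\ell_n(q)$ or $\ell_{n-2}(q)$, or a unipotent element of a prescribed Jordan type.

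Concretely, I would split the argument according to the parity of $m$. For $m$ odd with $m \geqs 5$ take $k = (m-1)/2$, giving type $(m-1) \perp (m+1)$; for $m$ even with $m \geqs 6$ take $k = m/2 - 1$, giving type $(m-2) \perp (m+2)$. In both ranges Lemma \ref{lem:numerical} applies with $a,b \geqs 2$, and \cite[Table II]{GK} leaves only subspace stabilizers as candidates for $H$, each of which is killed by an $\ell_n(q)$- or $\ell_{n-2}(q)$-element (together with a unipotent derangement of appropriate Jordan type when a primitive prime divisor fails to exist).

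The main obstacle is the low-rank cases $n \in \{6,8\}$, where the natural near-balanced decomposition $2k \perp (n-2k)$ has $k = 1$, placing us outside the hypothesis $a \geqs 2$ of Lemma \ref{lem:numerical}. Here I would fall back on a Singer-type element of order $(q^m+1)/d$ in $G_0$: by the main result of \cite{Ber} its maximal overgroups are confined to field extension subgroups (of type ${\rm Sp}_{2m/t}(q^t)$, or unitary-type when $m$ is odd), and each such $H$ is then dispatched by a supplementary derangement, for instance a unipotent element of a prescribed Jordan type or a primitive prime divisor element of order $\ell_{2m}(q)$. The finitely many residual numerical exceptions flagged by Lemmas \ref{lem:numerical} and \ref{l:ppower}, together with any remaining very small values of $q$, can be verified directly in \textsf{GAP} \cite{GAP}, as elsewhere in the paper.
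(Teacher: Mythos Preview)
Your strategy mirrors the paper's, but with different element choices, and there is one concrete gap.

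For comparison: the paper also uses the Singer element of order $(q^m+1)/d$ for \emph{all} odd $m$ (not just $m=3$), citing \cite{Ber}; note that for $q$ even, Bereczky's list includes $O_n^-(q)$ as well as field extension subgroups, which you omit. For even $m\geqs 6$ the paper takes highly unbalanced decompositions---type $(n-4)\perp 4$ for $q$ odd (citing \cite[Proposition~5.10]{BGK}) and type $(n-2)\perp 2$ for $q$ even (citing \cite{GPPS})---rather than your near-balanced $(m-2)\perp(m+2)$. Your near-balanced elements are not the ones listed in \cite[Table~II]{GK} for symplectic groups, so the overgroup analysis would have to be redone directly from \cite{GPPS}; this is feasible but is extra work you do not acknowledge. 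Also, an element acting irreducibly on each summand of a non-degenerate decomposition fixes no totally singular subspace, so the parabolics ${\rm P}_{2k},{\rm P}_{2(m-k)}$ should not appear in your candidate list.

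The genuine gap is at $n=8$. The Singer element there has order $(q^4+1)/d$, and this is a prime for $q\in\{2,3,4,5,7,16,\dots\}$---and, as far as anyone knows, for infinitely many $q$. When $(q^4+1)/d$ is a prime $r$, Bereczky only tells you that \emph{either} $H$ lies in the short list \emph{or} the Singer element is a derangement of order $r$; in the second alternative you have no information about $H$ and no second derangement, so \eqref{e:star} is not contradicted. This is not a finite residual that \textsf{GAP} can absorb. The paper avoids the problem entirely by taking, for $m=4$, a type $6\perp 2$ element of order $q^3+1$, which is never a prime power once $q\geqs 3$ (Lemma~\ref{lem:primepower}), and then reading the overgroup list from \cite[Tables~8.48--8.49]{BHR}.
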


\begin{proof}
Set $d=(2,q-1)$ and write $n=2m$ with $m \geqs 3$. As before, we may assume that $G=G_0$.

\vs

\noindent \emph{Case 1.} $m$ odd. 

\vs

The case $(n,q) = (6,2)$ can be handled directly (using \textsf{GAP}, for example), so let us assume that $(n,q) \neq (6,2)$. Let $x \in G$ be an element of order $(q^{m}+1)/d$. If $q$ is even, then Lemma \ref{lem:primepower} implies that $|x|$ is not a prime power, and it is easy to see that the same conclusion also holds if $q$ is odd. By the main theorem of \cite{Ber}, $x$ is a derangement unless one of the following holds:
\begin{itemize}\addtolength{\itemsep}{0.2\baselineskip}
\item[{\rm (a)}] $H$ is a field extension subgroup of type ${\rm Sp}_{n/k}(q^k)$ for some prime divisor $k$ of $n$; 
\item[{\rm (b)}] $q$ is even and $H = O_{n}^{-}(q)$.
\end{itemize}
In (a), elements of order $\ell_{n-2}(q)$ are derangements, and so are unipotent elements with Jordan form $[J_2,J_1^{n-2}]$. Similarly, if (b) holds then elements of order $\ell_{m}(q)$ are derangements, and so are semisimple elements of type $(n-2) \perp 2$ and order 
$(q^{m-1}+1)(q+1)$.

\vs

\noindent \emph{Case 2.} $m \geqs 6$ even. 

\vs

First assume that $q$ is odd. Let $x \in G$ be a semisimple element of type $(n-4) \perp 4$, so  
$$|x| = \left\{\begin{array}{ll}
q^{m-2}+1 & \mbox{if $m \equiv 0 \imod{4}$} \\
(q^{m-2}+1)(q^{2}+1)/2 & \mbox{if $m \equiv 2 \imod{4}$.} 
\end{array}\right.$$
Clearly, if $m \equiv 2 \imod{4}$ then $|x|$ is divisible by $\ell_{4}(q)$ and $\ell_{n-4}(q)$, so $|x|$ is not a prime power. The same conclusion also holds if $m \equiv 0 \imod{4}$ (see Lemma \ref{lem:primepower}). By \cite[Proposition 5.10]{BGK}, we may assume that $H$ is of type ${\rm N}_{4}$ or ${\rm Sp}_{n/2}(q^2)$. In both cases, elements of order $\ell_{n-2}(q)$ are derangements. In addition, unipotent elements with Jordan form $[J_n]$ (respectively, $[J_2,J_1^{n-2}]$) are derangements if $H$ is of type ${\rm N}_{4}$ (respectively, ${\rm Sp}_{n/2}(q^2)$).  

Now assume that $q$ is even. Let $x \in G$ be a semisimple element of type $(n-2) \perp 2$ and order  $q^{m-1}+1$. Now Lemma \ref{lem:primepower} implies that $|x|$ is not a prime power, and by applying the main theorem of \cite{GPPS} we deduce that $x$ is a derangement unless $H \in \{{\rm N}_{2},O_{n}^{+}(q)\}$. In both cases, elements of order $\ell_{n}(q)$ are derangements. Also, unipotent elements with Jordan form $[J_n]$ are derangements if $H = {\rm N}_{2}$. Now, if $H=O_{n}^{+}(q)$ then let $y \in G$ be a block-diagonal element of the form $y=[y_1,y_2]$ (with respect to an orthogonal decomposition $n = (n-2) \perp 2$), where $y_1 \in {\rm Sp}_{n-2}(q)$ has order $\ell_{m-1}(q)$ and $y_2 \in {\rm Sp}_{2}(q)$ has order $q+1$. Then $y$ is a derangement and the result follows. 

\vs

\noindent \emph{Case 3.} $m = 4$.  

\vs

The case $q = 2$ can be checked directly, so we may assume that $q \geqs 3$. If $q$ is even, then we can repeat the relevant argument in Case 2. Now assume $q$ is odd. Let $x \in G$ be a semisimple element of type $6 \perp 2$ and order $q^3+1$.  By Lemma \ref{lem:primepower}, $|x|$ is not a prime power. The maximal subgroups of $G$ are listed in \cite[Tables 8.48 and 8.49]{BHR}, and we deduce that $x$ is a derangement unless $H$ is of type ${\rm N}_{2}$, ${\rm GU}_{4}(q)$ or ${\rm L}_{2}(q^3)$ (in the terminology of \cite{BHR,KL}, the latter possibility is an almost simple irreducibly embedded subgroup in the collection $\mathcal{S}$). In each of these exceptional cases, any element of order $(q^4+1)/2$ is a derangement. In addition, if $H={\rm N}_{2}$ then unipotent elements with Jordan form $[J_8]$ are derangements. Similarly, if $H$ is of type ${\rm GU}_{4}(q)$ or ${\rm L}_{2}(q^3)$ then elements with Jordan form $[J_2,J_1^6]$ are derangements.
\end{proof}

\begin{lemma}\label{lem:Sp4}
Theorem \ref{t:main2} holds if $G_0 = {\rm PSp}_n(q)$.
\end{lemma}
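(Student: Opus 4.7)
Given Lemma~\ref{lem:Sp}, it remains to handle $n=4$, so we may assume $G_0 = {\rm PSp}_4(q)$ with $q \geqs 4$ by the hypotheses recorded in Table~\ref{tab:gps}. As in earlier cases, \eqref{e:g0} lets us reduce to $G = G_0$, and the task is to exhibit, for each candidate maximal subgroup $H$ from \cite[Tables~8.12--8.14]{BHR}, a derangement whose order has at least two distinct prime divisors. I would first dispose of the small cases $q \in \{4,5,7,8,9\}$ directly in \textsf{GAP}~\cite{GAP}, and then assume $q \geqs 11$.

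The generic strategy uses two complementary semisimple elements. The first is a Singer-type element $x \in G$ of order $(q^2+1)/(2,q-1)$, arising from an anisotropic maximal torus; by the main theorem of~\cite{Ber}, the only maximal subgroups of $G$ containing $x$ are of type ${\rm Sp}_2(q^2)$ and, when $q$ is even, of type $O_4^-(q)$. The second is a semisimple element $y \in G$ of type $2 \perp 2$ acting with orders $q-1$ and $q+1$ on the two non-degenerate blocks, so that $|y| = (q^2-1)/(2,q-1)$, which is patently never a prime power for $q \geqs 4$. Against all the generic types of $H$ --- the parabolics ${\rm P}_1$ and ${\rm P}_2$, the ${\rm N}_2$-stabilizer ${\rm Sp}_2(q) \times {\rm Sp}_2(q)$, the imprimitive subgroup ${\rm Sp}_2(q)\wr \SSS_2$, the subfield subgroups ${\rm Sp}_4(q_0)$, the orthogonal subgroup $O_4^+(q)$ in characteristic $2$, and the low-rank $\mathcal{S}$-subgroups (such as ${\rm L}_2(q^2).2$ for $q=p$ odd, ${\rm Sz}(q)$ when $q$ is an odd power of $2$, and the sporadic $2.\Alt_6$ and $2^4.\Omega_4^-(2)$ subgroups) --- I would show that $x$ does the job whenever $(q^2+1)/(2,q-1)$ is not a prime power, using the lists in \cite[Tables~8.12--8.14]{BHR} to see that none of these contains a conjugate of $x$.

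To finish, one must eliminate the two residual types $H$ of ${\rm Sp}_2(q^2).2$ and (for $q$ even) $O_4^-(q) \cong {\rm L}_2(q^2).2$. For these, the element $y$ above is a derangement: $y$ does not preserve the $\F_{q^2}$-structure (its two eigenspaces have different $\F_q$-rationality), and it does not preserve a minus-type quadratic form (the restrictions to the two blocks are of opposite types). Since $|y|$ is non-prime-power for every $q \geqs 4$, this handles the generic case completely.

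The main obstacle will be the residual set of $q$ for which $(q^2+1)/(2,q-1)$ is itself a prime power; by Lemma~\ref{lem:primepower}, in even characteristic this is the Fermat list $q = 2^{2^j}$, and in odd characteristic this is the sporadic list of $q$ with $(q^2+1)/2$ a prime power (e.g.\ $q = 7, 9, 11, 19, 25, \dots$). For each such exceptional $q$, the Singer element $x$ is no longer useful, and after deploying the element $y$ of order $(q^2-1)/(2,q-1)$ one must substitute a unipotent derangement of Jordan type $[J_4]$ or $[J_2^2]$, or a product of $y$ with a small-order unipotent element, calibrated to each remaining subgroup $H$; verifying that this always yields an element of non-prime-power order lying outside every $G$-conjugate of $H$ --- particularly against the $\mathcal{S}$-class subgroups, where ad hoc checks are unavoidable --- constitutes the technical core of the argument.
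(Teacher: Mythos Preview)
Your plan has a concrete error in the treatment of $H=O_4^-(q)$ for $q$ even. You claim that your element $y$ of type $2\perp 2$, acting with order $q-1$ on one block and $q+1$ on the other, ``does not preserve a minus-type quadratic form (the restrictions to the two blocks are of opposite types)''. But the sign rule goes the other way: $O_2^+(q)\perp O_2^-(q)$ is a minus-type $4$-space, and $O_2^+(q)\times O_2^-(q)\leqs O_4^-(q)$ contains an element with exactly the eigenvalue data of $y$. So $y$ \emph{does} lie in a conjugate of $O_4^-(q)$ and is not a derangement there. (The paper instead uses, for this $H$, $a_2$-involutions with Jordan form $[J_2^2]$ together with elements $[y_1,y_2]$ where \emph{both} $y_i$ have order $q+1$ but lie in distinct ${\rm Sp}_2(q)$-classes.)

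More structurally, your whole argument is hostage to whether $(q^2+1)/(2,q-1)$ is a prime power, and you then defer the ``technical core'' for those (possibly infinitely many) exceptional $q$ to a vague ad hoc case analysis. The paper avoids this entirely by choosing, as its first test element, a \emph{mixed} element of order $p(q+1)$ when $q$ is odd (a block $[x_1,x_2]$ with $x_1$ unipotent of order $p$ and $x_2$ irreducible of order $q+1$), respectively $2(q+1)$ when $q$ is even. This order is automatically a non-prime-power for every $q\geqs 8$, so one never has to split on arithmetic properties of $q^2+1$; the list of maximal overgroups of such an element collapses to ${\rm P}_1$, ${\rm Sp}_2(q)\wr{\rm S}_2$, and (for $q$ even) $O_4^{\pm}(q)$, which are then dispatched by pairing an $\ell_4(q)$-derangement with a second derangement of coprime order (a $[J_4]$, a $[J_2^2]$, or a $[y_1,y_2]$ as above). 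I'd recommend abandoning the Singer-element route and adopting this mixed element instead.
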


\begin{proof}
We may assume that $n=4$. The result can be checked directly if $q \leqs 7$, so let us assume that $q \geqs 8$. 

First assume that $q$ is odd. In terms of an orthogonal decomposition $4=2\perp 2$, let $x =[x_1,x_2] \in G$ (modulo scalars) be an element of order $p(q+1)$, where $x_1 \in {\rm Sp}_{2}(q)$ is a unipotent element of order $p$, and $x_2 \in {\rm Sp}_{2}(q)$ is irreducible of order $q+1$. By inspecting the list of maximal subgroups of $G$ (see \cite[Tables 8.12 and 8.13]{BHR}), we deduce that $x$ is a derangement unless $H$ is of type ${\rm P}_{1}$ or ${\rm Sp}_{2}(q) \wr {\rm S}_2$. In both of these cases, any element of order $\ell_4(q)$ is a derangement. Similarly, unipotent elements with Jordan form $[J_4]$ are derangements if $H$ is of type ${\rm Sp}_{2}(q) \wr {\rm S}_2$. Finally, suppose that $H = {\rm P}_{1}$. Now ${\rm Sp}_{2}(q)$ has precisely $\varphi(q+1)/2 \geqs 2$ distinct classes of elements of order $q+1$ (where $\varphi$ is the Euler totient function); if $y_1, y_2 \in {\rm Sp}_{2}(q)$ represent distinct classes, then $y = [y_1,y_2] \in G$ (modulo scalars) is a derangement since it does not fix a totally isotropic $1$-space. 

Now assume $q$ is even. As above, let $x \in G$ be an element of order $2(q+1)$. The maximal subgroups of $G$ are listed in \cite[Table 8.14]{BHR}, and we see that $x$ is a derangement unless $H$ is of type ${\rm P}_{1}$, ${\rm Sp}_{2}(q) \wr S_2 \cong O_{4}^{+}(q)$ or $O_{4}^{-}(q)$. For $H = {\rm P}_{1}$, we can repeat the argument in the $q$ odd case, so let us assume that $H = O_{4}^{\e}(q)$. If $\e=+$ then any element of order $\ell_4(q)$ is a derangement, and we can also find derangements of order $4$ (with Jordan form $[J_4]$), since there are two conjugacy classes of such elements in $G$, but only one in $H$. Finally, if $\e=-$ then we can find derangements of order $2$ (with Jordan form $[J_2^2]$; these are $a_2$-type involutions in the sense of Aschbacher and Seitz \cite{AS}), and also derangements of order $q+1$ of the form $[y_1,y_2]$ as above.
\end{proof}

\vs

To complete the proof of Theorem \ref{t:main2}, we may assume that $G_0 = {\rm P\Omega}_{n}^{\e}(q)$ is an orthogonal group, where $n \geqs 7$. The low-dimensional groups with $n \in \{7,8\}$ require special attention. We extend our earlier notation for orthogonal decompositions by writing $m^{\pm}$ to denote a non-degenerate $m$-space of type $\pm$ (when $m$ is even). Similarly, we write ${\rm N}_{m}^{\pm}$ for the $G_0$-stabilizer of such a subspace of the natural $G_0$-module $V$. If $q$ is even, we will also adopt the standard Aschbacher-Seitz notation for involutions (see \cite{AS}).

\begin{lemma}\label{lem:O7}
Theorem \ref{t:main2} holds if $G_0 = \Omega_7(q)$.
\end{lemma}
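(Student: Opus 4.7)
The plan is to mimic the strategy used for the symplectic case, adapted to the orthogonal setting in dimension seven, and to reduce to $G=G_0$ with $q$ odd. For $q$ even we have the exceptional isomorphism $\Omega_7(q)\cong {\rm Sp}_6(q)$, so property \eqref{e:star} has already been analysed in Lemma~\ref{lem:Sp}. The small case $q=3$ can be verified directly in \textsf{GAP} \cite{GAP}. Thus I would assume throughout that $q$ is an odd prime power with $q\geqs 5$, and that $G=G_0$ (using the usual reduction in \eqref{e:g0}).

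The core step is the choice of a well-behaved derangement of non-prime-power order. I would take $x\in G$ to be a semisimple element of type $6^{-}\perp 1$, so
$$|x|=(q^3+1)/(2,q-1)=\tfrac{1}{2}(q+1)(q^2-q+1).$$
Since $(q+1,q^2-q+1)$ divides $3$, a quick check (or an application of Lemma~\ref{lem:primepower}) shows that $|x|$ is never a prime power for $q\geqs 5$: the only obstruction would be $q+1=2\cdot 3^a$ with $q^2-q+1$ also a power of $3$, which one verifies does not occur. By a primitive prime divisor argument—using $\ell_6(q)\mid |x|$ together with the main theorem of \cite{GPPS} and \cite[Table~II]{GK}—the element $x$ is a derangement unless $H$ lies in a very short list of subgroups, namely $H={\rm N}_1$ (of type $\mathrm{O}_6^{-}(q)\times\mathrm{O}_1(q)$), a field extension subgroup of type ${\rm O}_1(q^7)$ or ${\rm GU}_3(q)$, or an almost simple subgroup with socle ${\rm G}_2(q)$.

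I would then dispatch each exceptional case by exhibiting a second derangement whose order has a prime divisor disjoint from $\ell_6(q)$, thereby contradicting \eqref{e:star}. For $H={\rm N}_1$ one may take a semisimple element of type $4^{-}\perp 2^{-}\perp 1$ of order divisible by $\ell_4(q)$ together with $q+1$; this cannot lie in the stabilizer of any non-degenerate $1$-space of the same underlying geometry, and unipotent elements with Jordan form $[J_7]$ on $V$ give a further derangement. For a field extension subgroup, elements with Jordan form $[J_2,J_1^5]$ or of order $\ell_{4}(q)$ act as derangements. Finally, for $H$ almost simple with socle ${\rm G}_2(q)$, it is well known (see \cite[Table~A.7]{KS}) that ${\rm G}_2(q)$ contains no element of order $(q^4-1)/(2,q-1)$, so a semisimple element of type $4^{+}\perp 2^{+}\perp 1$ is a derangement, and its order is coprime to $\ell_6(q)$.

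The main obstacle I expect is the bookkeeping for the $\mathcal{S}$-type subgroup with socle ${\rm G}_2(q)$: unlike stabilizers of geometric substructures, its element orders are not immediately read off from an orthogonal decomposition, so ruling it out requires a separate order comparison (this is where \cite[Table~A.7]{KS} becomes essential). Once that case is handled, the remaining cases fall into the general template used in Lemmas~\ref{lem:Sp} and~\ref{lem:Ln}, and the conclusion that \eqref{e:star} fails for every primitive action of $\Omega_7(q)$ follows.
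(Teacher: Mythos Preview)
Your overall strategy matches the paper's: reduce to $G=G_0$, handle $q=3$ by machine, and for $q\geqs 5$ take the element $x$ of type $6^{-}\perp 1$ with $|x|=(q^3+1)/2$, which is not a prime power. The paper, however, invokes \cite[Proposition~5.20]{BGK} to conclude in one stroke that the only maximal overgroup of $x$ is $H={\rm N}_6^{-}$, and then finishes that case with derangements of order $\ell_3(q)$ and unipotents of shape $[J_7]$. Your route via \cite{GPPS} and \cite[Table~II]{GK} only tests for divisibility by $\ell_6(q)$, so it produces a longer list and you then have to eliminate the extras by hand.

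That longer list is where the problems lie. First, the subgroups you call ``$O_1(q^7)$'' and ``${\rm GU}_3(q)$'' simply do not occur as maximal subgroups of $\Omega_7(q)$: there are no $\mathcal{C}_3$ field-extension subgroups in odd prime dimension, and the unitary-type embedding ${\rm GU}_{n/2}(q)$ requires $n$ even. Second, and more seriously, your elimination of $G_2(q)$ is broken. You propose a semisimple element of ``type $4^{+}\perp 2^{+}\perp 1$'' of order $(q^4-1)/2$, but no such element exists in $\Omega_7(q)$: one cannot act irreducibly on a plus-type $4$-space, and no maximal torus of $\Omega_7(q)$ has order divisible by $(q^4-1)/2$ (the torus orders are products $(q^{a}\pm 1)$ over partitions of $3$). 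The correct observation---implicit in \cite[Proposition~5.20]{BGK}---is that $G_2(q)$ already fails to contain $x$ itself, since its maximal tori have orders $(q\pm 1)^2$, $q^2-1$, $q^2\pm q+1$, none of which accommodates an element of order $(q+1)(q^2-q+1)/2$. So $G_2(q)$ never needed a separate argument; citing \cite{BGK} removes it from the list outright.

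Your treatment of $H={\rm N}_6^{-}$ is fine (the $4^{-}\perp 2^{-}\perp 1$ element fixes only a $1$-space with $6^{+}$ complement, so it is a derangement), though the paper's choice of an $\ell_3(q)$-element is cleaner and avoids the discriminant bookkeeping you allude to with ``the same underlying geometry''.
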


\begin{proof}
We may assume that $G=G_0$. The case $q=3$ can be checked directly, so we may assume that $q \geqs 5$ (recall that $q$ is odd). Let $x \in G$ be an element of order $(q^3+1)/2$, which is not a prime power. By \cite[Proposition 5.20]{BGK}, $x$ is a derangement unless $H = {\rm N}_{6}^{-}$, in which case any element of order $\ell_3(q)$ is a derangement, and so are unipotent elements with Jordan form $[J_7]$.
\end{proof}

\begin{lemma}\label{lem:O8+}
Theorem \ref{t:main2} holds if $G_0 = {\rm P\Omega}_8^{+}(q)$.
\end{lemma}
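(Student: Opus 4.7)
The plan is to follow the template of the preceding classical-group lemmas: reduce to $G = G_0$, handle small $q$ computationally, and for the generic case exhibit derangements whose orders involve incompatible primes. By the inclusion $\Delta_{H_0}(G_0) \subseteq \Delta_H(G)$ recorded in \eqref{e:g0}, we may assume throughout that $G = G_0 = {\rm P\Omega}_8^+(q)$. The cases $q \in \{2,3\}$ can be verified directly in \textsf{GAP} \cite{GAP} by computing the permutation character of $G_0$ on each maximal subgroup.

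For the generic case, I would pick a semisimple element $x \in G$ corresponding to an orthogonal decomposition $8 = 6^\epsilon \perp 2^\epsilon$ of $+$-type, with $|x|$ essentially of the form $(q^3-\epsilon)(q-\epsilon)/d$ (or a related product arising from $4^-\perp 4^-$ or $4^+\perp 4^-$ when the first choice degenerates) where $d$ is a small divisor coming from the centre. Lemma \ref{lem:numerical} (together with Lemma \ref{l:ppower} in any borderline situation) guarantees that such an order has at least two distinct prime divisors, and in particular is not a prime power.

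Next, using the list of maximal subgroups in \cite[Tables 8.50 and 8.51]{BHR} in conjunction with the main theorem of \cite{GPPS} (and \cite[Table II]{GK}), I would identify the short list of maximal subgroups $H$ that can contain a conjugate of $x$. Typically these reduce to certain parabolic subgroups ${\rm P}_i$, subspace stabilizers ${\rm N}_m^\pm$, field extension subgroups of type ${\rm O}_4^\pm(q^2)$, subfield subgroups ${\rm P\Omega}_8^+(q_0)$, and the $\mathcal{S}$-family subgroup $\Omega_7(q)$ arising from triality. For each such remaining $H$, I would exhibit a second derangement whose order is an $s$-power with $s \nmid |x|$: for example, elements of order $\ell_{6}(q)$, $\ell_4(q)$ or $\ell_{8}(q)$ (when such ppd's exist in $G_0$), unipotent elements of Jordan type $[J_8]$ or $[J_2, J_1^6]$, or semisimple elements of a different decomposition type. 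Producing two derangements whose orders are powers of distinct primes directly contradicts \eqref{e:star}.

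The main obstacle will be the triality automorphism of $G_0$: it creates additional conjugacy classes of maximal subgroups not present in $\Omega_n^+(q)$ for $n > 8$, in particular trialitarian images of the parabolic subgroups ${\rm P}_1, {\rm P}_3, {\rm P}_4$ and of the subspace stabilizers, together with the $\mathcal{S}$-family subgroup $\Omega_7(q)$. Because triality does not preserve the natural notions of \emph{subspace of the natural module} or \emph{Jordan type}, the candidate derangements above must be re-examined for each triality-orbit of maximal subgroups, and for the $\Omega_7(q)$ subgroup one needs a specific argument (for instance, using unipotent elements with Jordan type $[J_8]$, which do not occur in $\Omega_7(q)$, or semisimple elements of order $(q^4-1)/d$ absent from $|\Omega_7(q)|$). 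Modulo this case analysis, the argument is essentially a dimension-$8$ elaboration of Lemmas~\ref{lem:O7} and \ref{lem:Sp}.
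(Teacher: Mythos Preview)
Your overall strategy matches the paper's: reduce to $G=G_0$, handle small $q$ directly, and for generic $q$ exhibit a semisimple element of non-prime-power order whose maximal overgroups are few, then finish each surviving case with a second derangement. The paper, however, executes the generic step much more cleanly than your outline suggests. For $q\geqslant 4$ it takes the element $x$ of order $(q^3+1)/(2,q-1)$ fixing a decomposition $6^{-}\perp 2^{-}$; the point is that \cite[p.~767]{GK} already records that the \emph{only} maximal overgroups of such an $x$ in $G_0$ are ${\rm N}_2^{-}$ and a subgroup of type ${\rm GU}_4(q)$. Thus all the parabolic, subfield, field-extension, and triality-related $\Omega_7(q)$ cases that you flag as the ``main obstacle'' are eliminated in one stroke by this citation, and the residual case analysis is two lines (use $\ell_3(q)$, together with unipotents of shape $[J_7,J_1]$ for $q$ odd and $[J_4^2]$ or $c_2$-involutions for $q$ even). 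The case $q=3$ is done separately by hand via a $4^{-}\perp 4^{-}$ element of order $20$ and \cite[Table~3]{BGK}, though your GAP-only treatment of $q\in\{2,3\}$ would also be acceptable.

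A couple of your concrete suggestions would not work as stated. The prime $\ell_8(q)$ does not divide $|{\rm P\Omega}_8^{+}(q)|$ (there is no $q^4+1$ factor in the order), so it cannot serve as a derangement order here. Likewise, there is no unipotent class of shape $[J_8]$ in $\Omega_8^{+}(q)$ for odd $q$ (an even Jordan block must occur with even multiplicity), so that candidate fails for the $\Omega_7(q)$ case; the paper instead uses $[J_7,J_1]$ for odd $q$ and $[J_4^2]$ for even $q$. Finally, elements of order $(q^4-1)/d$ are not obviously absent from $\Omega_7(q)$, since $q^4-1$ divides $|\Omega_7(q)|$. None of these errors is fatal to the strategy, but they show that the triality case-by-case route you propose would require more care than your sketch provides; the cleaner path is to borrow the overgroup computation from \cite{GK} directly.
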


\begin{proof}
As usual, we may assume that $G=G_0$. Let $V$ be the natural module for $G_0$. The case $q=2$ can be checked directly, using \textsf{GAP}. Next suppose that $q=3$. Let $x \in G$ be an element of order $20$, fixing a decomposition of $V$ of the form $8 = 4^{-}\perp 4^{-}$. As indicated in \cite[Table 3]{BGK}, $x$ is a derangement unless the type of $H$ is one of the following:
$${\rm P}_{4}, O_7(3), O_{4}^{-}(3) \wr {\rm S}_2, {\rm GU}_{4}(3), {\rm Sp}_{4}(3) \otimes {\rm Sp}_{2}(3)$$
where $O_7(3)$ is irreducible and ${\rm P}_{4}$ is the stabilizer in $G$ of a maximal totally singular subspace of $V$.

By considering elements of order $14$, we can immediately eliminate the cases ${\rm P}_{4}$, $O_{4}^{-}(3) \wr S_2$ and ${\rm Sp}_{4}(3) \otimes {\rm Sp}_{2}(3)$. Similarly, $G$ contains derangements of order $15$ if $H$ is of type ${\rm GU}_{4}(3)$. Finally, suppose that $H$ is an irreducible subgroup of type $O_7(3)$. To see that \eqref{e:star} does not hold, we may replace $H$ by a conjugate $H^{\tau}$, where $\tau \in {\rm Aut}(G)$ is an appropriate triality graph automorphism such that $H^{\tau}$ is the stabilizer in $G$ of a non-degenerate $1$-space. For this reducible subgroup, elements with Jordan form $[J_2^4]$ are derangements, and so are elements $y \in G$ of order $5$ of the form $y = \hat{y}Z$, where $Z = Z(\Omega_{8}^{+}(3))$ and $C_V(\hat{y})$ is trivial (the eigenvalues of $\hat{y}$ (in $\mathbb{F}_{3^4}$) are the nontrivial fifth roots of unity, each occurring with multiplicity $2$). 

For the remainder, we may assume that $q \geqs 4$. Let $x \in G$ be an element of order 
$(q^3+1)/(2,q-1)$, fixing an orthogonal decomposition $8 = 6^{-} \perp 2^{-}$. Then $|x|$ is not a prime power, and $x$ is a derangement unless $H$ is of type ${\rm N}_{2}^{-}$ or ${\rm GU}_{4}(q)$ (see \cite[p.767]{GK}). In both of these cases, elements of order $\ell_3(q)$ are derangements. Similarly, if $q$ is odd then unipotent elements with Jordan form $[J_7,J_1]$ are also derangements. Finally, if $q$ is even and $H$ is of type ${\rm N}_{2}^{-}$ (respectively, 
${\rm GU}_{4}(q)$) then unipotent elements with Jordan form $[J_4^2]$ (respectively, $[J_2^2,J_1^4]$; $c_2$-involutions in the terminology of \cite{AS}) are derangements. The result follows.
\end{proof}

\begin{lemma}\label{lem:O8-}
Theorem \ref{t:main2} holds if $G_0 = {\rm P\Omega}_8^{-}(q)$.
\end{lemma}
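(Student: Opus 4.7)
The plan is to follow the pattern of the proofs of Lemmas \ref{lem:O7} and \ref{lem:O8+}. First one reduces to $G = G_0$ via \eqref{e:g0} (any proper overgroup only enlarges $\Delta(G)$). The small case $q = 2$ is handled by a direct \textsf{GAP} computation using the library of maximal subgroups of $G$.

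For $q \geqs 3$, let $V$ denote the natural $G_0$-module and fix an orthogonal decomposition $V = V_1 \perp V_2$ with $V_1$ of type $6^-$ and $V_2$ of type $2^+$; this indeed lives in $V$ of type $-$, since $(-)\cdot(+)=-$. Take $x=[x_1,1_{V_2}]\in G$, where $x_1$ is a Singer-type element of $\Omega_6^-(q)$ of order $(q^3+1)/(2,q-1)$ acting irreducibly on $V_1$. As in the proof of Lemma \ref{lem:O8+}, Lemma \ref{lem:primepower} shows that $|x|$ is not a prime power for $q \geqs 3$ (the value $q=2$ was excluded precisely because $q^3+1=9$ is a prime power). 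By \cite[Proposition 5.20]{BGK} together with the tables of \cite[p.767]{GK}, the only maximal subgroups of $G$ that contain a conjugate of $x$ are those of type ${\rm N}_2^+$, ${\rm N}_6^-$, or the field-extension subgroup of type ${\rm GU}_4(q)$. In all other cases $x$ is already a derangement of non-prime-power order and \eqref{e:star} fails immediately.

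For each remaining $H$ one exhibits two derangements of coprime prime-power orders. An element of order $\ell_8(q)$ (a primitive prime divisor of $q^8-1$, which exists for every $q \geqs 2$ by Zsigmondy) acts irreducibly on $V$ and therefore lies in no reducible subgroup and in no field-extension subgroup of degree less than eight over $\F_q$; hence it is a derangement of prime order $\ell_8(q)$ for each such $H$. Complementing this, one produces a unipotent derangement of order a power of $p$ by choosing a Jordan form not realised in $H$: for instance $[J_7,J_1]$ when $q$ is odd and $H$ is of type ${\rm N}_2^+$ or ${\rm N}_6^-$, $[J_2^2,J_1^4]$ (a $c_2$-involution in the sense of \cite{AS}) when $q$ is even, and an appropriate Jordan type absent from the image of ${\rm GU}_4(q) \hookrightarrow \Omega_8^-(q)$ in the last case. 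Since $\ell_8(q)$ is coprime to $p$, these two derangements witness the failure of \eqref{e:star}.

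The main obstacle is the field-extension subgroup ${\rm GU}_4(q)$: one must identify which orthogonal unipotent Jordan forms arise through the embedding ${\rm GU}_4(q) \hookrightarrow \Omega_8^-(q)$ and then choose a Jordan type missing from this image. A secondary concern is a handful of small values of $q$ where $(q^3+1)/(2,q-1)$ or $\ell_8(q)$ behaves atypically; if $q = 3$ turns out to require separate analysis it is dispatched by the same \textsf{GAP}-based direct check used for $q=2$.
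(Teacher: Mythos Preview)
There are genuine gaps in the proposal, and they stem from an incorrect description of the maximal overgroups of your element $x$.

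First, a misidentification: ${\rm GU}_4(q)$ embeds in $\Omega_8^{+}(q)$, not in $\Omega_8^{-}(q)$ (the sign is $(-1)^n$ with $n=4$). So ${\rm GU}_4(q)$ is not a maximal subgroup of $G_0={\rm P}\Omega_8^{-}(q)$ at all. The field-extension subgroup that actually occurs here is of type $O_4^{-}(q^2)$, and this is what needs to be analysed. Your proposed approach to ``identify which orthogonal unipotent Jordan forms arise through the embedding ${\rm GU}_4(q)\hookrightarrow\Omega_8^{-}(q)$'' is therefore aimed at a subgroup that does not exist in $G_0$.

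Second, and more seriously, your overgroup list for $x=[x_1,1_{V_2}]$ is incomplete. Because $x$ acts \emph{trivially} on the $2^{+}$-space $V_2$, it fixes every $1$-space in $V_2$. A $2^{+}$-space contains both singular and nonsingular vectors, so $x$ lies in the stabiliser of a singular $1$-space (type ${\rm P}_1$) and in the stabiliser of a nonsingular $1$-space (type $O_7(q)$ for $q$ odd, ${\rm Sp}_6(q)$ for $q$ even). These are exactly the residual cases the paper has to treat by hand, and your argument as written never touches them. The reference to \cite[Proposition~5.20]{BGK} does not help: that result concerns $\Omega_7(q)$, not $\Omega_8^{-}(q)$.

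By contrast, the paper avoids this pile-up of reducible overgroups by not relying on a single ``universal'' element. It works directly from the list of maximal subgroups of $G_0$ in \cite[Tables~8.52--8.53]{BHR}, eliminating most of them with pairs of primitive prime divisors ($\ell_8(q)$ together with $\ell_6(q)$, $\ell_4(q)$ or $\ell_3(q)$, as appropriate), and then handles the surviving cases ${\rm P}_1$, ${\rm N}_2^{+}$, $O_7(q)$/${\rm Sp}_6(q)$ and $O_4^{-}(q^2)$ individually. It also treats $q=3$ by \textsf{GAP}, not just $q=2$. If you want to rescue the single-element strategy you would need a different $x$ whose maximal overgroups really are under control; otherwise, follow the case split in the paper.
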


\begin{proof}
Again, we may assume that $G=G_0$. If $q \leqs 3$ then we can use \textsf{GAP} to verify the result, so let us assume that $q \geqs 4$. The maximal subgroups of $G$ are listed in \cite[Tables 8.52 and 8.53]{BHR}. By considering elements of order $\ell_{8}(q)$ and $\ell_{6}(q)$, we can eliminate subfield subgroups, together with the reducible subgroups of type ${\rm P}_{2}$, ${\rm P}_{3}$, ${\rm N}_{2}^{-}$, ${\rm N}_{3}$ and ${\rm N}_{4}^{+}$. Similarly, elements of order $\ell_{8}(q)$ and $\ell_{4}(q)$ are derangements if $H$ is a non-geometric subgroup of type ${\rm L}_{3}^{\e}(q)$. Therefore, to complete the proof, we may assume that $H$ is either a field extension subgroup of type $O_{4}^{-}(q^2)$, or a reducible subgroup of type ${\rm P}_{1}$, ${\rm N}_{2}^{+}$, $O_7(q)$ ($q$ odd) or ${\rm Sp}_{6}(q)$ ($q$ even).

If $H$ is of type $O_{4}^{-}(q^2)$, then elements of order $\ell_{6}(q)$ are derangements, as well as unipotent elements with Jordan form $[J_{7},J_{1}]$ if $q$ is odd, and unipotent elements with Jordan form $[J_2^2,J_1^4]$ ($a_2$-type involutions) if $q$ is even. Similarly, if $H = {\rm P}_{1}$ or ${\rm N}_{2}^{+}$ then elements of order $\ell_{8}(q)$ and $\ell_{3}(q)$ are derangements (note that an element of order $\ell_3(q)$ fixes a $2^{-}$-space, but not a $2^{+}$-space). Finally, suppose $H$ is of type $O_7(q)$ ($q$ odd) or ${\rm Sp}_{6}(q)$ ($q$ even). In both cases, elements of order $\ell_8(q)$ are derangements. In addition, there are derangements with Jordan form $[J_5,J_3]$ ($q$ odd) and $[J_4^2]$ ($q$ even). 
\end{proof}

\begin{lemma}\label{lem:O}
Theorem \ref{t:main2} holds if $G_0 = {\rm P\Omega}_n^{\e}(q)$.
\end{lemma}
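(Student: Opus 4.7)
The plan is to follow the template established in Lemmas \ref{lem:Sp}--\ref{lem:O8-}. By \eqref{e:g0} we may assume $G = G_0$, and in view of Lemmas \ref{lem:O7}, \ref{lem:O8+} and \ref{lem:O8-} we may assume $n \geqs 9$. I would split the argument into three cases according to $(n,\e)$: (i) $n = 2m+1$ with $m \geqs 4$ (so $q$ is odd); (ii) $(n,\e) = (2m,+)$ with $m \geqs 5$; and (iii) $(n,\e) = (2m,-)$ with $m \geqs 5$.

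In each case, the first step is to exhibit a semisimple element $x \in G$ of non-prime-power order by choosing an orthogonal decomposition of the natural module $V$ of the form $n = a \perp b$ (with appropriate types on each summand) and taking $x$ to act irreducibly on each summand. For example, in case (ii) I would take $x$ of type $(2m-2)^{-}\perp 2^{-}$ with order roughly $(q^{m-1}+1)(q+1)/(2,q-1)$; in case (iii) I would use an element of type $(2m-2)^{+}\perp 2^{-}$ or combine a Singer-type element of order $(q^m+1)/(2,q-1)$ with a smaller torus element; in case (i) I would take a decomposition $(2m+1) = (2m-2)^{\pm}\perp 3$ with $x$ of order $(q^{m-1}\pm 1)(q\mp 1)/2$. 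That $|x|$ is not a prime power follows from Lemmas \ref{lem:primepower}, \ref{lem:numerical} and \ref{l:ppower}, modulo a short list of small-parameter exceptions to be handled directly (via \textsf{GAP} \cite{GAP}) exactly as in the preceding lemmas.

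The second step is to apply the Guralnick--Penttila--Praeger--Saxl classification \cite{GPPS} of maximal subgroups of classical groups containing a primitive prime divisor element of large order, together with the refinements recorded in \cite[Table II]{GK} and \cite[Section 5]{BGK}, to deduce that any maximal subgroup $H \leqs G$ containing a conjugate of $x$ lies in a short explicit list. Typically this list consists of a parabolic subgroup ${\rm P}_i$ for small $i$, a reducible stabilizer ${\rm N}_i^{\delta}$ for small $i$, a field extension subgroup of type $O_{n/k}^{\delta}(q^k)$, and (when $n = 2m$) the stabilizer of type ${\rm GU}_{m}(q)$. For each surviving $H$, I would then exhibit at least one further derangement of a distinct prime order--suitable candidates being elements of order $\ell_{n-1}(q)$, $\ell_{2(m-1)}(q)$ or $\ell_n(q)$ to eliminate field-extension subgroups and unitary-type subgroups, together with unipotent elements of prescribed Jordan type such as $[J_n]$, $[J_{n-1},J_1]$, $[J_2,J_1^{n-2}]$, or (in characteristic $2$) $a_2$ and $c_2$ involutions in the Aschbacher--Seitz notation \cite{AS}--forcing the existence of derangements of two coprime orders and thereby contradicting property \eqref{e:star}.

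The main obstacle will be the bookkeeping in the small-field exceptions where primitive prime divisors fail to exist (notably $(q,e) = (2,6)$ and the Fermat/Mersenne-prime corner cases governed by Lemma \ref{lem:primepower}), together with the finite list of class $\mathcal{S}$ subgroups appearing in the maximal subgroup tables; these are finite in number for each fixed $n$ and can be dispatched by inspecting the explicit lists of maximal subgroups in \cite[Tables 8.50--8.59]{BHR} and comparing unipotent and semisimple conjugacy classes in $H$ and $G$, either by direct computation or in \textsf{GAP} \cite{GAP}, exactly as was done for $\Omega_7(q)$ and ${\rm P\Omega}_8^{\pm}(q)$ in Lemmas \ref{lem:O7}--\ref{lem:O8-}.
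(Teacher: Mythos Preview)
Your overall template matches the paper's exactly---reduce to $G=G_0$, pick a semisimple element $x$ of non-prime-power order, invoke \cite{GPPS}/\cite{GK}/\cite{BGK} to pin down the maximal overgroups of $x$, then kill each surviving $H$ with a second derangement of coprime order (typically a ppd element together with a unipotent of prescribed Jordan type). Where you diverge is in the \emph{choice} of $x$, and here the paper's choices are sharper. In the $+$-type case the paper does not use your ``unbalanced'' element of type $(2m-2)^{-}\perp 2^{-}$; instead it takes the ``balanced'' element of type $(m+1)^{-}\perp(m-1)^{-}$ (for $m$ odd) or $(m+2)^{-}\perp(m-2)^{-}$ (for $m$ even), precisely because \cite[Propositions~5.13,~5.14]{BGK} already show that such an element lies in at most one or two maximal subgroups (${\rm N}_{m\pm1}^{-}$, and $O_{n/2}^{+}(q^2)$ when $m$ is even). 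Your element of type $(2m-2)^{-}\perp 2^{-}$ sits in ${\rm N}_2^{-}$ and potentially several further geometric subgroups, so the bookkeeping is longer. Similarly, in odd dimension the paper uses $(m+1)^{-}\perp(m-1)^{-}\perp 1$ or $(m+2)^{-}\perp(m-2)^{-}\perp 1$ rather than your $(2m-2)^{\pm}\perp 3$.

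One genuine slip: in case~(iii) your proposed element ``of type $(2m-2)^{+}\perp 2^{-}$'' does not exist as described, since an even-dimensional orthogonal space of $+$-type admits no element acting irreducibly on it. Your fallback to the Singer element of order $(q^m+1)/(2,q-1)$ is exactly what the paper does for $m\in\{5,6,7,9,10\}$, but note that this order \emph{can} be a prime power (e.g.\ $q=2$, $m=8$ gives $257$), so the paper treats $m=8$ separately with a three-block element of type $10^{-}\perp 4^{-}\perp 2^{-}$, and for $m\geqslant 11$ it switches to a three-block element $(n-10)^{-}\perp 6^{-}\perp 4^{-}$ governed by \cite[Proposition~5.16]{BGK}. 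So your sketch is on the right track, but the paper's balanced decompositions (lifted directly from \cite{BGK}) buy a much shorter list of surviving $H$ to eliminate, and the $-$-type case genuinely needs subcases on $m$ that your outline does not anticipate.
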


\begin{proof}
We may assume that $G=G_0$ and $n \geqs 9$. We have three cases to consider.

\vs

\noindent \emph{Case 1.} $G_0 = {\rm P\Omega}_{n}^{+}(q)$ and $n \geqs 10$.

\vs

Write $n=2m$ and first assume that $m$ is odd. Let $x \in G$ be an element of order $(q^{(m-1)/2}+1)(q^{(m+1)/2}+1)/(4,q-1)$, fixing an orthogonal decomposition of the form $(m+1)^{-} \perp (m-1)^{-}$. Then Lemma \ref{lem:numerical} implies that $|x|$ is not a prime power, so by \cite[Proposition 5.13]{BGK} we may assume that $H = {\rm N}_{m-1}^{-}$. In this situation, elements of order $\ell_{n-2}(q)$ are derangements, and so are unipotent elements with Jordan form $[J_{n-1},J_{1}]$ ($q$ odd) or $[J_{n-2},J_2]$ ($q$ even).

A similar argument applies if $m$ is even. Here we take an element $x \in G$ of order 
$(q^{(m-2)/2}+1)(q^{(m+2)/2}+1)/(4,q-1)$, fixing a decomposition $(m+2)^{-} \perp (m-2)^{-}$.
Then $|x|$ is not a prime power, and \cite[Proposition 5.14]{BGK} implies that $x$ is a derangement unless $H$ is of type ${\rm N}_{m-2}^{-}$ or $O_{n/2}^{+}(q^2)$. In the former case, we complete the argument as above, so let us assume that $H$ is of type $O_{n/2}^{+}(q^2)$. Any element of order $\ell_{n-2}(q)$ is a derangement, and so are unipotent elements with Jordan form $[J_{n-1},J_{1}]$ if $q$ is odd. Finally, if $q$ is even then $a_2$-type involutions are derangements.

\vs

\noindent \emph{Case 2.} $G_0 = {\rm P\Omega}_{n}^{-}(q)$ and $n \geqs 10$.

\vs

Again, write $n=2m$. First assume that $m \geqs 11$. Let $x \in G$ be an element of order 
$${\rm lcm}(q^{m-5}+1,q^3+1,q^2+1)/(2,q-1)$$
fixing a decomposition $(n-10)^{-} \perp 6^{-} \perp 4^{-}$. Then $|x|$ is not a prime power,  and \cite[Proposition 5.16]{BGK} implies that $x$ is a derangement unless $H$ is of type ${\rm N}_{4}^{-}$, ${\rm N}_{6}^{-}$ or ${\rm N}_{10}^{+}$. In each of these cases, it is clear that elements of order $\ell_{n}(q)$ and $\ell_{n-2}(q)$ are derangements.

Next suppose that $m \in \{5,6,7,9,10\}$. Let $x \in G$ be an irreducible element of order 
$(q^m+1)/(2,q-1)$. We claim that $|x|$ is not a prime power (here we require $m \neq 8$). If $q$ is even, this follows immediately from Lemma \ref{lem:primepower}, so let us assume that $q$ is odd. Suppose $m=5$ and $q^5+1 = 2r^e$ for some prime $r$ and positive integer $e$. Then $(q+1)(q^4-q^3+q^2-q+1) = 2r^e$ and $r = \ell_{10}(q)$. Therefore, $q+1=2$ is the only possibility, which is absurd. Similarly, if $m=6$ and $q^6+1 = (q^2+1)(q^4-q^2+1) = 2r^e$, then $r=\ell_{12}(q)$ and $q^2+1=2$, which is not possible. The other cases are entirely similar. Now, by the main theorem of \cite{Ber}, $x$ is a derangement unless $H$ is a field extension subgroup of type $O_{n/k}^{-}(q^k)$ ($k$ a prime divisor of $n$, $n/k \geqs 4$ even) or ${\rm GU}_{n/2}(q)$ ($n/2$ odd). In both cases, elements of order $\ell_{n-2}(q)$ are derangements. In addition, there are unipotent derangements; take $[J_{n-1},J_1]$ if $q$ is odd, an $a_2$-involution if $q$ is even and $H$ is of type $O_{n/k}^{-}(q^k)$, and a $c_2$-involution if $q$ is even and $H$ is of type ${\rm GU}_{n/2}(q)$.

Finally, let us assume that $m=8$.  As in \cite[Table II]{GK}, let $x \in G$ be an element of order ${\rm lcm}(q^5+1,q^2+1,q+1)/(2,q-1)$, fixing an orthogonal decomposition of the form $10^{-} \perp 4^{-} \perp 2^{-}$. Note that $|x|$ is divisible by $\ell_{10}(q)$ and $\ell_4(q)$, so it is not a prime power. As indicated in \cite[Table II]{GK}, $x$ is a derangement unless $H$ is of type ${\rm N}_{2}^{-}$, ${\rm N}_{4}^{-}$ or ${\rm N}_{6}^{+}$. In each of these cases, elements of order $\ell_{16}(q)$ and $\ell_{14}(q)$ are derangements.

\vs

\noindent \emph{Case 3.} $G_0 = \Omega_{n}(q)$ and $n \geqs 9$ is odd.

\vs

Write $n=2m+1$ and note that $q$ is odd. First assume $m$ is odd. Let $x \in G$ be an element of order 
$${\rm lcm}(q^{(m+1)/2}+1, q^{(m-1)/2}+1)/2 = (q^{(m+1)/2}+1)(q^{(m-1)/2}+1)/4,$$ 
fixing an orthogonal decomposition $(m+1)^{-} \perp (m-1)^{-} \perp 1$. Note that $|x|$ is divisible by $\ell_{m+1}(q)$ and $\ell_{m-1}(q)$, so $|x|$ is not a prime power. Let $H$ be a
maximal subgroup of $G$ containing $x$. By carefully applying the main theorem of \cite{GPPS}, we deduce that $H \in \{{\rm N}_{m+1}^{-}, {\rm N}_{m-1}^{-}, {\rm N}_{2m}^{+}\}$. For example, the order of $x$ rules out subfield subgroups and imprimitive subgroups of type $O_1(q) \wr \SSS_{n}$ (see \cite[Remark 5.1(i)]{BGK}), and the dimensions of the irreducible constituents of $x$ are incompatible with field extension subgroups of type $O_{n/k}(q^k)$. Now, if $H$ is one of these reducible subgroups, then elements of order $\ell_{n-1}(q)$ and $\ell_{n-3}(q)$ are derangements. The result follows.

A similar argument applies if $m$ is even. Here we take $x \in G$ to be an element of order 
$${\rm lcm}(q^{(m+2)/2}+1, q^{(m-2)/2}+1)/2 = (q^{(m+2)/2}+1)(q^{(m-2)/2}+1)/4,$$ 
fixing a decomposition $(m+2)^{-} \perp (m-2)^{-} \perp 1$. We claim that 
$|x|$ is not a prime power. This is clear if $m \geqs 6$, or if $m=4$ and $q$ is not a Mersenne prime, since $|x|$ is divisible by $\ell_{m \pm 2}(q)$. Suppose that $m=4$ and $q$ is a Mersenne prime. If $q=3$ then $|x|=28$ and the claim holds, and if $q>3$ then $|x|$ is divisible by $2$ and $\ell_6(q)$. This justifies the claim.
Using \cite{GPPS} one can check that the only maximal subgroups of $G$ containing $x$ are of type ${\rm N}_{m+2}^{-}$, ${\rm N}_{m-2}^{-}$ or ${\rm N}_{2m}^{+}$, so we may assume that $H$ is one of these subgroups. Here we observe that elements of order $\ell_{n-1}(q)$ are derangements, and so are unipotent elements with Jordan form $[J_n]$.
\end{proof}

\vs

This completes the proof of Theorem \ref{t:main2}.

\section{Affine groups}\label{s:affine}

Let $G$ be a finite primitive permutation group. By Theorem \ref{t:main1}, if \eqref{e:star} holds then $G$ is either almost simple or affine. In the previous section, we determined all the almost simple examples, and we now turn our attention to the affine groups with property \eqref{e:star}. Our main aim is to prove Theorem \ref{t:main3}.  

Let $G = HV \leqs {\rm AGL}(V)$ be a finite affine primitive permutation group with point stabilizer $H = G_0$ and socle $V = (\mathbb{Z}_{p})^k$. As an abstract group, $G$ is a semidirect product of $V$ by $H$. Therefore, we will begin our analysis by studying the structure of a general semidirect product $G=H \ltimes N$ with property \eqref{e:star}, so $G$ is a finite group, $H$ is a proper subgroup and $N$ is a normal subgroup of $G$ such that $G = HN$ and $H \cap N = 1$. 

We will need some additional notation. If $K$ is a subgroup of $G$ and $g\in G$, then we set
$$[K,g] = \{ [k,g]=k^{-1}g^{-1}kg \,:\, k \in K\}.$$ 
We also write $K^*$ for the set of all nontrivial elements of $K$.

\begin{lemma}\label{lem:general facts}
Let $G=H \ltimes N$. The following hold:
\begin{itemize}\addtolength{\itemsep}{0.2\baselineskip}
\item[{\rm (i)}] $\Centralizer_G(x)=\Centralizer_H(x)\Centralizer_N(x)$ for all $x\in H$.
\item[{\rm (ii)}] If $K\leqs H$, then $K\cap K^n=\Centralizer_K(n)$ for all $n \in N^*$.
\item[{\rm (iii)}] If $N$ is abelian, then $\Delta_H(G)=\{tv \, : \, t\in H,v\in N\setminus [N,t]\}$.
\item[{\rm (iv)}] If property \eqref{e:star} holds, then $N$ is an $r$-group.
\end{itemize} 
\end{lemma}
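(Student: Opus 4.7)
The overall plan is that all four parts follow from direct bookkeeping in the semidirect product: every $g \in G$ has a unique decomposition $g = hn$ with $h \in H$, $n \in N$, because $G = HN$ and $H \cap N = 1$. Combined with the normality of $N$, this repeatedly forces any ``$N$-component'' appearing in a conjugation computation to lie in $H \cap N = 1$. I would organize the proof as (i), (ii), (iv), (iii), treating (iii) last since it is the most computational.

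For (i), I would fix $x \in H$ and write $g \in \Centralizer_G(x)$ uniquely as $g = hn$. Conjugating: $x^g = (x^h)^n$, and since $N$ is normal, $(x^h)^n = x^h \cdot m$ for some $m \in N$. The equation $x^h \cdot m = x$ then forces $m \in H \cap N = 1$, so $h \in \Centralizer_H(x)$ and $n \in \Centralizer_N(x)$. The reverse containment is immediate. Part (ii) is analogous: for $k \in K \cap K^n$, writing $n^{-1} k n = k \cdot (k^{-1} n^{-1} k n)$, the right factor lies in $N$ by normality and in $K \subseteq H$ by hypothesis, hence equals $1$; this forces $k \in \Centralizer_K(n)$, and conversely.

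Part (iv) is the shortest. Since $N$ is normal, for every $g \in G$ we have $H^g \cap N = (H \cap N)^g = 1$, so every nontrivial element of $N$ lies in $\Delta_H(G)$. Under hypothesis \eqref{e:star} each such element is an $r$-element, and therefore $N$ is an $r$-group.

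The only slightly delicate step is (iii). First I would reduce from $G$-conjugates of $H$ to $N$-conjugates by noting $H^{hn} = H^n$ for $h \in H$, $n \in N$, so
\[
\bigcup_{g \in G} H^g = \bigcup_{n \in N} H^n.
\]
Next, writing a general element as $tv$ with $t \in H$, $v \in N$ (unique), and using the fact that $N$ is abelian and normal, a direct commutator computation gives
\[
n(tv)n^{-1} = (ntn^{-1})v = t \cdot [t, n^{-1}] \cdot v,
\]
where $[t, n^{-1}] \in N$. Since $t \in H$ and $[t, n^{-1}] v \in N$, membership of this product in $H$ is equivalent to $v = [t, n^{-1}]^{-1}$. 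As $n$ ranges over $N$, the element $[t, n^{-1}]^{-1} = [n^{-1}, t]$ ranges exactly over $[N, t]$ in the sense defined before the lemma. Hence $tv$ is $G$-conjugate to an element of $H$ if and only if $v \in [N, t]$, which is precisely (iii). The main care point throughout is keeping track of the direction of commutators so that the index set $[N, t]$ matches the paper's convention, but no real obstacle arises.
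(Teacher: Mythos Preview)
Your proposal is correct and follows essentially the same approach as the paper: each part is handled by writing elements uniquely as $hn$ with $h\in H$, $n\in N$, and using $H\cap N=1$ together with normality of $N$ to force the relevant commutator or $N$-component to vanish. The only cosmetic difference is that the paper proves (iii) as two separate inclusions rather than a single if-and-only-if computation, and (as you already note) you must be careful in (ii) that with the paper's convention $k^n=n^{-1}kn$ the hypothesis $k\in K^n$ gives $nkn^{-1}\in K$, so the identity to use is $nkn^{-1}=k\cdot(k^{-1}nkn^{-1})$.
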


\begin{proof} 
First consider part (i). The result is clear if $x=1$, so assume that $x\in H^*$. The inclusion 
$\Centralizer_H(x)\Centralizer_N(x) \subseteq \Centralizer_G(x)$ is clear. Conversely, suppose that $g=hn\in\Centralizer_G(x)$ where $h\in H$, $n\in N$. Then $hnx=xhn$. Multiplying both sides by $(xh)^{-1}=h^{-1}x^{-1}$, we obtain 
\[hnxh^{-1}x^{-1}=(xh)n(xh)^{-1}\]
which implies that 
\[(hnh^{-1})(hxh^{-1}x^{-1})=(xh)n(xh)^{-1}.\] 
Since $n\in N\normeq G$ and $h,x\in H$, we deduce that 
$$hxh^{-1}x^{-1}=(hn^{-1}h^{-1})(xh)n(xh)^{-1}\in H\cap N=1$$ 
so $h\in \Centralizer_H(x)$. Since $hnx=xhn=hxn$, we deduce that $nx=xn$ and thus $n\in\Centralizer_N(x)$. Therefore, $g=hn\in\Centralizer_H(x)\Centralizer_N(x)$ and part (i) follows.

For part (ii), let $K\leqs H$ and let $n\in N^*$. Assume that $y\in K\cap K^n$. Then $y=k^n\in K$ for some $k\in K$, so 
$$k^{-1}y=k^{-1}n^{-1}kn=(k^{-1}n^{-1}k)n\in K\cap N=1,$$ 
which implies that $kn=nk$ and $y=k$, or equivalently $y \in \Centralizer_K(n)$. Therefore, 
$K\cap K^n \leqs \Centralizer_K(n)$. Conversely, if $y\in \Centralizer_K(n)$ then $y\in K$ and $y=n^{-1}yn\in K^n$, so $y\in K\cap K^n$ and thus $\Centralizer_K(n)\leqs K\cap K^n$. The result follows.

Now consider part (iii). Assume that $N$ is abelian. Set
$$\Gamma:= \{tv \, : \, t\in H,v\in N\setminus [N,t]\}.$$ 
First we claim that $\Gamma \subseteq \Delta_H(G)$. Let $g\in \Gamma$, say $g=hn$ with $h\in H$ and $n\in N\setminus [N,h]$. Seeking a contradiction, suppose that $g \not\in \Delta_H(G)$. Then $g\in H^t$ for some $t\in G$. Since $t\in G = HN$, we may write $t=h_1m_1$ with $h_1\in H$ and $m_1\in N$. It follows that $g\in H^t=H^{m_1}$, so $m_1gm_1^{-1}\in H$. Let $m:=m_1^{-1}\in N$. Then $m^{-1}gm=m^{-1}hnm=h^mn\in H$ (note that $nm=mn$ since $N$ is abelian) and thus $h^{-1}h^mn=[h,m]n\in H$. We also have $[h,m]n=(h^{-1}m^{-1}h)mn\in N$, so $[h,m]n\in H\cap N=1$ and we deduce that $n=[m,h]\in [N,h]$, contradicting our choice of $n$. We have now shown that $\Gamma \subseteq \Delta_H(G)$. Conversely, suppose that 
$g=hn\in \Delta_H(G)$ with $h\in H$, $n\in N$. We claim that $n \in N\setminus [N,h]$. Seeking a contradiction, suppose that $n \in [N,h]$, say $n=[m,h]$ for some $m\in N$. Then 
$m^{-1}(hn)m=h$, or equivalently $g^m\in H$, which is a contradiction.

Finally, let us turn to part (iv). If $x\in N^*$ then $x^G\subset N$, so $x^G\cap H \subseteq N \cap H=1$ and thus $x^G\cap H=\emptyset$ since $x\neq 1$. Therefore $N^*\subseteq \Delta_H(G)$. In particular, if every element of $\Delta_H(G)$ is an $r$-element (for some fixed prime $r$), then every element of $N$ is also an $r$-element and thus $N$ is an $r$-group. 
\end{proof}

\begin{lemma}\label{lem:equivalence}
Let $G=H\ltimes N$, where $N$ is an $r$-group for some prime $r$. Then the following are equivalent:
\begin{itemize}\addtolength{\itemsep}{0.2\baselineskip}
\item[{\rm (i)}] Property \eqref{e:star} holds. 
\item[{\rm (ii)}] $\Centralizer_H(n)=H\cap H^n$ is an $r$-group for all $n\in N^*$.
\item[{\rm (iii)}]  $\Centralizer_N(x)=1$ for every nontrivial $r'$-element $x \in H$. In other 
words, every nontrivial $r'$-element of $H$ induces a fixed-point-free automorphism of $N$ via conjugation.
\end{itemize}
\end{lemma}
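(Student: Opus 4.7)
The plan is to establish the equivalence via the cycle (i) $\Rightarrow$ (ii) $\Rightarrow$ (iii) $\Rightarrow$ (i). First note that the equality $\Centralizer_H(n) = H \cap H^n$ asserted in (ii) is already furnished by Lemma \ref{lem:general facts}(ii) with $K = H$, so only the $r$-group assertion requires proof.

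For (i) $\Rightarrow$ (ii), I argue by contradiction. Given $n \in N^*$, suppose $\Centralizer_H(n)$ contains a nontrivial $r'$-element $x$. Since $x$ and $n$ commute and $(|x|, |n|) = 1$ (as $|n|$ is an $r$-power), the product $xn$ has order $|x||n|$ and is not an $r$-element. To conclude that $xn \in \Delta_H(G)$, I use that $N^* \subseteq \Delta_H(G)$ (the $G$-conjugacy class of a nontrivial element of $N$ lies in $N$, which meets $H$ trivially) together with the observation that $(xn)^{|x|} = n^{|x|} \in N^*$: if $xn$ were $G$-conjugate into $H$, then so would be $n^{|x|}$, which is impossible. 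Thus $xn \in \Delta_H(G)$ contradicts (i). The implication (ii) $\Rightarrow$ (iii) is then immediate: if $x \in H$ is a nontrivial $r'$-element with some $n \in \Centralizer_N(x)^*$, then $x \in \Centralizer_H(n)$ is forced by (ii) to be an $r$-element, which is absurd.

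The main step, and the main obstacle, is (iii) $\Rightarrow$ (i). Let $g = hn \in \Delta_H(G)$ with $h \in H$ and $n \in N$. Under the isomorphism $G/N \cong H$ sending $gN$ to $h$, the ratio $|g|/|h|$ equals $|\la g \ra \cap N|$, which is an $r$-power since $N$ is an $r$-group; consequently $g$ is an $r$-element if and only if $h$ is. Aiming at a contradiction, suppose $h$ is not an $r$-element, so that its $r'$-part $h_{r'} \in \la h \ra$ is nontrivial. Then (iii) gives $\Centralizer_N(h_{r'}) = 1$, and since $h_{r'}$ is a power of $h$ we have $\Centralizer_N(h) \leqs \Centralizer_N(h_{r'}) = 1$. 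Hence $h$ acts fixed-point-freely on $N$ by conjugation, and a standard Frobenius-style argument shows that the map $N \to N$, $t \mapsto [h,t]$, is injective (if $[h,t_1] = [h,t_2]$, then $t_2 t_1^{-1} \in \Centralizer_N(h) = 1$), hence bijective by finiteness of $N$. Choosing $t \in N$ with $[h,t] = n$ yields $t^{-1} h t = hn = g$, exhibiting $g$ as a $G$-conjugate of $h \in H$, contrary to $g \in \Delta_H(G)$. The key points of this last implication are extracting from (iii) that $h$ itself (and not merely its $r'$-part) acts fixed-point-freely on $N$, and then translating this fixed-point-free action into a concrete conjugation that moves $g$ back into $H$.
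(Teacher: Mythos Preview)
Your proof is correct. The implications (i)\,$\Rightarrow$\,(ii) and (ii)\,$\Rightarrow$\,(iii) are essentially the same as in the paper (the only cosmetic difference is that the paper takes $x$ of prime order $s\ne r$ rather than an arbitrary $r'$-element).

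The genuine difference lies in closing the cycle. The paper does not prove (iii)\,$\Rightarrow$\,(i) directly; instead it proves (iii)\,$\Rightarrow$\,(ii) trivially and then establishes (ii)\,$\Rightarrow$\,(i) via a Sylow argument: given $g\in\Delta_H(G)$ of order divisible by a prime $s\ne r$, one conjugates so that the power $x=g^{|g|/s}$ lies in $H$, and then uses the factorisation $\Centralizer_G(x)=\Centralizer_H(x)\Centralizer_N(x)$ from Lemma~\ref{lem:general facts}(i) together with (ii) to force $g\in\Centralizer_G(x)=\Centralizer_H(x)\leqslant H$, a contradiction. Your route is more elementary: from (iii) you deduce $\Centralizer_N(h)=1$ whenever $h$ is not an $r$-element, and then the standard observation that the map $t\mapsto[h,t]$ on $N$ is injective (hence surjective) lets you write $n=[h,t]$ and conjugate $g=hn$ back into $H$. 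This avoids Sylow's theorem and the centraliser decomposition entirely, and does not require $N$ to be abelian (so it is slightly sharper than simply invoking Lemma~\ref{lem:general facts}(iii)). The paper's argument, on the other hand, makes the role of the two-point stabilisers $H\cap H^n$ more visible, which is what is used downstream in Remark~\ref{r:affine0}.
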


\begin{proof}
First we will show that (i) implies (ii). Suppose that \eqref{e:star} holds. Let $n\in N^*$. We claim that $\Centralizer_H(n)$ is an $r$-group. Notice that $\Centralizer_H(n)=H\cap H^n$ by Lemma \ref{lem:general facts}(ii). Seeking a contradiction, suppose that $|\Centralizer_H(n)|$ is divisible by a prime $s \neq r$. Choose $y\in \Centralizer_H(n)$ with $|y|=s$ and let $g:=ny=yn\in G$. We claim that $g\in\Delta_H(G)$, which would be a contradiction since $|g|=|n|s$ is not a power of $r$. Assume that $g \not\in\Delta_H(G)$, so $g\in H^t$ for some $t\in G$. Since $G=HN$, we may assume that $t \in N$. Then 
$$g^s=(ny)^s=n^sy^s=n^s\in H^t$$ 
and $n^s\in N\normeq G$, so $t(n^s)t^{-1}\in H\cap N=1$ and thus $n^s=1$, which is not possible since $n$ is a nontrivial $r$-element. Therefore, $g=ny\in\Delta_H(G)$ as required. 

Next we will show that (ii) implies (i). Suppose that  $\Centralizer_H(n)$ is an $r$-group for all  $n\in N^*$. Let $g \in \Delta_H(G)$, say $g=hn$ with $h\in H$ and $n\in N^*$. We claim that $g$ is an $r$-element. Seeking a contradiction, suppose that $m:=|g|$ is divisible by a prime $s \neq r$. Set $x:=g^{m/s} \in G$ and let $S$ be a Sylow $s$-subgroup of $H$.  Then $|x|=s$ and $S$ is also a Sylow $s$-subgroup of $G$ since $|G:H|=|N|$ is coprime to $s$. By Sylow's theorem, $x^t \in S\leqs H$ for some $t \in G$. Since $g^G \subseteq \Delta_H(G)$, replacing $g$ by $g^{t^{-1}}$ we may assume that $x \in H$. Then $g \in \Centralizer_G(x)=\Centralizer_H(x)\Centralizer_N(x)$ by Lemma \ref{lem:general facts}(i). 

Suppose that $\Centralizer_N(x) \neq 1$, say $1 \neq n \in \Centralizer_N(x)$. Then 
$x \in \Centralizer_H(n)$, but this is a contradiction since $|x|=s$ and we are assuming that $\Centralizer_H(n)$ is an $r$-group. Therefore, $\Centralizer_N(x)=1$ and thus 
$\Centralizer_G(x)=\Centralizer_H(x)$. Hence $g\in \Centralizer_H(x)\leqs H$, which contradicts the fact that $g\in \Delta_H(G)$. This final contradiction shows that $g$ is an $r$-element, so \eqref{e:star} holds.

Now let us show that (ii) implies (iii). Suppose that $\Centralizer_H(n)$ is an $r$-group for all 
$n\in N^*$. Let $x\in H^*$ be an $r'$-element. We claim that $\Centralizer_N(x)=1$. Seeking a  contradiction, suppose that $1 \neq n \in \Centralizer_N(x)$. Then 
$x\in \Centralizer_H(n)$, so $|\Centralizer_H(n)|$ is divisible by $|x|$, which is not an $r$-power. This contradicts the assumption that $\Centralizer_H(n)$ is an $r$-group.

To complete the proof, it remains to show that (iii) implies (ii). Suppose that 
$\Centralizer_N(x)=1$ for every nontrivial $r'$-element $x \in H$. Let $n \in N^*$. If $\Centralizer_H(n)$ is not an $r$-group, then there exists an element $x\in \Centralizer_H(n)$ with $|x|=s$, where $s \neq r$ is a prime. Therefore, $1\neq n \in \Centralizer_N(x)$, which is not possible since $\Centralizer_N(x)=1$. 
\end{proof}

We are now in a position to prove Theorem \ref{t:main3}. 

\begin{proof}[Proof of Theorem \ref{t:main3}]
Let $G = HV \leqs {\rm AGL}(V)$ be a finite affine primitive permutation group with point stabilizer $H=G_0$ and socle $V = (\Z_p)^k$, where $p$ is a prime and $k \geqs 1$. If property \eqref{e:star} holds, then $r=p$ and Lemma \ref{lem:equivalence} implies that no nontrivial $r'$-element of $H$ has fixed points on $V\setminus\{0\}$. Therefore, the pair $(H,V)$ is $r'$-semiregular in the sense of \cite{FLT}. Conversely, if $r=p$ and $(H,V)$ is $r'$-semiregular, then $\Centralizer_V(x)=0$ for every nontrivial $r^\prime$-element 
$x\in H$, so Lemma \ref{lem:equivalence} implies that $G$ has property \eqref{e:star}. \end{proof}

\begin{rem}\label{r:affine0}
\emph{Note that the equivalence of (i) and (ii) in Lemma \ref{lem:equivalence} implies that an affine group $G = HV \leqs {\rm AGL}(V)$ has property \eqref{e:star} if and only if every two-point stabilizer in $G$ is an $r$-group.}
\end{rem}

If $G = HV \leqs {\rm AGL}(V)$ is an affine group (with $V = (\Z_r)^k$) and $r \not\in \pi(H)$, then $G$ is a Frobenius group and property \eqref{e:star} clearly holds. Therefore, we may focus on the case where $r \in \pi(H)$. As noted in the Introduction, detailed information on $r'$-semiregular pairs $(H,V)$ was initially obtained by Guralnick and Wiegand in \cite[Section 4]{GW}, where this notion arises naturally in their study of the multiplicative structure of field extensions. Similar results were established in later work by Fleischmann et al. \cite{FLT}. In both papers, the main aim is to determine the structure of $H$. For solvable affine groups, we have the following result (in the statement, $\OO_{r'}(Y)$ denotes the largest normal $r'$-subgroup of $Y$):

\begin{proposition}\label{p:flt1}
Let $G = HV \leqs {\rm AGL}(V)$ be a finite affine primitive permutation group with point stabilizer $H = G_0$ and socle $V = (\Z_r)^k$. Assume that $H$ is solvable and $r \in \pi(H)$. Then $G$ has property \eqref{e:star} only if $H  \cong X \times Y$ or $(X \times Y){:}2$, where $X \in \{1,{\rm SL}_{2}(3)\}$, $Y = \OO_{r'}(Y)R$ and $R$ is a Sylow $r$-subgroup of $Y$.
\end{proposition}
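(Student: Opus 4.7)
The plan is to translate property \eqref{e:star} into a structural condition via Theorem \ref{t:main3}, reduce to the known structure of solvable Frobenius complements, and then carefully analyse the interaction with a Sylow $r$-subgroup. By Theorem \ref{t:main3} and Remark \ref{r:affine0}, property \eqref{e:star} is equivalent to the pair $(H,V)$ being $r'$-semiregular: every nontrivial $r'$-element of $H$ acts without fixed points on $V\setminus\{0\}$. Since $H$ is solvable, Hall's theorem provides a Hall $r'$-subgroup $K\leqs H$, and $r'$-semiregularity then says precisely that $K\ltimes V$ is a Frobenius group with complement $K$. Thus $K$ is a solvable Frobenius complement, and the hypothesis $r\in\pi(H)$ ensures that the Sylow $r$-subgroup is a genuine extra ingredient to be accounted for.

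Next I would invoke the classical Zassenhaus structure theorem for solvable Frobenius complements (as developed in detail in \cite[Section 4]{GW}): every Sylow subgroup of $K$ is cyclic or generalized quaternion, and $K$ has a normal subgroup of index at most $2$ that decomposes as a direct product $A\times B$, with $A\in\{1,\SL_2(3)\}$ and $B$ a $Z$-group (all Sylow subgroups cyclic) of order coprime to $|A|$. This identifies the factor $X\in\{1,\SL_2(3)\}$ appearing in the statement, and $B$ becomes the natural candidate for $\OO_{r'}(Y)$.

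To complete the proof I would bring in a Sylow $r$-subgroup $R$ of $H$, so that $H=KR$, and analyse its conjugation action on $K$. Using Lemma \ref{lem:equivalence}(iii), every element of $R$ must preserve the fixed-point-free $K$-action on $V$, which places severe restrictions on the induced $R$-automorphisms of $K$: any $z\in R$ must send nontrivial $r'$-elements of $K$ to nontrivial $r'$-elements that remain fixed-point-free on $V$, so $R$-conjugation respects the Frobenius complement structure. A direct analysis, essentially that of \cite[Section 4]{GW} and \cite{FLT}, then shows that $R$ must centralize the $\SL_2(3)$-factor $A$ (any nontrivial outer action would produce an $r'$-element of $H$ with a nontrivial fixed subspace on $V$, contradicting Lemma \ref{lem:equivalence}(iii)), and that $R$ normalizes $B$ in such a way that the group $Y:=BR$ satisfies $Y=\OO_{r'}(Y)R$ with $\OO_{r'}(Y)=B$. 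The index-$2$ extension in the conclusion corresponds precisely to the index-$1$-or-$2$ piece already present in Zassenhaus' description of $K$.

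The main obstacle is this last step: rigorously excluding every nontrivial outer action of $R$ on the $\SL_2(3)$-factor and producing the clean direct-product splitting $H=X\times Y$. This requires combining the abstract Frobenius complement structure of $K$ with the faithful $H$-module $V$ by a fusion- or Clifford-theoretic argument, and it is the technical heart of the structural classification of $r'$-semiregular pairs carried out in \cite{GW} and \cite{FLT}; once this constraint on the $R$-action is handled, the claimed form of $H$ follows by reassembling the pieces.
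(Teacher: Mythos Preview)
The paper's own proof is a single sentence: ``This follows from \cite[Theorem 2.1]{FLT}.'' The proposition is quoted as a black-box consequence of the Fleischmann--Lempken--Tiep classification, with no independent argument.

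Your outline has a concrete gap that appears \emph{before} the step you flag as the main obstacle. You propose to read off $X$ and $\OO_{r'}(Y)$ from the Zassenhaus decomposition $K_0=A\times B$ of (an index-$\leqs 2$ normal subgroup of) the Hall $r'$-subgroup $K$, and then to show that a Sylow $r$-subgroup $R$ centralizes $A$ and normalizes $B$, so that $Y:=BR$. This already fails in the smallest relevant example: take $r=3$, $H=\SL_2(3)$, $V=\F_3^{\,2}$, so $G={\rm ASL}_2(3)$, which the Introduction notes does satisfy \eqref{e:star}. Here $K=Q_8$, and your Zassenhaus step forces $K_0\cong\Z_4$, $A=1$, $B\cong\Z_4$. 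But a Sylow $3$-subgroup $R\cong\Z_3$ of $\SL_2(3)$ permutes the three cyclic subgroups of order $4$ in $Q_8$ transitively, so $R$ does \emph{not} normalize $B$ and your $Y=BR$ is not even a subgroup. The correct decomposition in the conclusion is $X=1$, $Y=\SL_2(3)$ with $\OO_{3'}(Y)=Q_8$ (equivalently $X=\SL_2(3)$, $Y=1$); neither arises from your procedure. More generally, when $r\in\{2,3\}$ the factor $\SL_2(3)$ cannot sit inside the Hall $r'$-subgroup at all, so it cannot be the $A$ in your Zassenhaus decomposition of $K$; the actual proofs in \cite{GW} and \cite{FLT} do not first strip off a Hall $r'$-subgroup but analyse $H$ and its action on $V$ globally, with separate arguments for the primes $2$ and $3$. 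Since you ultimately defer the substantive work to those same references, your proposal is better viewed as a partial unpacking of \cite{FLT} than as an alternative proof.
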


\begin{proof}
This follows from \cite[Theorem 2.1]{FLT}.
\end{proof}

The main result for a perfect group $H$ is Proposition \ref{p:flt2} below (see \cite[Theorem 4.1]{FLT}; also see \cite[Theorem 4.2]{GW}). In part (iv), $\mathcal{S} = \{5,13,37,73, \ldots\}$ is the set of all primes $s$ satisfying the following conditions:
\begin{itemize}\addtolength{\itemsep}{0.2\baselineskip}
\item[{\rm (a)}] $s = 2^a3^b+1$, where $a \geqs 2$ and $b \geqs 0$;
\item[{\rm (b)}] $(s+1)/2$ is a prime.
\end{itemize} 
It is not known whether or not $\mathcal{S}$ is finite.

\begin{proposition}\label{p:flt2}
Let $G= HV \leqs {\rm AGL}(V)$ be a finite affine primitive permutation group with point stabilizer $H=G_0$ and socle $V = (\Z_r)^k$. Assume that $H$ is perfect and $r \in \pi(H)$. Then $G$ has property \eqref{e:star} only if one of the following holds:
\begin{itemize}\addtolength{\itemsep}{0.2\baselineskip}
\item[{\rm (i)}] $H  \cong {\rm SL}_{2}(r^a)$, where $a \geqs 1$ and $r^a>3$;
\item[{\rm (ii)}] $H \cong {}^2{\rm B}_2(2^{2a+1})$, $r=2$ and $a \geqs 1$;
\item[{\rm (iii)}] $H \cong {}^2{\rm B}_2(2^{2a+1}) \times {\rm SL}_{2}(2^{2b+1})$, $r=2$ and $a,b \geqs 1$ such that $(2a+1,2b+1) = 1$;
\item[{\rm (iv)}] $H \cong {\rm SL}_{2}(s)$, $r=3$ and $s \in \mathcal{S} \cup \{7,17\}$.
\end{itemize}
\end{proposition}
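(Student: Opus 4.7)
The strategy is to reduce to the classification of $r'$-semiregular pairs with perfect acting group, which has already been carried out by Fleischmann, Lempken and Tiep. By Theorem \ref{t:main3}, property \eqref{e:star} is equivalent to the condition that $(H,V)$ be an $r'$-semiregular pair in the sense of \cite{FLT}; equivalently, by Lemma \ref{lem:equivalence}(iii), every nontrivial $r'$-element of $H$ acts on $V$ with no nonzero fixed vector, i.e.\ admits no eigenvalue $1$ over $\overline{\F}_r$. This translation is the only new ingredient beyond what is already in the literature.

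The plan is then to invoke \cite[Theorem 4.1]{FLT} directly. Before doing so, I would record the standing setup: since $G=HV$ is primitive affine, $H$ acts faithfully and irreducibly on $V$; since $H$ is perfect with $r\in\pi(H)$, the generalized Fitting subgroup $F^*(H)$ has a nontrivial layer $E(H)$, whose quasi-simple components must each act $r'$-semiregularly on the appropriate homogeneous summand of $V$. The $r'$-semiregular hypothesis is extremely restrictive, because a typical quasi-simple group of Lie type admits abundant semisimple elements with eigenvalue $1$ on any faithful irreducible module of moderate dimension. Running through Aschbacher's structure theorem and applying explicit eigenvalue considerations for each candidate component eliminates all but the four families listed in (i)--(iv). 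This case analysis is precisely what is executed in \cite[Theorem 4.1]{FLT} (compare also \cite[Theorem 4.2]{GW}).

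The main obstacle, which makes a self-contained argument impractical within the scope of this paper, is case (iv): here $H\cong\SL_2(s)$ acts faithfully and irreducibly over $\F_3$, and one must enforce that no nontrivial semisimple element of $\SL_2(s)$ has eigenvalue $1$ on the module. This forces the orders of the cyclic tori of $\SL_2(s)$ (namely $(s\pm 1)/2$ and $s$) into very specific arithmetic shapes, producing the Diophantine conditions $s=2^a 3^b + 1$ with $(s+1)/2$ prime that define the set $\mathcal{S}$; the exceptional values $s\in\{7,17\}$ correspond to sporadic low-rank coincidences. The proposed proof is therefore a direct citation of \cite[Theorem 4.1]{FLT}, the only novelty being the bridge provided by Theorem \ref{t:main3} between property \eqref{e:star} and the $r'$-semiregular framework.
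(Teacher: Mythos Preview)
Your proposal is correct and matches the paper's own treatment: the paper does not give an independent proof of this proposition but simply states it with the attribution ``see \cite[Theorem 4.1]{FLT}; also see \cite[Theorem 4.2]{GW}'', relying on Theorem~\ref{t:main3} (already proved) for the translation from property~\eqref{e:star} to the $r'$-semiregular condition. Your write-up makes this bridge explicit and adds some helpful commentary on why the classification in \cite{FLT} is delicate, but the substance is identical.
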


For instance, $H = {\rm SL}_{2}(7)$ has a $12$-dimensional faithful, irreducible module $V$ over $\F_{3}$, and the corresponding affine group $G = HV$ has property \eqref{e:star} (with $\Ed(G) = \{3,9\}$). In the general case, we refer the reader to \cite[Theorem 6.1]{FLT} for a detailed description of the structure of $H$.

Finally, let us suppose that $G= HV \leqs {\rm AGL}(V)$ is a finite affine primitive permutation group with property \eqref{e:star}. Set
$$\Ed(G)=\Ed_H(G)=\{|x|\,:\,x\in\Delta_H(G)\}.$$
Can we determine when $\Ed(G) = \{r\}$? In order to address this question, let $P$ be a Sylow $r$-subgroup of $G$. Then $V\leqs P$ since $V$ is a normal $r$-subgroup of $G$, and we have $P=(H\cap P)V=KV$ with $K:=H\cap P$. Note that $P=KV$ is a semidirect product.

\begin{proposition}\label{p:prime}
Let $G = HV \leqs {\rm AGL}(V)$ be a finite affine primitive permutation group with point stabilizer $H = G_0$ and socle $V = (\Z_r)^k$. Assume that property \eqref{e:star} holds. Let $P$ be a Sylow $r$-subgroup of $G$ and set $K = H \cap P$. Then the following hold:
\begin{itemize}\addtolength{\itemsep}{0.2\baselineskip}
\item[{\rm (i)}]  $P=KV$ is a transitive permutation group on $P/K$.
\item[{\rm (ii)}] $\Delta(G)=\bigcup_{g\in G}\Delta_K(P)^g$ and $\Ed(G)=\Ed_K(P)$.
\end{itemize} 
\end{proposition}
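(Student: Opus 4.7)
I would prove part (i) first as a purely structural observation. Since $V$ is a normal $r$-subgroup of $G$ (by hypothesis the socle is $(\Z_r)^k$) and $P$ is a Sylow $r$-subgroup, we have $V\leqs P$. The modular law then gives
$$P = P\cap G = P\cap HV = (P\cap H)V = KV,$$
and since $K\cap V \leqs H\cap V = 1$ (by the semidirect decomposition $G = H\ltimes V$), we see that $P = K\ltimes V$ is itself a semidirect product. In particular $K$ is a proper subgroup of $P$ (as $V\neq 1$), and the right action of $P$ on the coset space $P/K$ is transitive, proving (i).

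For part (ii), the central observation is the set equality $\Delta_K(P) = \Delta(G)\cap P$. I would establish this by applying Lemma \ref{lem:general facts}(iii) twice — once to $G = H\ltimes V$ and once to $P = K\ltimes V$ — which gives the two parallel descriptions
\begin{align*}
\Delta(G) &= \{hv \,:\, h\in H,\; v\in V\setminus [V,h]\}, \\
\Delta_K(P) &= \{kv \,:\, k\in K,\; v\in V\setminus [V,k]\}.
\end{align*}
Any $y\in P$ has a unique decomposition $y = kv$ with $k\in K$ and $v\in V$ (from $P = K\ltimes V$); this is simultaneously its unique decomposition in $G = H\ltimes V$, since $K\leqs H$. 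Thus comparing the two characterizations, $y\in\Delta_K(P) \iff v\notin [V,k] \iff y\in\Delta(G)$, giving $\Delta_K(P) = \Delta(G)\cap P$.

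To complete (ii), I would now invoke property \eqref{e:star}: every element of $\Delta(G)$ is an $r$-element and therefore lies in some Sylow $r$-subgroup, i.e.\ in some conjugate $P^g$. Combining this with the fact that $\Delta(G)$ is a union of $G$-conjugacy classes, we obtain
$$\Delta(G) = \bigcup_{g\in G}\bigl(\Delta(G)\cap P^g\bigr) = \bigcup_{g\in G}\bigl(\Delta(G)\cap P\bigr)^g = \bigcup_{g\in G}\Delta_K(P)^g,$$
and the equality $\Ed(G) = \Ed_K(P)$ is then immediate since conjugation preserves element orders.

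There is no serious obstacle: the argument is a clean packaging of the commutator description from Lemma \ref{lem:general facts}(iii) together with the Sylow-theoretic fact that $r$-elements lie in Sylow $r$-subgroups. The only points requiring genuine care are (a) verifying that $P = K\ltimes V$ is a semidirect product (so that Lemma \ref{lem:general facts}(iii) applies to $P$), and (b) using property \eqref{e:star} at exactly the right moment to ensure $\Delta(G)\subseteq \bigcup_g P^g$; without \eqref{e:star} this last containment would fail and the proposition would not hold.
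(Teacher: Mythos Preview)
Your proof of part (ii) is correct and essentially the same as the paper's, packaged a bit more cleanly via the intermediate equality $\Delta_K(P) = \Delta(G)\cap P$.

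There is, however, a gap in your treatment of part (i). The assertion that $P$ is a \emph{transitive permutation group} on $P/K$ means that the action of $P$ on $P/K$ is faithful, i.e., that the core of $K$ in $P$ is trivial. Transitivity of a group on its own coset space is automatic and carries no content; the substance of (i) is core-freeness. The paper proves this by letting $L$ be the core of $K$ in $P$, observing that $[L,V]\leqs L\cap V\leqs K\cap V=1$ so that $L\leqs \Centralizer_K(V)\leqs \Centralizer_H(V)$, and then invoking primitivity of $G$ (namely that $V$ is a faithful irreducible $H$-module) to conclude $\Centralizer_H(V)=1$. Your argument for (i) never uses primitivity of $G$, which is a tell that something is missing: without that hypothesis one cannot rule out a nontrivial normal subgroup of $P$ lying inside $K$, and then $P$ would not embed in $\Sym(P/K)$.
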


\begin{proof} 
As above, $P=KV$ is a semidirect product. For part (i), it suffices to show that the core $L$ of $K$ in $P$ is trivial. We have $L\leqs K\leqs H$ and $L\normeq P$, so $[L,V]\leqs L\cap V\leqs K\cap V=1$ and thus $L\leqs \Centralizer_K(V)\leqs \Centralizer_H(V)=1$ (here we are using the fact that $V$ is a faithful irreducible $H$-module). This proves (i).

Now consider part (ii). Clearly, it suffices to show that the first equality holds. By applying Lemma \ref{lem:general facts}(iii) we have 
\[\Delta_K(P)=\{tv \, : \, t\in K,v\in V\setminus [V,t]\}.\] 
Since $K\leqs H$, a further application of Lemma \ref{lem:general facts}(iii) (this time for $G=HV$) shows that $\Delta_K(P)\subseteq \Delta(G)$. As $\Delta(G)$ is a normal subset of $G$, it follows that 
\[\bigcup_{g\in G}\Delta_K(P)^g\subseteq\Delta(G).\] 

Since property \eqref{e:star} holds, every $g\in \Delta(G)$ is an $r$-element, so some 
$G$-conjugate of $g$ is in $P$. Without loss of generality, we may assume that $g\in P=KV$. By Lemma \ref{lem:general facts}(iii) we have $g=hn$, with $h\in H$ and $n\in V\setminus [V,h]$. Moreover, since $V\leqs P$ and $g\in P$, we have $h=gn^{-1}\in H\cap P=K$. Therefore, by applying Lemma \ref{lem:general facts}(iii) once again, we conclude that $g=hn \in \Delta_K(P)$, so $\Delta(G)=\bigcup_{g\in G}\Delta_K(P)^g$ and the proof is complete. 
\end{proof}

Now, if we assume that $G=HV$ has property \eqref{e:star}, then part (ii) of Proposition \ref{p:prime} implies that $\Ed(G)=\{r\}$ if and only if $\Ed_K(P)=\{r\}$. Clearly, if $P$ has exponent $r$, then $\Ed_K(P)=\{r\}$. Conversely, if $\Ed_K(P)=\{r\}$ with $r=2$ or $3$, then a theorem of Mann and Praeger \cite[Proposition 2]{MP} implies that $P$ has exponent $r$. 
In fact, for this specific transitive group $P$ we can show that the same conclusion holds for \emph{any} prime $r$ (we thank an anonymous referee for pointing this out).

\begin{theorem}\label{c:prime}
Let $G = HV \leqs {\rm AGL}(V)$ be a finite affine primitive permutation group with point stabilizer $H = G_0$ and socle $V = (\mathbb{Z}_{p})^k$, where $p$ is a prime and $k \geqs 1$. Then every derangement in $G$ has order $r$, for some fixed prime $r$, if and only if $r=p$ and the following two conditions hold:
\begin{itemize}\addtolength{\itemsep}{0.2\baselineskip}
\item[{\rm (i)}] Every two-point stabilizer in $G$ is an $r$-group;
\item[{\rm (ii)}] A Sylow $r$-subgroup of $G$ has exponent $r$.
\end{itemize}
\end{theorem}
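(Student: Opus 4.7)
The \emph{if} direction is immediate: assuming $r = p$, (i), and (ii), Remark \ref{r:affine0} (equivalently, the equivalence of (i) and (ii) in Lemma \ref{lem:equivalence}) gives property \eqref{e:star}, so every derangement in $G$ is an $r$-element, and then (ii) together with Sylow's theorem forces every nontrivial $r$-element of $G$ to have order exactly $r$. For the \emph{only if} direction, Theorem \ref{t:main3} and Remark \ref{r:affine0} instantly yield $r = p$ and (i) from the hypothesis $\Ed(G) = \{r\}$, so the entire content of the theorem reduces to proving (ii).

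To prove (ii), I would let $P$ be a Sylow $r$-subgroup of $G$ and set $K = H \cap P$. Proposition \ref{p:prime} provides the semidirect decomposition $P = K \ltimes V$ together with the equality $\Ed_K(P) = \Ed(G) = \{r\}$. Viewing $V = \F_r^k$ as an $\F_r K$-module, each $t \in K$ acts as a linear operator on $V$, and by Lemma \ref{lem:general facts}(iii) the derangement set has the explicit description $\Delta_K(P) = \{tv : t \in K,\ v \in V \setminus (t-1)V\}$, where $(t-1)V$ is the image of the operator $t-1$. A direct computation in the semidirect product, combined with the $\F_r$-polynomial identity $1 + t + \cdots + t^{r-1} = (t-1)^{r-1}$ (valid because $(t-1)^r = t^r - 1$ in characteristic $r$), yields the power formula
\[
(tv)^r \;=\; t^r \cdot (t-1)^{r-1} v.
\]
Consequently $(tv)^r = 1$ if and only if $t^r = 1$ in $K$ (using the faithfulness of $K$ on $V$) and $v \in \ker (t-1)^{r-1}$.

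The crux is then a short union-of-subspaces argument. Fix $t \in K$ with $t \neq 1$. Every $v \in V \setminus (t-1)V$ produces a derangement $tv$, and by hypothesis $tv$ has order $r$, so $t^r = 1$ and $(t-1)^{r-1} v = 0$. Therefore
\[
V \;=\; (t-1)V \;\cup\; \ker (t-1)^{r-1},
\]
expressing $V$ as the union of two $\F_r$-subspaces, and one of them must equal $V$. The image $(t-1)V$ cannot be all of $V$, for $t$ is a nontrivial $r$-element, so $t - 1$ is nilpotent on the $\F_r$-space $V$ and hence not surjective. Thus $\ker(t-1)^{r-1} = V$, i.e., $(t-1)^{r-1} = 0$ on $V$; combined with $t^r = 1$, this forces $(tv)^r = 1$ for every $v \in V$. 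Since the case $t = 1$ is trivial, every element of $P$ has $r$th power equal to the identity, so $P$ has exponent $r$ as required.

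The main obstacle is pinning down the correct $r$th-power formula in the semidirect product $P = K \ltimes V$; once $(tv)^r = t^r (t-1)^{r-1} v$ is in hand, the argument is elementary linear algebra over $\F_r$, resting only on the standard fact that no vector space is the union of two proper subspaces.
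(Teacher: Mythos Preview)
Your proof is correct, but the route you take for the \emph{only if} direction differs from the paper's. The paper argues by contradiction in two lines: if some $x \in P$ has order $r^2$, then $x$ is not a derangement (since $\Ed(G)=\{r\}$), so after conjugating we may take $x \in H$; choosing $v \in V \setminus [V,x]$ (nonempty because $x-1$ is nilpotent on $V$) gives a derangement $xv$ by Lemma~\ref{lem:general facts}(iii), and $|xv| \geqs |x| = r^2$ simply because the image of $xv$ in $G/V \cong H$ is $x$. This avoids any explicit computation of $(tv)^r$.

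Your approach instead computes $(tv)^r = t^r\,(t-1)^{r-1}v$ via the characteristic-$r$ identity $1+X+\cdots+X^{r-1}=(X-1)^{r-1}$ and then runs a clean union-of-subspaces argument to force $(t-1)^{r-1}=0$. This is a bit longer but entirely self-contained and yields more: you obtain the explicit power formula and the stronger structural fact that every $t \in K$ acts on $V$ with $(t-1)^{r-1}=0$, i.e.\ all Jordan blocks of $t$ have size at most $r-1$. One small remark: where you invoke ``faithfulness of $K$ on $V$'' to split $t^r\cdot(t-1)^{r-1}v=1$ into $t^r=1$ and $(t-1)^{r-1}v=0$, what you are really using is just the semidirect decomposition $K \cap V = 1$; faithfulness is not needed there.
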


\begin{proof}
Let $P$ be a Sylow $r$-subgroup of $G$. First assume that $r=p$ and (i) and (ii) hold. By (i), the pair $(H,V)$ is $r'$-semiregular so Theorem \ref{t:main3} implies that property \eqref{e:star} holds. Therefore, $\Ed(G)=\Ed_K(P)$ by Proposition \ref{p:prime}(ii) (with $K = H \cap P$) and thus condition (ii) implies that $\Ed(G)=\{r\}$ as required.

Conversely, let us assume that $\Ed(G)=\{r\}$, so $r=p$ and property \eqref{e:star} holds. By Theorem \ref{t:main3}, every two-point stabilizer in $G$ is an $r$-group and so it remains to show that $P$ has exponent $r$. Seeking a contradiction, suppose that ${\rm exp}(P) \geqs r^2$. Note that $r$ divides $|H|$. Let $Q$ be a Sylow $r$-subgroup of $H$. Let $x \in P$ be an element of order $r^2$ and observe that $x$ belongs to a conjugate of $H$ (since  $\Ed(G)=\{r\}$), so ${\rm exp}(Q) \geqs r^2$. We may assume $x \in H$ and we choose an element $v \in V \setminus [V,x]$. Then $xv \in G$ is a derangement by Lemma \ref{lem:general facts}(iii), but $|xv| \geqs r^2$ so we have reached a contradiction.
\end{proof}

\end{document}